\newtheorem{Theorem}{Theorem}[section]
\newtheorem{Lemma}{Lemma}[section]
\newtheorem{Proposition}{Proposition}[section]
\theoremstyle{definition}
\newtheorem{Definition}{Definition}[section]
\theoremstyle{remark}
\numberwithin{equation}{section}
\renewcommand{\u}{{\bf u}}
\renewcommand{\H}{{\bf H}}
\newcommand{\R}{{\mathbb R}}
\newcommand{\Dv}{{\rm div}}
\newcommand{\tr}{{\rm tr}}
\newcommand{\dl}{\delta}
\def\f{\frac}
\renewcommand{\O}{\Omega}
\def\D{\Delta }
\def\hf1{^\f{1}{1-\xi^2}}
\def\be{\begin{equation}}
\def\en{\end{equation}}
\def\bs{\begin{split}}
\def\es{\end{split}}
\newcommand{\PP}{\mathbb{P}}
\renewcommand{\v}{{\bf v}}
\newcommand{\supp}{{\rm supp}}
\author{Hengrong Du, \ Xianpeng Hu, \ Changyou Wang}
\address{Department of Mathematics, Purdue University, West Lafayette, IN, 47907, USA.} 
\email{du155@purdue.edu}
\address{Department of Mathematics, City University of Hong Kong, Hong Kong, PRC.} \email{xianpehu@cityu.edu.hk}
\address{Department of Mathematics, Purdue University, West Lafayette, IN, 47907, USA.} 
\email{wang2482@purdue.edu}
\title[Q-tensor]
{Suitable weak solutions for  the co-rotational Beris-Edwards system in dimension three}
\keywords{Beris-Edwards system, Landau-De Gennes potential, Ball-Majumdar potential}
\subjclass[2000]{35A05, 76A10, 76D03.}
\date{\today}
\begin{document}

\begin{abstract}
In this paper, we establish the global existence of a suitable weak solution
to the co-rotational Beris-Edwards $Q$-tensor system modeling the hydrodynamic motion of
nematic liquid crystals with either Landau-De Gennes bulk potential in $\mathbb R^3$ 
or Ball-Majumdar bulk potential in $\mathbb{T}^3$, a system coupling  the forced incompressible Navier-Stokes equation with
a dissipative, parabolic system of Q-tensor $Q$ in $\mathbb R^3$,
which is shown to be smooth away from a closed set $\Sigma$ 
whose $1$-dimensional parabolic Hausdorff measure is zero. 
\end{abstract}

\maketitle

\section{Introduction}

In this paper, we consider in dimension three the so-called Beris-Edwards system (\cite{BE} and \cite{DE})
that describes the hydrodynamic motion of nematic liquid crystals,
with either the Landau-De Gennes bulk potential function \cite{GP}
 or the Maire-Saupe (Ball-Majumdar) bulk potential function \cite{BM}. 
Roughly speaking, this is a system that couples a forced Navier-Stokes equation for the underlying
fluid velocity field $u$ with a dissipative parabolic system of $Q$-tensors modeling nematic liquid crystal director fields.
We are interested in establishing the existence of certain global weak solutions for such a Beris-Edwards system
that enjoys partial smoothness property, analogous to the celebrated works by Cafferalli-Kohn-Nirenberg \cite{CKN}
on the Navier-Stokes equation and Lin-Liu \cite{LL1} and \cite{LL2} on the simplified Ericksen-Leslie system modeling nematic liquid crystal flows with variable degree of orientations, which was proposed by Ericksen \cite{E1, E2} and Leslie \cite{Leslie} back in 1960's. 

We begin with the description of this system. Recall that the configuration space of $Q$-tensors is the set of traceless, symmetric $3\times 3$-matrices, defined by
$$\mathcal{S}_0^{(3)}=\Big\{Q\in \mathbb{R}^{3\times 3}: \ Q=Q^{\top}, \ {\rm{tr}} Q=0\Big\}.$$
For technical reasons, we will consider the one constant approximate form of the Landau-De Gennes energy functional of $Q$-tensors,  namely,
$$
E(Q)=\int_\Omega \big(\frac{L}2 |\nabla Q|^2+F_{\rm{bulk}}(Q)\big)\,dx,$$
on the Sobolev space $H^1(\Omega, \mathcal{S}_0^{(3)})$, where $\O$ is a three dimensional domain that is either
$\mathbb R^3$ or the torus $\mathbb{T}^3=\mathbb{R}^3/\mathbb{Z}^3$. Here $L>0$ denotes the elasticity constant, and
$F_{\rm{bulk}}(Q)$ denotes the bulk potential function that usually describes the phase transition among various phase states
including isotropic, uniaxial, or biaxial states. We refer the interested readers to Mottram-Newton 
\cite{MN} and Sonnet-Virga \cite{SV} for a more detailed discussion of general  Landau-De Gennes energy functionals involving multiple elasticity constants $L_i$'s.  In this paper, we will consider two
classes of bulk potential functions:
\begin{enumerate}
\item [(i)] (Landau-De Gennes bulk potential \cite{GP}). Here $F_{\rm{bulk}}(Q)=F_{\rm{LdG}}(Q)$, where 
\begin{equation}\label{de gennes1}
\displaystyle F_{\rm{LdG}}(Q)=\widehat{F}_{\rm{LdG}}(Q)-\min_{Q'\in\mathcal{S}_0^{(3)}}\widehat{F}_{\rm{LdG}}(Q'),
\end{equation}
and
\begin{equation}\label{de gennes2}
\widehat{F}_{\rm{LdG}}(Q)=\frac{a}{2} {\rm{tr}}(Q^2)-\frac{b}3 {\rm{tr}}(Q^3)+\frac{c}4 {\rm{tr}}^2(Q^2),
\end{equation}
where $a,b,c>0$ are temperature dependent material constants. It is a well known fact that if
$0<a<\frac{b^2}{27c}$, then
$\widehat{F}_{\rm{LdG}}$ reaches
the minimum when $Q=s_+(d\otimes d-\frac13I_3)$, where $s_+=\frac{b+\sqrt{b^2-24ac}}{4c}$ and $d\in\mathbb{S}^2$
is a unit vector field.
\item [(ii)] (Ball-Majumdar singular bulk potential \cite{BM}).  Here $F_{\rm{bulk}}(Q)=F_{\rm{BM}}(Q)$ is a modified Maire-Saupe bulk potential introduced by Ball-Majumdar \cite{BM}, which is defined as follows. $F_{\rm{BM}}(Q)=\nu G_{\rm{BM}}(Q)-\frac{\kappa}{2}|Q|^2$
for some $\nu>0$ and $\kappa>0$, and
 \begin{equation}\label{BMP}
 G_{\rm{BM}}(Q)\equiv
    \left\{
      \begin{array}{ll}\displaystyle
	\min_{\rho\in \mathcal{A}_{Q}}\int_{\mathbb{S}^{2}}\rho(p)\log \rho(p)\,d\sigma(p) & \text{if }-\frac{1}{3}<\lambda_j(Q)<\frac{2}{3}, \\
	\infty & \text{otherwise}, 
      \end{array}
      \right.
    \end{equation}
    $\lambda_j$ ($j=1,2,3$) denote the eigenvalues of $Q\in \mathcal{S}^{(3)}_0$, and 
    $$\mathcal{A}_Q\equiv\Big\{ \rho\in L^1(\mathbb{S}^{2},\mathbb{R}_+):\ \int_{\mathbb{S}^{2}}\rho(p)\,d\sigma(p)=1,\ 
    \int_{\mathbb{S}^{2}}\big( p\otimes p-\frac{1}{3}I_3 \big)\rho(p)\,d\sigma(p) =Q\Big\}.$$
    It was proven by \cite{BM} that $G_{\rm{BM}}$ is strictly convex and smooth in the interior of the convex set
    $$\mathcal{D}=\Big\{Q\in \mathcal{S}_0^{(3)}: \ -\frac13\le \lambda_i(Q)\le\frac23, \ i=1,2,3\Big\}.$$
\end{enumerate}

It is well-known that the first order variation of the Landau-De Gennes energy functional is given by
\begin{equation}\label{firstvar}
H=L\D Q-f_{\rm{bulk}}(Q), \  \ f_{\rm{bulk}}(Q)=\langle\nabla F_{\rm{bulk}}(Q)\rangle=\nabla F_{\rm{bulk}}(Q)
-\f{{\rm{tr}} (\nabla F_{\rm{bulk}}(Q))}{3}I_3.
\end{equation}
In particular, if $F_{\rm{bulk}}(Q)=F_{\rm{LdG}}(Q)$, then
$$
f_{\rm{bulk}}(Q)=\langle\nabla F_{\rm{LdG}}(Q)\rangle=aQ-b\big[Q^2-\f{\tr(Q^2)}{3}I_3\big]+cQ\tr(Q^2).$$

For $0<T\le\infty$, denote $Q_T=\Omega\times (0,T]$.  Let ${\bf u}: Q_T\mapsto\mathbb R^3$ denote the fluid velocity field and
$Q:Q_T\mapsto \mathcal{S}_0^{(3)}$ denote the director field. Define
\begin{equation*}
\begin{split}
S(\nabla\u, Q)=(\xi D+\omega)\big(Q+\f{1}{3}I_3\big)+\big(Q+\f{1}{3}I_3\big)(\xi D-\omega)-2\xi\big(Q+\f{1}{3}I_3\big)\tr(Q\nabla\u),
\end{split}
\end{equation*}
where $$D=\f12(\nabla\u+(\nabla\u)^\top) \ \ {\rm{and}}\ \ \omega=\f12(\nabla\u-(\nabla\u)^\top)$$
are the symmetric part and the antisymmetric part, respectively, of the velocity gradient tensor $\nabla\u$, 
and $\xi\in\mathbb{R}$ is a rotational parameter measuring the ratio between the aligning and tumbling effects to $Q$ by the fluid velocity field.

The Beris-Edwards $Q$-tensor system modeling the hydrodynamic motion of nematic liquid crystals reads \cite{GR1,PZ}
\begin{equation}\label{e}
\begin{cases}
\partial_tQ+\u\cdot\nabla Q-S(\nabla\u, Q)=\Gamma H\\
\partial_t\u+\u\cdot\nabla\u+\nabla P=\mu\D\u+\Dv(\tau+\sigma)\\
\Dv\u=0,
\end{cases}
\end{equation}
where $\Gamma>0$ is a relaxation time parameter, $\mu>0$ is the fluid viscosity constant, and $\tau$ is the symmetric part of the additional stress tensor given by
\begin{equation*}
\begin{split}
\tau_{\alpha\beta}&=-\xi\big(Q_{\alpha\gamma}+\f{\dl_{\alpha\gamma}}{3}\big)H_{\gamma\beta}
-\xi H_{\alpha\gamma}\big(Q_{\gamma\beta}+\f{\dl_{\gamma\beta}}{3}\big)\\
&\quad+2\xi\big(Q_{\alpha\beta}+\f{\dl_{\alpha\beta}}{3}\big)Q_{\gamma\dl}H_{\gamma\dl}-L\partial_\beta Q_{\gamma\dl}\partial_\alpha Q_{\gamma\dl},
\ 1\le\alpha,\beta\le 3, 
\end{split}
\end{equation*}
and $\sigma$ is the  antisymmetric part of the additional stress tensor:
$$\sigma_{\alpha\beta}=Q_{\alpha\gamma}H_{\gamma\beta}-H_{\alpha\gamma}Q_{\gamma\beta},
\ 1\le\alpha,\beta\le 3.$$

In this paper, we will focus on the co-rotational  Beris-Edwards system \eqref{e}, i.e., 
$$\fbox{$\xi=0$}$$
Since the exact values of $L, \Gamma, \mu$ don't play roles in our analysis, we will assume for simplicity
$$\fbox{$L=\Gamma=\mu=1$}$$
We will also assume the domain $\Omega$ to be 
$$\fbox{$\Omega=\begin{cases} \mathbb R^3 & if \ \  F_{\rm{bulk}}(Q)=F_{\rm{LdG}}(Q),\\
\mathbb{T}^3 & if \ \ F_{\rm{bulk}}(Q)=F_{\rm{BM}}(Q). 
\end{cases}
$}
$$
With these assumptions and the following identity:
$$\partial_\beta(\partial_\beta Q_{\gamma\dl}\partial_\alpha Q_{\gamma\dl} )
=\partial_\alpha Q_{\gamma\delta} \Delta Q_{\gamma\delta}+\partial_\alpha (\frac12|\nabla Q|^2),$$
the system \eqref{e} reduces to the following component-wise form (under the Einstein convention of summation).
\begin{equation}\label{e1}
\begin{cases}
&\partial_tQ_{\alpha\beta}+\u\cdot\nabla Q_{\alpha\beta}-\omega_{\alpha\gamma}Q_{\gamma\beta}+Q_{\alpha\gamma}\omega_{\gamma\beta}
=\D Q_{\alpha\beta}-f_{\rm{bulk}}(Q)_{\alpha\beta}\\
&\partial_t\u_{\alpha}+\u\cdot\nabla\u_{\alpha}+\partial_{\alpha} P=\D\u_{\alpha}-\partial_{\alpha}Q_{\xi\dl}\D Q_{\xi\dl}\\
&+\partial_\beta(Q_{\alpha\gamma}(\D Q_{\gamma\beta} -f_{\rm{bulk}}(Q)_{\gamma\beta})-
(\D Q_{\alpha\gamma}- f_{\rm{bulk}}(Q)_{\alpha\gamma}) Q_{\gamma\beta})\\
&\Dv\u=0
\end{cases} \ {\rm{in}}\  \Omega\times (0,\infty)
\end{equation}
subject to the initial condition
\begin{equation}\label{IC}
(\u, Q)|_{t=0}=({\u}_0,{Q}_0)(x)\quad\textrm{for}\quad x\in\Omega.
\end{equation}
A key feature of the Beris-Edwards system \eqref{e1} (or \eqref{e} in general) is the energy dissipation property, which plays a fundamental role in the analysis
of \eqref{e1}. More precisely,
if $(\u, Q):\Omega\times (0,\infty)\mapsto \mathbb R^3\times \mathcal{S}_0^{(3)}$ is a sufficiently regular solution
of \eqref{e}, then it satisfies the following energy inequality \cite{PZ, PZ1}:
\begin{equation}\label{genergy}
\begin{split}
\f{d}{dt}E(\u, Q)(t)=-\int_{\Omega}(|\nabla\u|^2+|H|^2)(x,t)\,dx
\end{split}
\end{equation}
where 
\begin{equation}\label{e1energy}
E(\u, Q)(t)=\int_{\Omega}(\frac12 |\u|^2+\frac12|\nabla Q|^2+{F}_{\rm{bulk}}(Q))(x,t)\,dx
\end{equation}
is the total energy of the complex fluid consisting of the elastic energy of the director field $Q$ and the kinetic energy of the underlying fluid $\u$.
While the right hand side of \eqref{genergy} denotes the dissipation rate of this system of complex fluid.

\bigskip
\noindent{\bf Some Notations.} For $Q\in \mathcal{S}_0^{(3)}$,  we use the Frobenius norm of $Q$, i.e.
$$|Q|=\sqrt{\tr (Q^2)}=\sqrt{Q_{\alpha\beta}Q_{\alpha\beta}},$$ 
and the Sobolev spaces  of $Q$-tensors, $W^{l,p}\big(\Omega, \mathcal{S}_0^{(3)}\big)$ ($l\in\mathbb{N}_+$ and $1\le p\le \infty$), are defined by
$$W^{l,p}\big(\Omega, \mathcal{S}_0^{(3)}\big)=\Big\{Q=(Q_{\alpha\beta}):\Omega\mapsto\mathcal{S}_0^{(3)}:
\ Q_{\alpha\beta}\in W^{l,p}(\Omega), \ \forall 1\le \alpha,\beta\le 3\Big\}.$$
When $p=2$, we denote $W^{l,2}\big(\Omega, \mathcal{S}_0^{(3)}\big)$ by $H^l(\Omega,\mathcal{S}_0^{(3)})$.
For $A, B\in  \R^{3\times 3}$, we denote 
$$A:B=A_{\alpha\beta}B_{\alpha\beta},\ A\cdot B=\tr(AB),\   |\nabla Q|^2=Q_{\alpha\beta,\gamma}Q_{\alpha\beta,\gamma}, 
\ |\D Q|^2=\D Q_{\alpha\beta}\D Q_{\alpha\beta},$$
and
$$ (\u\otimes\u)_{\alpha\beta}=\u_\alpha \u_\beta,\ \ (\nabla Q\otimes\nabla Q)_{\alpha\beta}=\nabla_\alpha Q_{\gamma\delta} \nabla_\beta Q_{\gamma\delta}.$$
Note that $A:B=A\cdot B$ for $A, B\in \mathcal{S}_0^{(3)}$.

Define
$${\H}={\rm{Closure \ of}}\ \Big\{\u\in C_0^\infty(\Omega,\mathbb R^3): \ \Dv\u=0\Big\} \ {\rm{in}}\ L^2(\Omega),$$
and
$${\bf V}={\rm{Closure \ of}}\ \Big\{\u\in C_0^\infty(\Omega,\mathbb R^3): \ \Dv\u=0\Big\} \ {\rm{in}}\ H^1(\Omega).$$

For $0\le k\le 5$, $\mathcal{P}^k$ denotes the  $k$-dimensional Hausdorff measure on $\mathbb R^3\times\mathbb R_+$
with respect to the parabolic distance:
$$\delta((x,t), (y,s))=\max\Big\{|x-y|,\sqrt{|t-s|}\Big\}, \ \forall (x,t), \ (y,s)\in \mathbb R^3\times\mathbb R_+.$$

Now we would like to recall the definition of weak solutions of \eqref{e1}. 

\begin{Definition}\label{weak solution} A pair of functions $(\u, Q): \Omega\times (0,\infty)\mapsto \mathbb R^3\times \mathcal{S}_0^{(3)}$
is a weak solution of \eqref{e1} and \eqref{IC}, if 
$\u\in L^\infty_tL^2_x\cap L^2_tH^1_x(\Omega\times (0,\infty))$ and $Q\in L^\infty_tH^1_x\cap L^2_tH^2_x(\Omega\times (0,\infty))$,
and for any $\phi\in C_0^\infty\big(\Omega\times [0,\infty),\mathcal{S}_0^{(3)}\big)$ 
and $\psi\in C_0^\infty\big(\Omega\times [0,\infty),\mathbb R^3\big)$, with $\Dv \psi=0$ in $\Omega\times [0,\infty)$, it holds
\begin{equation}\label{ws1}
\begin{split}
&\int_{\Omega\times (0,\infty)} \big[-Q\cdot\partial_t\phi -\Delta Q\cdot\phi-Q\cdot \u\otimes\nabla\phi +Q\omega\cdot \phi-\omega Q\cdot\phi\big]\,dxdt\\
&=-\int_{\Omega\times (0,\infty)}f_{\rm{bulk}}(Q)\cdot\phi\,dxdt+\int_\Omega Q_0(x)\cdot\phi(x,0)\,dx,
\end{split}
\end{equation}
and
\begin{eqnarray}
\label{ws2}
&&\int_{\Omega\times (0,\infty)} \big[-\u\cdot\partial_t\psi +\nabla \u\cdot\nabla\psi-\u\otimes\u:\nabla\psi\big]\,dxdt=\nonumber\\
&&\int_{\Omega\times (0,\infty)}\big[- \D Q (\psi\cdot\nabla) Q+\big((\Delta Q-f_{\rm{bulk}}(Q)) Q-Q(\Delta Q-f_{\rm{bulk}}(Q))\big)\cdot\nabla\psi\big]\,dxdt\nonumber\\
&&\ \ +\int_\Omega \u_0(x)\cdot\psi(x,0)\,dx,
\end{eqnarray}
\end{Definition}

Paicu-Zarnescu \cite{PZ} have obtained the existence of global weak solutions to \eqref{e1} and \eqref{IC} in $\mathbb R^3$,
and the existence of global strong solutions to \eqref{e1} and \eqref{IC} in $\mathbb R^2$, when the bulk potential function is $F_{\rm{LdG}}(Q)$. 
For non-corotational Beris-Edwards system (i.e. $\xi\not=0$), Paicu-Zarnescu \cite{PZ1} have obtained the existence of global weak solutions to \eqref{e1} and \eqref{IC} in $\mathbb R^3$ for sufficiently small $|\xi|>0$. Later, Cavaterra-Rocca-Wu-Xu \cite{CRWX} have removed the smallness condition on $\xi$ for
 \eqref{e1} and \eqref{IC} in $\mathbb R^2$. Wilkinson \cite{Wilkinson} has obtained the existence of global  weak solutions to  \eqref{e1} and \eqref{IC}
 in three dimensional torus $\mathbb{T}^3$, when the bulk potential function is the Ball-Majumdar potential  $F_{\rm{BM}}(Q)$. The situation of Beris-Edwards system \eqref{e1} for the De Gennes potential $F_{\rm{LdG}}(Q)$ on bounded domains, under the initial-boundary condition,  behaves slightly different from that on $\mathbb R^3$. In fact, Abels-Dolzmann-Liu
 \cite{ADL1, ADL2} have established the well-posedness of \eqref{e} for any arbitrary constant $\xi$. See also \cite{FSRZ}
 for related works on nonisothermal Beris-Edwards system. We also mention an interesting work on the dynamics of
 $Q$-tensor system by Wu-Xu-Zarnescu \cite{WXZ}.  Interested readers can refer to Wang-Zhang-Zhang \cite{WZZ}
for a rigorous derivation from Landau-De Gennes theory to Ericksen-Leslie theory. For related works on the existence of global weak solutions to the simplified Ericksen-Leslie system, see \cite{LLW, LW1, LW2, HLW}. 
 
These previous works mentioned above left the question open that if certain weak solutions of \eqref{e} pose either smoothness
or partial smoothness properties. This motivates us to study both the existence of suitable weak solutions of \eqref{e1}
and their partial regularities.  The notion of suitable weak solutions was first introduced by Caffarelli-Kohn-Nirenberg
\cite{CKN} and Scheffer \cite{S} for the Navier-Stokes equation, and later extended by  Lin-Liu \cite{LL1, LL2} for the simplified Ericksen-Leslie system with variable degree of orientations. Here we introduce the notion of suitable weak solutions 
to the Beris-Edwards system as follows.

\begin{Definition} A weak solution $(\u,P, Q)\in (L^\infty_tL^2_x\cap L^2_tH^1_x)(\Omega\times(0,\infty), \mathbb R^3)
\times L^\frac32(\Omega\times (0,\infty))\times (L^\infty_t H^1_x\cap L^2_tH^2_x)(\Omega\times(0,\infty), \mathcal{S}_0^{(3)})$  of \eqref{e1} and \eqref{IC} is a suitable weak solution of \eqref{e1},
if, in addition, $(\u,P, Q)$ satisfies the local energy inequality: $\forall\ 0\le \phi\in C_0^\infty(\Omega\times (0,t])$,
\begin{eqnarray}\label{lei}
&&\int_{\Omega}(|\u|^2+|\nabla Q|^2)\phi(x,t)\,dx+2\int_{Q_t}(|\nabla \u|^2+|\D Q|^2)\phi(x,s)\,dxds\nonumber\\
&&\le \int_{Q_t}(|\u|^2+|\nabla Q|^2)(\partial_t\phi+\Delta\phi)(x,s)\,dxds\nonumber\\
&&+\int_{Q_t}[(|\u|^2+2P)\u\cdot\nabla\phi+2\nabla Q\otimes\nabla Q: \u\otimes\nabla\phi](x,s)\,dxds\\
&& +2\int_{Q_t}(\nabla Q\otimes\nabla Q-|\nabla Q|^2I_3):
\nabla^2\phi (x,s)\,dxds\nonumber\\
&& -2\int_{Q_t} \big((Q\Delta Q-f_{\rm{bulk}}(Q))-(\Delta Q-f_{\rm{bulk}}(Q)) Q\big)\cdot\u\otimes\nabla\phi(x,s)\,dxds\nonumber\\
&& -2\int_{Q_t}\big[(\omega Q-Q\omega)\cdot \nabla Q \nabla\phi
+\nabla ( {f}_{\rm{bulk}}(Q))\cdot\nabla Q \phi\big](x,s)\,dxds.\nonumber
\end{eqnarray}
\end{Definition}

The notion of suitable weak solutions turns out to be a necessary condition for the smoothness of \eqref{e1}.
In fact, the local energy inequality \eqref{lei} automatically holds for sufficiently regular solution of \eqref{e},
which can be obtained  by 
multiplying \eqref{e}$_2$ by $\u\phi$, and taking spatial derivative of \eqref{e}$_1$ and multiplying
the resulting equation by $\nabla Q\phi$, and then applying integration by parts, see Lemma 2.2 below
for the details. We would like to point out that  in the process of derivation of \eqref{lei},
the following cancellation identity:
\begin{equation}\label{can2}
\begin{split}
&\int_\Omega (Q\omega-\omega Q):(\Delta Q-f_{\rm{bulk}}(Q))\phi\,dx\\
&=- \int_\Omega \big(Q(\Delta Q-f_{\rm{bulk}}(Q))-(\Delta Q-f_{\rm{bulk}}(Q)) Q\big):\nabla \u\phi\,dx
\end{split}
\end{equation}
play critical roles.

\medskip
Now we are ready to state our main theorem, which is valid for the Beris-Edwards system associate 
with both the Landau-De Gennes bulk potential $F_{\rm{LdG}}(Q)$ in $\mathbb R^3$ and
Ball-Majumdar bulk potential $F_{\rm{BM}}(Q)$ in $\mathbb T^3$. More precisely,  we have

\begin{Theorem}\label{main} For any ${\bf u}_0\in {\bf H}$, if either \\
$(i)$ $\Omega=\mathbb R^3$, $F_{\rm{bulk}}(\cdot)=F_{\rm{LdG}}(\cdot)$ with $c>0$,
and $Q_0\in H^1(\mathbb R^3, \mathcal{S}_0^{(3)})\cap L^\infty(\mathbb R^3, \mathcal{S}_0^{(3)} )$, or\\
$(ii)$ $\Omega=\mathbb T^3$, $F_{\rm{bulk}}(\cdot)=F_{\rm{BM}}(\cdot)$,  and $Q_0\in H^1(\mathbb T^3, \mathcal{S}_0^{(3)})$
satisfies $G_{\rm{bulk}}(Q_0)\in L^1(\mathbb T^3)$,\\
then there exists a global
suitable weak solution $({\bf u}, P, Q):\Omega\times \mathbb R_+\mapsto\mathbb{R}^3\times\mathbb R\times\mathcal{S}_0^{(3)}$
of the Beris-Edwards system \eqref{e1}, subject to the initial condition \eqref{IC}. Moreover, 
$$({\bf u}, Q)\in C^\infty(\Omega\times (0,\infty)\setminus\Sigma),$$
where $\Sigma\subset\Omega\times\mathbb R_+$ is  a closed subset 
with $\mathcal{P}^1(\Sigma)=0$.
\end{Theorem}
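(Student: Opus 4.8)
The plan is to follow the two-part scheme of Caffarelli--Kohn--Nirenberg \cite{CKN} and Lin--Liu \cite{LL1,LL2}: first construct a \emph{suitable} weak solution by an approximation for which the localized energy \emph{identity} holds, and then prove an $\varepsilon$-regularity theorem together with a covering argument. For the construction I would regularize \eqref{e1} by a retarded mollification of the advective velocity and of the quadratic stress (and, in case $(ii)$, also replace $F_{\rm BM}$ by a smooth strictly convex approximation on $\mathcal D$ as in Wilkinson \cite{Wilkinson}), taking care that the \emph{same} mollified $Q$ enters the rotation term $\omega Q-Q\omega$ and the antisymmetric stress $QH-HQ$ so that the algebraic cancellation \eqref{can2} is preserved. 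Each regularized problem is semilinear with mollified principal nonlinearities, hence admits a global smooth solution $(\u^k,P^k,Q^k)$ satisfying the global energy identity \eqref{genergy} and the localized energy identity of Lemma 2.2 below. From \eqref{genergy} one obtains uniform bounds $\u^k\in L^\infty_tL^2_x\cap L^2_tH^1_x$ and $Q^k\in L^\infty_tH^1_x\cap L^2_tH^2_x$; a maximum principle for the $Q$-equation gives a uniform $L^\infty$-bound on $Q^k$ in case $(i)$, while in case $(ii)$ the singular structure of $F_{\rm BM}$ confines the eigenvalues $\lambda_j(Q^k)$ to a compact subset of $(-\tfrac13,\tfrac23)$; and the pressure equation yields $P^k\in L^{3/2}_{\rm loc}$. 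Aubin--Lions compactness then gives strong $L^2_{\rm loc}$-convergence of $\u^k$ and $\nabla Q^k$, which is enough to pass to the limit in all nonlinear terms of \eqref{ws1}--\eqref{ws2}, and, combined with the weak lower semicontinuity of the dissipation $\int(|\nabla\u|^2+|\nabla^2Q|^2)\phi$, to recover the local energy inequality \eqref{lei}.

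\textbf{$\varepsilon$-regularity.} The heart of the argument is the assertion: there exists $\varepsilon_0>0$ such that if $z_0=(x_0,t_0)\in\Omega\times(0,\infty)$ satisfies
\[
\limsup_{r\to0}\ \frac1r\int_{B_r(x_0)\times(t_0-r^2,\,t_0)}\big(|\nabla\u|^2+|\nabla^2 Q|^2\big)\,dx\,dt<\varepsilon_0,
\]
then $(\u,Q)$ is smooth in a parabolic neighborhood of $z_0$, with scale-invariant bounds. I would prove this by iterating a decay estimate over dyadic parabolic cylinders, in the manner of \cite{CKN,LL1}: using the local energy inequality \eqref{lei}, the localized pressure identity $-\Delta P=\partial_i\partial_j\big((\u\otimes\u)_{ij}+(\nabla Q\otimes\nabla Q)_{ij}\big)+\text{l.o.t.}$ together with its Calder\'on--Zygmund $L^{3/2}$-bound, and interior parabolic estimates for the semilinear $Q$-system---the terms $\omega Q-Q\omega$, $\partial_\beta(Q\Delta Q-\Delta Q\,Q)$ and $f_{\rm bulk}(Q)$ being subcritical once the $L^\infty$/eigenvalue bounds are in force---one shows that an appropriate scale-invariant excess decays geometrically once it lies below $\varepsilon_0$. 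A Morrey--Campanato argument then yields H\"older continuity of $(\u,\nabla Q)$ near $z_0$, and Schauder bootstrapping upgrades this to $(\u,Q)\in C^\infty$.

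\textbf{Size of the singular set.} Let $\Sigma$ be the complement in $\Omega\times(0,\infty)$ of the set of regular points. Since regularity at a point forces smoothness in a whole neighborhood of it, the regular set is open and $\Sigma$ is closed. The dissipation measure $\mu:=(|\nabla\u|^2+|\nabla^2Q|^2)\,dx\,dt$ is finite on $\Omega\times(0,\infty)$ by \eqref{genergy}, and $\Sigma\subset\{z:\limsup_{r\to0}r^{-1}\mu(B_r(x)\times(t-r^2,t))\ge\varepsilon_0\}$. A Vitali-type covering lemma for the parabolic measure $\mathcal P^1$ then gives $\mathcal P^1(\Sigma)\le C\varepsilon_0^{-1}\mu(\Omega\times(0,\infty))<\infty$; hence $\Sigma$ has zero four-dimensional Lebesgue measure, so $\mu(\Sigma)=0$ since $\mu$ is absolutely continuous with respect to Lebesgue measure, and re-running the covering lemma with $\mu$ restricted to $\Sigma$ yields $\mathcal P^1(\Sigma)\le C\varepsilon_0^{-1}\mu(\Sigma)=0$.

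\textbf{Main obstacle.} The decisive difficulty is the $\varepsilon$-regularity theorem: one must carry the pressure---only locally defined and a priori only in $L^{3/2}$---through the localization, and show that the strongly coupled terms $\partial_\beta(Q\Delta Q-\Delta Q\,Q)$ and $(\omega Q-Q\omega)\cdot\nabla Q$ can be absorbed as perturbations on the regular set. In case $(ii)$ there is the further subtlety that $f_{\rm bulk}=\nu\langle\nabla G_{\rm BM}(Q)\rangle-\kappa Q$ is smooth only in the interior of $\mathcal D$, so a quantitative bound keeping $Q$ in a compact subset of $\mathcal D$ on the regular set must be secured before the Schauder theory can be applied.
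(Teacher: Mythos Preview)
Your overall architecture---retarded mollification preserving the cancellation \eqref{can2}, maximum principle, local energy inequality, $\varepsilon$-regularity, Vitali covering---matches the paper's. Two points deserve comment.

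\textbf{Route to the $\varepsilon$-regularity.} The paper does not go directly from the gradient smallness condition \eqref{small_cond1} to regularity via a single CKN-style iteration. Instead it first proves (Lemma~5.1) an $\varepsilon_0$-regularity in terms of the \emph{renormalized $L^3$ quantity}
\[
\Phi(z_0,r)=r^{-2}\!\int_{\mathbb P_r(z_0)}(|\u|^3+|\nabla Q|^3)+\Big(r^{-2}\!\int_{\mathbb P_r(z_0)}|P|^{3/2}\Big)^2
\]
by a \emph{blow-up argument} \`a la Lin: if the decay fails, one rescales and passes to a limit solving the linear system \eqref{limit-eqn}, whose smoothness (Lemma~5.2) already requires a higher-order energy argument exploiting that $\Dv^2(\overline Q\,\Delta\widehat Q-\Delta\widehat Q\,\overline Q)=0$. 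Only afterwards (\S6) do the A--B--C--D lemmas convert the gradient condition \eqref{small_cond1} into smallness of $\Phi$. Your direct CKN iteration is a legitimate alternative, but the blow-up route is what makes the pressure and the antisymmetric-stress coupling manageable in the paper.

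\textbf{A genuine gap: the higher-order regularity.} Your line ``Schauder bootstrapping upgrades this to $(\u,Q)\in C^\infty$'' underestimates the difficulty. The $\u$-equation contains $\Dv(Q\Delta Q-\Delta Q\,Q)$, i.e.\ a \emph{third} derivative of $Q$, while the $Q$-equation contains $\nabla\u$; so once you know $(\u,\nabla Q)\in L^p$ for all $p$, the linear parabolic/Stokes theory still cannot be applied equation-by-equation. The paper closes this by a dedicated higher-order energy estimate (Lemma~5.5): one differentiates the system, tests with $\nabla Q_\alpha\eta^2$ and $\u_\alpha\eta^2$, and uses the higher-order analogue of \eqref{can2},
\[
\int(\omega_\alpha Q-Q\omega_\alpha):\Delta Q_\alpha\,\eta^2=-\int(Q\Delta Q_\alpha-\Delta Q_\alpha Q):\nabla\u_\alpha\,\eta^2,
\]
to cancel the worst terms; the remainder is then controlled by the $L^p$ bounds already obtained via Riesz-potential/Morrey estimates. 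You should either flag this explicitly or incorporate the higher-order cancellation into your sketch.

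\textbf{Minor point on case $(ii)$.} The eigenvalue confinement is not automatic from ``the singular structure of $F_{\rm BM}$''; it comes from a generalized maximum principle (Lemma~3.2) showing $\|G_{\rm BM}^m(Q^m)(\cdot,t)\|_{L^\infty}\le C_0 t^{-5/2}\|G_{\rm BM}(Q_0)\|_{L^1}+C_0$, obtained by observing that $G_{\rm BM}^m(Q^m)$ is a subsolution of a drift-diffusion equation (using convexity of $G_{\rm BM}^m$) and comparing with an auxiliary solution. This is the quantitative bound you allude to in your last paragraph, and it is needed \emph{before} any regularity theory is invoked.
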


We would like to highlight some crucial steps of the proof for Theorem \ref{main}:
\begin{enumerate}
\item The existence of suitable weak solutions to \eqref{e1} and \eqref{IC} 
is obtained by modifying the retarded mollification technique, 
originally due to \cite{S} and \cite{CKN} in the construction of suitable weak solutions to the Navier-Stokes
equation. 
\item For the Landau-De Gennes potential $F_{\rm{LdG}}(Q)$, we establish a weak maximum principle
of $Q$ for suitable weak solutions $(\u, P, Q)$ of \eqref{e1} and \eqref{IC} that bounds the $L^\infty$-norm
of $Q$ in $\mathbb R^3\times (0,\infty)$ in terms of that of initial data $Q_0$, see also \cite{GR1}. 
In particular, $\nabla^l_Q f_{\rm{LdG}}(Q)$ is also bounded in $\mathbb R^3\times (0,\infty)$ for $l\ge 0$.
\item For the Ball-Majumdar potential $F_{\rm{BM}}(Q)$,  we follow the approximation scheme of $G_{\rm{BM}}$
by Wilkinson \cite{Wilkinson} and use the convexity property of $G_{\rm{BM}}(Q)$ to bound
$$\|G_{\rm{BM}}(Q)\|_{L^\infty(\mathbb T^3\times [\delta,T])}, \ \forall 0<\delta<T<\infty,$$
in terms of $\|F_{\rm{BM}}(Q_0)\|_{L^1(\mathbb T^3)}$,
$\delta$,  and $T$. This guarantees that $Q$ is strictly physical in $\mathbb T^3\times [\delta,T]$, i.e., 
there exists a small $\gamma>0$, depending on $\delta, T$, such that
$$-\frac13+\gamma\le \lambda_j(Q(x,t)) \le \frac23-\gamma, \ j=1, 2,3, \ \forall (x,t)\in\mathbb T^3\times [\delta, T].$$
In particular, both $Q(x,t)$  and $f_{\rm{BM}}(Q(x,t))$ are bounded in
$\mathbb T^3\times [\delta,T]$.
\item Based on the local energy inequality \eqref{lei}, (2), and (3), we perform a blowing up argument 
to obtain an $\varepsilon_0$-regularity criteria of any suitable weak solution $(\u, P, Q)$ of \eqref{e1}, which asserts
that if 
\begin{equation}
\begin{split}
&\Phi(z_0, r):=\\
&r^{-2}\int_{\mathbb P_r(x_0,t_0)} (|\u|^3+|\nabla Q|^3)\,dxdt + \big(r^{-2}\int_{\mathbb P_r(x_0,t_0)} |P|^\frac32\,dxdt\big)^2
\le\varepsilon_0^3,
\end{split}
\end{equation} 
then $(x_0,t_0)\in \Omega\times (0,\infty)$ is a smooth point of $(\u, Q)$. The idea is to show that $(\u, P, Q)$ is well approximated by a smooth solution to a linear coupling system in the parabolic neighborhood 
$\mathbb{P}_{\frac{r}2}(x_0,t_0)$ of $(x_0,t_0)$, which heavily relies on the local energy inequality \eqref{lei}
and interior $L^\frac32$-estimate of the pressure function $P$, which turns out to solve the following Poisson equation:
\begin{equation}\label{P-eqn0}
-\Delta P={\rm{div}}^2(\u\otimes \u+(\nabla Q\otimes\nabla Q-\frac12|\nabla Q|^2I_3)) \ {\rm{in}}\ B_r(x_0).
\end{equation}
Here the following simple identity plays a crucial role in the derivation of \eqref{P-eqn0}.
\begin{equation}
\label{free-div2} {\rm{div}}^2\big(Q_1(\Delta Q_2-f_{\rm{bulk}}(Q_2))-(\Delta Q_2-f_{\rm{bulk}}(Q))Q_1\big)=0 \ {\rm{in}}\ B_r(x_0), 
\end{equation}
for $Q_1, Q_2\in H^2(B_r(x_0), \mathcal{S}_0^{(3)})$, whose proof is given in \S 2.

This blowing up argument implies that for some $\theta\in (0,1)$, $\Phi_{(x_*,t_*)}(r) \leq C r^{3\theta}$
for $(x_*, t_*)$ near $(x_0,t_0)$ , which can be used to further show 
that $(\u, \nabla Q)$ are almost bounded near $(x_0,t_0)$ by an iterated Reisz potential estimates
in the parabolic Morrey spaces, see also Huang-Wang \cite{HW},
Hineman-Wang \cite{HLW1}, and Huang-Lin-Wang \cite{HLW}. 
Higher order regularity of $(\u, Q)$ near $(x_0,t_0)$ turns out to be more involved
than the usual situations, due to the special nonlinearities. Here we establish  it by performing higher order
energy estimates and utilizing the intrinsic cancellation property, see also \cite{HLW} for a similar argument
on general Ericksen-Leslie system in dimension two. 
It is well-known \cite{S} that this step is sufficient to show that $(\u, Q)$ is smooth away from a closed set
$\Sigma$ which has $\mathcal{P}^\frac53(\Sigma)=0$.

\item  To obtain $\mathcal{P}^1(\Sigma)=0$ from the previous step, we adapt the
argument by \cite{CKN} to show  if
\begin{equation}
\displaystyle{{\overline\lim}}_{r\to 0} r^{-1}\int_{\mathbb P_r(x_0,t_0)} (|\nabla\u|^2+|\nabla^2 Q|^2)\,dxdt<\varepsilon_1^2,
\end{equation}
then $(\u, Q)\in C^\infty(\mathbb P_{\frac{r}2}(x_0,t_0))$.
This will be established by extending the  so called A, B, C, D Lemmas in \cite{CKN} to system \eqref{e1}. 
\end{enumerate}

The paper is organized as follows. In \S2, we derive both the global and local energy inequality for
sufficiently regular solutions of \eqref{e1}. In \S3, we indicate the construction of suitable weak solutions
to \eqref{e1} and \eqref{IC} for both Landau-De Gennes potential and Ball-Majumdar potential. In \S4, we prove two 
weak maximum principles for suitable weak solutions
to \eqref{e1} and \eqref{IC}: one for $Q$ and the other for $G_{\rm{BM}}(Q)$. 
In \S5, we prove the first $\varepsilon_0$-regularity of suitable weak solutions
to \eqref{e1} and \eqref{IC} in terms of $\Phi(z_0, r)$. 
In \S6, we will prove the second $\varepsilon_0$-regularity of suitable weak solutions
to \eqref{e1} and \eqref{IC} in terms of (1.17).

\section{Global and local energy inequalities}

In this section,  we will present  proofs for both global energy inequality and local energy inequality 
for sufficiently regular solutions to the Beris-Edwards system \eqref{e1}.

\begin{Lemma}\label{L0} Let $({\bf u}, Q)\in C^\infty(\Omega\times(0,\infty),
\mathbb R^3\times \mathcal{S}_0^{(3)})$ be a
smooth solution of Beris-Edwards system \eqref{e1}. Then the global energy inequality 
\eqref{genergy} holds.
\end{Lemma}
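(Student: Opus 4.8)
The plan is to test the two evolution equations in \eqref{e1} against their natural multipliers and add the resulting identities. Concretely, I would pair the $Q$-equation \eqref{e1}$_1$ with $H=\D Q-f_{\rm bulk}(Q)$ and the momentum equation \eqref{e1}$_2$ with $\u$, integrate over $\O$, and integrate by parts. Since $(\u,Q)$ is smooth, all these manipulations are legitimate: when $\O=\mathbb T^3$ the boundary terms vanish by periodicity, while when $\O=\mathbb R^3$ (so that $F_{\rm bulk}=F_{\rm LdG}$) they vanish by the decay built into a sufficiently regular finite-energy solution, using $\u\in L^2$, $\na Q\in L^2$, $F_{\rm LdG}(Q)\in L^1$. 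When $F_{\rm bulk}=F_{\rm BM}$, the $C^\infty$ regularity of $Q$ forces $Q$ to stay in the interior of $\mathcal D$, so $F_{\rm BM}(Q)$ and $f_{\rm BM}(Q)$ are smooth and the chain rule applies to them as well.

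First I would dispose of the terms that are exact time derivatives or dissipation. From the $Q$-equation, $\int_\O\del_t Q\cd\D Q\,dx=-\f12\f{d}{dt}\int_\O|\na Q|^2\,dx$, and $\int_\O \del_t Q\cd f_{\rm bulk}(Q)\,dx=\f{d}{dt}\int_\O F_{\rm bulk}(Q)\,dx$; the latter holds because $\del_t Q$ is traceless and $f_{\rm bulk}(Q)=\langle\na F_{\rm bulk}(Q)\rangle$, so the $\f13\tr(\na F_{\rm bulk}(Q))I_3$ part drops out of the contraction and the chain rule gives $\del_t Q\cd\na F_{\rm bulk}(Q)=\del_t\big(F_{\rm bulk}(Q)\big)$. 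From the momentum equation, $\int_\O\del_t\u\cd\u\,dx=\f12\f{d}{dt}\int_\O|\u|^2\,dx$ and $\int_\O\D\u\cd\u\,dx=-\int_\O|\na\u|^2\,dx$, while the convective and pressure terms vanish since $\Dv\u=0$: $\int_\O(\u\cd\na\u)\cd\u\,dx=0$ and $\int_\O\na P\cd\u\,dx=0$. Testing the $Q$-equation with $H$ also yields the dissipation $\int_\O|H|^2\,dx$ on the right-hand side.

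Two coupling terms survive and must cancel. The first is the transport coupling: the contribution $-\int_\O(\u\cd\na Q)\cd\D Q\,dx$ of the term $-\del_\a Q_{\gamma\dl}\D Q_{\gamma\dl}$ in \eqref{e1}$_2$ combines with the contribution $\int_\O(\u\cd\na Q)\cd H\,dx$ from the $Q$-equation to give $-\int_\O(\u\cd\na Q)\cd f_{\rm bulk}(Q)\,dx=-\int_\O\u\cd\na\big(F_{\rm bulk}(Q)\big)\,dx=0$, again using tracelessness of $\na Q$, the identity $f_{\rm bulk}(Q)=\langle\na F_{\rm bulk}(Q)\rangle$, and then $\Dv\u=0$. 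The second is the rotational coupling: testing the antisymmetric term $-\omega_{\a\gamma}Q_{\gamma\beta}+Q_{\a\gamma}\omega_{\gamma\beta}$ of \eqref{e1}$_1$ against $H$ produces $\int_\O(Q\omega-\omega Q)\cd H\,dx$, while integrating by parts the stress term $\del_\beta(Q_{\a\gamma}H_{\gamma\beta}-H_{\a\gamma}Q_{\gamma\beta})$ of \eqref{e1}$_2$ against $\u_\a$ produces a term of the form $\int_\O(QH-HQ)\cd\na\u\,dx$; the cancellation identity \eqref{can2}, applied with $Q_1=Q_2=Q$ and test function $\phi\equiv1$, says precisely that these two terms are negatives of one another, so their sum is zero. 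Collecting all the identities leaves $\f{d}{dt}E(\u,Q)(t)=-\int_\O(|\na\u|^2+|H|^2)(x,t)\,dx$, which is \eqref{genergy}.

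The computation is otherwise routine; the step requiring genuine care is the sign bookkeeping in the rotational coupling. One must check that the stress divergence in the momentum equation, after one integration by parts, produces $QH-HQ$ contracted with $\na\u$ with exactly the sign opposite to that coming from the antisymmetric term of the $Q$-equation through \eqref{can2} — that is, that \eqref{can2} produces a cancellation rather than a reinforcement; this hinges on the conventions for $\omega$ and $\na\u$ fixed earlier. A secondary, more standard point is the justification of the integrations by parts at spatial infinity when $\O=\mathbb R^3$, which is where the assumed regularity and finite-energy structure of $(\u,Q)$ enter.
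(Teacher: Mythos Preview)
Your proposal is correct and follows essentially the same route as the paper: test \eqref{e1}$_1$ with $H$, test \eqref{e1}$_2$ with $\u$, then add and use the rotational cancellation \eqref{can2} together with $\Dv\u=0$ to kill the transport term $\u\cdot\nabla(F_{\rm bulk}(Q))$. The only cosmetic difference is bookkeeping in the transport coupling: the paper first observes $\int_\O\u\cdot\nabla(F_{\rm bulk}(Q))\,dx=0$ inside the $Q$-equation so that $\int_\O(\u\cdot\nabla Q)\cdot\D Q\,dx$ survives and then cancels directly against the $-\nabla Q\cdot\D Q$ term from the momentum equation, whereas you combine $-\int_\O(\u\cdot\nabla Q)\cdot\D Q\,dx$ with $\int_\O(\u\cdot\nabla Q)\cdot H\,dx$ first; these are the same computation. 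One tiny slip: \eqref{can2} does not take arguments $Q_1,Q_2$ (you are thinking of \eqref{free-div2}); it is already stated for a single $Q$ with a test function $\phi$, so just set $\phi\equiv1$.
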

\begin{proof} Multiplying the  equation \eqref{e1}$_1$ by $H$ and integrating over $\Omega$, we obtain
\begin{eqnarray}\label{L00}
&&\frac{d}{dt}\int_{\Omega} (\frac12|\nabla Q|^2+F_{\rm{bulk}}(Q))\,dx+\int_{\Omega} |H|^2\,dx\nonumber\\
&&=\int_{\Omega} ({\bf u}\cdot\nabla Q):\Delta Q\,dx+\int_{\Omega} (Q\omega -\omega Q)\cdot(\Delta Q-f_{\rm{bulk}}(Q)) \,dx
-\int_{\Omega} \u\cdot\nabla (F_{\rm{bulk}}(Q))\,dx\nonumber\\
&&=\int_{\Omega} ({\bf u}\cdot\nabla Q):\Delta Q\,dx+\int_{\O} (Q\omega -\omega Q)\cdot(\Delta Q-f_{\rm{bulk}}(Q))\,dx. 
\end{eqnarray}
Now we multiply the equation \eqref{e1}$_2$ by $\u$ and integrate over $\O$ to obtain that
\begin{eqnarray}\label{L01}
&&\frac{d}{dt}\int_{\O} \frac12|{\bf u}|^2\,dx+ \int_{\O}|\nabla{\bf u}|^2\,dx\nonumber\\
&&=\int_{\O} (-\Delta Q\cdot \nabla Q)\cdot {\bf u}+\big(Q(\Delta Q-f_{\rm{bulk}}(Q))-(\Delta Q-f_{\rm{bulk}}(Q)) Q\big)\cdot\nabla {\bf u}\,dx.
\end{eqnarray}
Note that direct calculations yield the following identity:
$$
\int_{\O} (Q\omega -\omega Q)\cdot(\Delta Q-f_{\rm{bulk}}(Q))\,dx=-\int_{\O} \big(Q(\Delta Q-f_{\rm{bulk}}(Q))-(\Delta Q-f_{\rm{bulk}}(Q)) Q\big)\cdot\nabla {\bf u}\,dx.
$$
Therefore, by adding \eqref{L00} and \eqref{L01}, we obtain \eqref{genergy}. This completes the proof.
\end{proof}
Next we are going to present a local energy inequality for sufficiently regular solutions to the system \eqref{e1}. 
\begin{Lemma}\label{L1} Assume $(\u, P, Q)\in C^\infty(\Omega\times (0,\infty),
\mathbb R^3\times \mathbb R\times \mathcal{S}_0^{(3)})$ is a smooth solution of \eqref{e1}.
Then for $t>0$ and any nonnegative $\phi\in C_0^\infty(\Omega\times (0,t])$, 
the following inequality holds on $Q_t=\Omega\times [0,t]$:
\begin{equation}\label{lei1}
\begin{split}
&\int_{\O}\big(|\u|^2+|\nabla Q|^2\big)\phi(x,t)\,dx+2\int_{Q_t}\Big(|\nabla\u|^2+|\D Q|^2\Big)\phi \,dxds\\
&=\int_{Q_t}\Big(|\u|^2+|\nabla Q|^2\Big)(\partial_t+\D)\phi\,dxds\\
&+\int_{Q_t}[(|\u|^2+2P)\u\cdot\nabla\phi+2(\nabla Q\otimes\nabla Q):\u\otimes\nabla\phi]\, dxds\\
&+2\int_{Q_t}[(\nabla Q\otimes\nabla Q-|\nabla Q|^2 I_3):\nabla^2\phi\,dxds\\
&-2\int_{Q_t} (Q(\D Q-f_{\rm{bulk}}(Q))-(\D Q-f_{\rm{bulk}}(Q)) Q):\u\otimes\nabla\phi]\, dxds\\
&-2\int_{Q_t}\Big[(\omega Q-Q\omega):\nabla Q\nabla\phi +\nabla(f_{\rm{bulk}}(Q))\cdot \nabla Q\phi\Big]\, dxds.
\end{split}
\end{equation}
\end{Lemma}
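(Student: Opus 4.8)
The strategy is to test the two equations of \eqref{e1} against their $\phi$-weighted energy multipliers, integrate by parts, add the resulting identities, and cancel the cross terms using $\Dv\u=0$, the identity \eqref{can2}, and the commutation relation $[Q,f_{\rm{bulk}}(Q)]=0$. The last holds because $f_{\rm{bulk}}(Q)$ is an isotropic function of $Q$: for $F_{\rm{LdG}}$ every summand of $f_{\rm{LdG}}(Q)=aQ-b(Q^2-\tfrac{\tr(Q^2)}3I_3)+cQ\,\tr(Q^2)$ commutes with $Q$, and for $F_{\rm{BM}}$ the functional $G_{\rm{BM}}$ is $O(3)$-invariant, so $\nabla G_{\rm{BM}}(Q)$ — and hence $f_{\rm{BM}}(Q)$ — is simultaneously diagonalizable with $Q$. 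Since $\phi\in C_0^\infty(\Omega\times(0,t])$, all spatial boundary terms and the temporal boundary term at $s=0$ vanish in every integration by parts, while the temporal boundary at $s=t$ produces the left-hand side $\int_\Omega(|\u|^2+|\nabla Q|^2)\phi(x,t)\,dx$ of \eqref{lei1}.

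First I would multiply \eqref{e1}$_2$ by $2\u\phi$ and integrate over $Q_t$; using $\Dv\u=0$ and integrating by parts, this yields an identity whose left side is $\int_\Omega|\u|^2\phi(x,t)\,dx+2\int_{Q_t}|\nabla\u|^2\phi$ and whose right side collects $\int_{Q_t}|\u|^2(\partial_t+\D)\phi$, $\int_{Q_t}(|\u|^2+2P)\u\cdot\nabla\phi$, the elastic-coupling term $T_1:=-2\int_{Q_t}(\u\cdot\nabla Q)\cdot\D Q\,\phi$, and — from the divergence of the stress — the term $T_2:=-2\int_{Q_t}\big(Q(\D Q-f_{\rm{bulk}}(Q))-(\D Q-f_{\rm{bulk}}(Q))Q\big):\nabla\u\,\phi$ together with exactly the term $-2\int_{Q_t}\big(Q(\D Q-f_{\rm{bulk}}(Q))-(\D Q-f_{\rm{bulk}}(Q))Q\big):\u\otimes\nabla\phi$ of \eqref{lei1}. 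Next I would differentiate \eqref{e1}$_1$ in $x_k$, multiply by $2\partial_k Q\,\phi$, sum in $k$, and integrate over $Q_t$; using $\Dv\u=0$ and integrating by parts, this yields an identity whose left side is $\int_\Omega|\nabla Q|^2\phi(x,t)\,dx+2\int_{Q_t}|\nabla^2 Q|^2\phi$ (the $|\nabla^2 Q|^2$ coming from the diffusion term after two integrations by parts) and whose right side collects $\int_{Q_t}|\nabla Q|^2(\partial_t+\D)\phi$; from the transport term, after one more integration by parts, $-T_1+2\int_{Q_t}(\nabla Q\otimes\nabla Q):\u\otimes\nabla\phi$ (so that $T_1$ reappears with the sign opposite to that in the velocity identity); from the rotational terms $-\omega_{\alpha\gamma}Q_{\gamma\beta}+Q_{\alpha\gamma}\omega_{\gamma\beta}$, after one integration by parts (the pieces quadratic in $\nabla Q$ vanishing by antisymmetry of $\omega$), the terms $2\int_{Q_t}(Q\omega-\omega Q):\D Q\,\phi$ and $-2\int_{Q_t}(\omega Q-Q\omega):\nabla Q\,\nabla\phi$; and from $-f_{\rm{bulk}}(Q)$, the term $-2\int_{Q_t}\nabla(f_{\rm{bulk}}(Q))\cdot\nabla Q\,\phi$.

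Adding the two identities, the two copies of $T_1$ cancel, leaving $2\int_{Q_t}(\nabla Q\otimes\nabla Q):\u\otimes\nabla\phi$; and $T_2$ cancels against $2\int_{Q_t}(Q\omega-\omega Q):\D Q\,\phi$, because $[Q,f_{\rm{bulk}}(Q)]=0$ makes $Q(\D Q-f_{\rm{bulk}}(Q))-(\D Q-f_{\rm{bulk}}(Q))Q=[Q,\D Q]$ antisymmetric, so its contraction with $\nabla\u$ equals its contraction with the vorticity $\omega$, which a pointwise algebraic identity matches to $(Q\omega-\omega Q):\D Q$ — this is the content of \eqref{can2}. It then remains to convert the dissipation term $2\int_{Q_t}|\nabla^2 Q|^2\phi$ into $2\int_{Q_t}|\D Q|^2\phi$: two integrations by parts give $\int_{Q_t}|\nabla^2 Q|^2\phi=\int_{Q_t}|\D Q|^2\phi+\int_{Q_t}\D Q\cdot\nabla Q\cdot\nabla\phi+\tfrac12\int_{Q_t}|\nabla Q|^2\D\phi$, and one further integration by parts gives $\int_{Q_t}\D Q\cdot\nabla Q\cdot\nabla\phi=\tfrac12\int_{Q_t}|\nabla Q|^2\D\phi-\int_{Q_t}(\nabla Q\otimes\nabla Q):\nabla^2\phi$; substituting, the various $|\nabla Q|^2\D\phi$ contributions combine into the $\D\phi$ part of $\int_{Q_t}(|\u|^2+|\nabla Q|^2)(\partial_t+\D)\phi$ and what remains is precisely $2\int_{Q_t}(\nabla Q\otimes\nabla Q-|\nabla Q|^2 I_3):\nabla^2\phi$. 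Collecting all terms produces \eqref{lei1}.

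The step requiring the most care is this final bookkeeping. The cancellation of the rotational term against $T_2$ rests on the algebraic identity \eqref{can2} — equivalently, on $f_{\rm{bulk}}(Q)$ commuting with $Q$ and on the antisymmetry of $[Q,\D Q]$ — and the passage $|\nabla^2 Q|^2\to|\D Q|^2$ generates several lower-order boundary terms ($\D Q\cdot\nabla Q\cdot\nabla\phi$ and $|\nabla Q|^2\D\phi$) whose coefficients must recombine with exactly the right signs to form the clean $(\nabla Q\otimes\nabla Q-|\nabla Q|^2 I_3):\nabla^2\phi$ structure of \eqref{lei1} with no spurious residue; one also has to verify that the transport term in the $\nabla Q$-identity, after integration by parts, reproduces $T_1$ so that it cancels its counterpart in the velocity identity rather than doubling it. Everything else is routine integration by parts, justified by the $C^\infty$ regularity of $(\u,Q)$ and the compact support of $\phi$ in $\Omega\times(0,t]$.
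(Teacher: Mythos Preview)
Your proof is correct and follows essentially the same route as the paper: multiply \eqref{e1}$_2$ by $\u\phi$, multiply the differentiated \eqref{e1}$_1$ by $\nabla Q\,\phi$, integrate by parts, and add, using the cancellation \eqref{can2} for the rotational/stress cross-terms. The only cosmetic difference is that the paper integrates by parts in the $\partial_\alpha$-index to obtain $|\Delta Q|^2$ directly (plus the leftover term $-\int\Delta Q\cdot\nabla Q\,\nabla\phi$, handled by the identity \eqref{cancel1}), whereas you first produce $|\nabla^2 Q|^2$ and then convert; your explicit observation that $[Q,f_{\rm{bulk}}(Q)]=0$ is exactly what makes the paper's application of \eqref{cancel2} (stated for $H=\Delta Q-f_{\rm{bulk}}(Q)$) match the $\Delta Q$-term coming from \eqref{2}.
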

\begin{proof}
Using $\Dv\u=0$, we multiply the momentum equation \eqref{e1}$_2$ by $\u\phi$, integrate 
the resulting equation over $\O$,  and apply integration by parts to obtain
\begin{equation}\label{1}
\begin{split}
&\f12\f{d}{dt}\int_{\O}|\u|^2\phi \,dx+\int_{\O}|\nabla\u|^2\phi \,dx\\
&=\f12\int_{\O}|\u|^2(\partial_t\phi+\D\phi)dx+\f12\int_{\O}(|\u|^2+2P)\u\cdot\nabla\phi \,dx
-\int_{\O} (\u\cdot\nabla) Q\cdot \D Q\phi\,dx\\
&\quad-\int_{\O}(Q(\D Q-f_{\rm{bulk}}(Q))-(\D Q-f_{\rm{bulk}}(Q)) Q):\nabla\u\phi \,dx\\
&\quad-\int_{\O}(Q(\D Q-f_{\rm{bulk}}(Q))-(\D Q-f_{\rm{bulk}}(Q)) Q):\u\otimes\nabla\phi \,dx.
\end{split}
\end{equation}
Taking a spatial derivative of the equation of $Q$ \eqref{e1}$_1$ yields
\begin{equation*}
\partial_t\partial_{\alpha}Q+\u\cdot\nabla \partial_{\alpha}Q+\partial_{\alpha}\u\cdot\nabla Q+\partial_{\alpha}\big(Q\omega-\omega Q\big)
=\D \partial_{\alpha}Q-\partial_{\alpha}(f_{\rm{bulk}}(Q)).
\end{equation*}
Using again $\Dv\u=0$, we multiply the equation above by $\partial_{\alpha}Q\phi$, integrate 
the resulting equation over $\O$, apply integration by parts, and sum over $\alpha$ to obtain
\begin{equation}\label{2}
\begin{split}
&\f12\f{d}{dt}\int_{\O}|\nabla Q|^2\phi \,dx+\int_{\O}|\D Q|^2\phi \,dx\\
&=\f12\int_{\O}|\nabla Q|^2\partial_t\phi\,dx+\int_{\O} (\u\cdot\nabla) Q\cdot(\D Q\phi+\nabla Q\nabla\phi) \,dx\\
&\quad-\int_{\O}\big(\omega Q-Q \omega\big):(\D Q\phi+\nabla Q\nabla\phi) \,dx\\
&\quad-\int_{\O}\D Q\cdot \nabla Q \nabla\phi\, dx
-\int_{O}\nabla(f_{\rm{bulk}}(Q))\cdot \nabla Q\phi dx.
\end{split}
\end{equation}
By direct calculations, there hold
\begin{eqnarray}\label{cancel1}
&&-\int_{\O}\Delta Q\cdot\nabla Q\nabla\phi\,dx\nonumber\\
&&=\int_{\O}\frac12|\nabla Q|^2\Delta\phi\,dx
+\int_{\O} (\nabla Q\otimes\nabla Q-|\nabla Q|^2 I_3):\nabla^2\phi \,dx,
\end{eqnarray}
and
\begin{equation} \label{cancel2}
\begin{split}
&\int_{\O}\big(\omega Q -Q \omega\big)(\D Q-f_{\rm{bulk}}(Q))\phi \, dx\\
&=\int_{\O}\big(Q(\D Q-f_{\rm{bulk}}(Q)) -Q(\D Q-f_{\rm{bulk}}(Q)))\big):\nabla\u\phi \,dx.
\end{split}
\end{equation}
Hence, by adding \eqref{1} and \eqref{2} together and applying \eqref{cancel1} and \eqref{cancel2}, we have
\begin{equation*}
\begin{split}
&\f12\f{d}{dt}\int_{\O}\big(|\u|^2+|\nabla Q|^2\big)\phi \,dx
+\int_{\O}\big(|\nabla\u|^2+|\D Q|^2\big)\phi \,dx\\
&=\f12\int_{\O}\big(|\u|^2+|\nabla Q|^2\big)(\partial_t+\D)\phi\,dx
+\f12\int_{\O}(|\u|^2+2P)\u\cdot\nabla\phi \,dx\\
&+\int_{\O}(\u\cdot\nabla) Q\cdot \nabla Q\nabla\phi \,dx
-\int_{\O}(Q(\D Q-f_{\rm{bulk}}(Q))-(\D Q-f_{\rm{bulk}}(Q)) Q):\u\otimes\nabla\phi \,dx\\
&-\int_{\O}\big(\omega Q-Q\omega\big):\nabla Q \nabla \phi \,dx
-\int_{\O}\nabla(f_{\rm{bulk}}(Q))\cdot\nabla Q\phi \,dx\\
&+\int_{\O} (\nabla Q\otimes\nabla Q-|\nabla Q|^2 I_3):\nabla^2\phi \,dx.
\end{split}
\end{equation*}
This, after integrating over $[0,t]$, yields the  local energy inequality \eqref{lei1}.
\end{proof}

We close this section by giving a proof of the identity \eqref{free-div2}. More precisely, we have
\begin{Lemma}\label{freed2} For $\Omega=\mathbb R^3$ or $\mathbb T^3$, if $Q^1, Q^2\in H^2(\Omega, \mathcal{S}_0^{(3)})$, then 
\begin{equation}\label{free-div3} 
{\rm{div}}^2(Q^1(\Delta Q^2-f_{\rm{bulk}}(Q^2))-(\Delta Q^2-f_{\rm{bulk}}(Q^2))Q^1)=0\ \ {\rm{in}}\ \ \Omega, 
\end{equation}
in the sense of distributions.
\end{Lemma}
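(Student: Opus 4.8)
The plan is to prove the identity componentwise, exploiting the antisymmetry of the matrix $M := Q^1(\Delta Q^2 - f_{\rm bulk}(Q^2)) - (\Delta Q^2 - f_{\rm bulk}(Q^2))Q^1$ together with the symmetry of $Q^1, Q^2$ and the fact that $f_{\rm bulk}(Q^2)$ is symmetric. Writing $A := \Delta Q^2 - f_{\rm bulk}(Q^2)$, which is symmetric, the matrix $M = Q^1 A - A Q^1$ is antisymmetric: $M^\top = A^\top (Q^1)^\top - (Q^1)^\top A^\top = A Q^1 - Q^1 A = -M$. Hence $M_{\alpha\beta} = -M_{\beta\alpha}$ and $\partial_\alpha \partial_\beta M_{\alpha\beta} = \partial_\beta \partial_\alpha M_{\alpha\beta} = -\partial_\alpha\partial_\beta M_{\beta\alpha}$, so after relabeling indices $\partial_\alpha\partial_\beta M_{\alpha\beta} = 0$ purely formally — but this only works directly if $M \in H^1$ so that the second distributional derivative makes sense as claimed. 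Since $Q^1 \in H^2 \hookrightarrow$ functions with $\Delta Q^2 \in L^2$ and $f_{\rm bulk}(Q^2) \in L^2$ (using the pointwise bounds on $f_{\rm bulk}$ established later, or boundedness on $\mathbb{T}^3$; for $\mathbb{R}^3$ one uses the $L^\infty$ bound on $Q$), the product $Q^1 A$ is only in $L^1_{\rm loc}$ in general, so the expression ${\rm div}^2 M$ must be interpreted via a double integration by parts against test functions, and the antisymmetry cancellation must be carried out at that level.

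So the actual argument is: fix $\psi \in C_0^\infty(\Omega)$ and show $\int_\Omega M_{\alpha\beta}\, \partial_\alpha\partial_\beta \psi\, dx = 0$. First I would establish this for smooth $Q^1, Q^2$, where the computation is the formal one above: since $\partial_\alpha\partial_\beta\psi$ is symmetric in $(\alpha,\beta)$ and $M_{\alpha\beta}$ is antisymmetric in $(\alpha,\beta)$, the contraction $M_{\alpha\beta}\partial_\alpha\partial_\beta\psi$ vanishes identically as a pointwise function, hence its integral is zero. Then I would pass to the general $H^2$ case by approximation: choose $Q^1_k, Q^2_k \in C^\infty$ (for $\mathbb{T}^3$, mollification; for $\mathbb{R}^3$, mollification preserving the relevant $L^\infty$ bounds) with $Q^i_k \to Q^i$ strongly in $H^2$. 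Then $\Delta Q^2_k \to \Delta Q^2$ in $L^2$, and $f_{\rm bulk}(Q^2_k) \to f_{\rm bulk}(Q^2)$ in $L^2$ (using local Lipschitz bounds of $f_{\rm bulk}$ together with uniform pointwise bounds on $Q^2_k$ — this is where one invokes the $L^\infty$ control), while $Q^1_k \to Q^1$ in $H^2 \hookrightarrow L^\infty$ on bounded sets, so $M_k \to M$ in $L^1(\supp\psi)$. Passing to the limit in $\int M_k : \partial^2\psi = 0$ gives the claim.

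The main obstacle — really the only subtle point — is the integrability/convergence of the $f_{\rm bulk}(Q^2)$ term and of the product $Q^1 \Delta Q^2$. On $\mathbb{T}^3$, everything is on a compact domain and $f_{\rm bulk}$ need not be globally bounded a priori (the Ball–Majumdar potential is singular), but under the hypotheses of the lemma one only needs $Q^1, Q^2 \in H^2$, and $f_{\rm bulk}$ is assumed to be evaluated at tensors for which it is finite; I would note that in the application $Q$ is strictly physical, so $f_{\rm BM}(Q)$ is smooth and bounded on the relevant region, and the approximation is legitimate. On $\mathbb{R}^3$, the compact support of $\psi$ localizes everything, so only local integrability is needed: $Q^1 \in H^2(\mathbb{R}^3) \hookrightarrow L^\infty_{\rm loc}$, $\Delta Q^2 \in L^2_{\rm loc}$, and $f_{\rm bulk}(Q^2)$ polynomial in $Q^2 \in L^\infty_{\rm loc} \cap H^2_{\rm loc}$ hence in $L^2_{\rm loc}$, so $M \in L^1_{\rm loc}$ and the mollification argument goes through. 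I expect the write-up to be short: state that $M$ is antisymmetric, observe the pointwise cancellation $M_{\alpha\beta}\partial_\alpha\partial_\beta\psi \equiv 0$ for smooth data, and invoke density of $C^\infty$ in $H^2$ (with the $L^\infty$ bound carried along) to conclude in the distributional sense.
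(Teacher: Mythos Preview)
Your core observation --- that $M = Q^1 A - A Q^1$ is antisymmetric while $\partial_\alpha\partial_\beta\psi$ is symmetric, so their contraction vanishes --- is exactly the paper's argument. However, the approximation step you propose is unnecessary: the antisymmetry $M_{\alpha\beta}(x) = -M_{\beta\alpha}(x)$ is a pointwise algebraic fact that holds for a.e.\ $x$ directly from the symmetry of $Q^1(x)$ and $A(x)$, with no regularity on $M$ beyond $L^1_{\rm loc}$ required. Thus $M_{\alpha\beta}(x)\,\partial_\alpha\partial_\beta\psi(x) = 0$ pointwise a.e., and the distributional pairing $\int_\Omega M_{\alpha\beta}\,\partial_\alpha\partial_\beta\psi\,dx$ vanishes immediately --- the paper does precisely this in three lines, without any mollification or density argument.
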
 
\begin{proof} For any $\phi\in C_0^\infty(\Omega)$, we see that
\begin{equation*}
\begin{split}
&{\rm{div}}^2(Q^1(\Delta Q^2-f_{\rm{bulk}}(Q^2))-(\Delta Q^2-f_{\rm{bulk}}(Q^2))Q^1)(\phi)\\
&=\int_{\Omega} \big(Q^1_{\alpha\gamma}(\Delta Q^2_{\gamma\beta}-f_{\rm{bulk}}(Q^2)_{\gamma\beta})
-(\Delta Q^2_{\alpha\gamma}-f_{\rm{bulk}}(Q^2)_{\alpha\gamma}) Q^1_{\gamma\beta}\big)\frac{\partial^2 \phi}{\partial x_\alpha\partial x_\beta} \,dx.
\end{split}
\end{equation*}
Set 
$$A_{\alpha\beta}=Q^1_{\alpha\gamma}(\Delta Q^2_{\gamma\beta}-f_{\rm{bulk}}(Q^2)_{\gamma\beta})- (\Delta Q^2_{\alpha\gamma}-f_{\rm{bulk}}(Q^2)_{\alpha\gamma})Q^1_{\gamma\beta},\  \ \forall 1\le\alpha,\beta\le 3,$$
and
$$B_{\alpha\beta}=\frac{\partial^2 \phi}{\partial x_\alpha\partial x_\beta}, \ \forall 1\le\alpha,\beta\le 3.$$
Since $Q^1$ and $Q^2$ are symmetric, it is easy to check that
$$A_{\alpha\beta}=-A_{\beta\alpha}, \ B_{\alpha\beta}=B_{\beta\alpha}, \ \forall 1\le\alpha,\beta\le 3.$$
Hence \eqref{free-div3} follows.
\end{proof}

\section{Global existence of suitable weak solutions}

This section is devoted to the construction of suitable weak solutions to the Beris-Edwards system \eqref{e1}. The idea
is motived by the ``retarded mollification technique" originally due to \cite{S} and \cite{CKN} in the context of
Navier-Stokes equations. Since the procedure for Ball-Majumdar potential $F_{\rm{BM}}(Q)$ is
somewhat different from that for Landau-De Gennes potential $F_{\rm{LdG}}(Q)$, we will describe
them in two separate subsections.

We explain the construction of suitable weak solutions in the spirit of \cite{CKN}. 
For $f:\mathbb R^4\to\mathbb R$ and $0<\theta<1$, define the ``retarded mollifier" $\Psi_{\theta}(f)$ of $f$
by
$$\Psi_{\theta}[f](x,t)=\f{1}{\theta^4}\int_{\R^4}\eta\left(\f{y}{\theta}, \f{\tau}{\theta}\right)\tilde{f}(x-y, t-\tau)\,dyd\tau,$$
where
$$\tilde{f}(x,t)=\begin{cases} f(x,t) & t \ge 0,\\
0 & t<0,
\end{cases}
$$
and the mollifying function $\eta\in C^\infty_0(\mathbb R^4)$ satisfies
$$\begin{cases}\displaystyle\eta\ge0\quad\textrm{and}\quad \int_{\R^4}\eta\, dxdt=1,\\
\supp\ \eta\subset \Big\{(x,t): |x|^2<t,\quad 1<t<2\Big\}.
\end{cases}
$$ 
It follows from Lemma A.8 in \cite{CKN} that for $\theta\in (0,1]$ and $0<T\le\infty$,
\begin{subequations}\label{3}
\begin{align}
\Dv\Psi_{\theta}[\u]&=0\quad\textrm{if}\quad \Dv\u=0,\nonumber\\
\sup_{0\le t\le T}\int_{\R^3}|\Psi_{\theta}[\u]|^2(x,t)\,dx&\le C\sup_{0\le t\le T}\int_{\R^3}|\u|^2(x,t)\,dx\nonumber\\
\int_{\R^3\times [0,T]}|\nabla\Psi_{\theta}[\u]|^2(x,t)\,dxdt&\le C\int_{\R^3\times [0,T]}|\nabla\u|^2(x,t)\,dxdt.\nonumber
\end{align}
\end{subequations}

Now we proceed to find the existence of suitable weak solutions of \eqref{e1} and \eqref{IC} as follows.
\subsection{The Landau-De Gennes potential \fbox{$F_{\rm{bulk}}(Q)=F_{\rm{LdG}}(Q)$ and $\Omega=\mathbb R^3$}}

With the mollifier $\Psi_{\theta}[\u]\in C^\infty(\R^4)$, 
we introduce an approximate version of the Beris-Edwards system \eqref{e1}, namely,
\begin{equation}\label{e2}
\begin{cases}
&\partial_tQ^\theta+\u^\theta\cdot\nabla \Psi_\theta[Q^\theta]-\omega^\theta\Psi_{\theta}[Q^\theta]+\Psi_{\theta}[Q^\theta]\omega^\theta
=\D Q^\theta-f_{\rm{LdG}}(Q^\theta),\\
&\partial_t\u^\theta+\Psi_{\theta}[\u^\theta]\cdot\nabla\u^\theta+\nabla P^\theta\\
&=\D\u^\theta-\nabla (\Psi_\theta[Q^\theta])\cdot\big(\D Q^\theta-f_{\rm{LdG}}(Q^\theta)\big)\\
&\ \ \ +\Dv\Big(\Psi_{\theta}[Q^\theta](\D Q^\theta-f_{\rm{LdG}}(Q^\theta))-(\D Q^\theta - f_{\rm{LdG}}(Q^\theta))\Psi_{\theta}[Q^\theta]\Big),\\
&\Dv\u^\theta=0.
\end{cases} \ {\rm{in}}\ Q_T
\end{equation}
subject to the initial condition \eqref{IC}.  Here $\omega^\theta=\omega(\u^\theta)=\frac{\nabla \u^\theta-(\nabla\u^\theta)^\top}{2}$.

The idea behind the construction of suitable weak solutions to \eqref{e2} is as follows. For a fixed large $N\ge 1$, 
set $\theta=\frac{T}{N}\in (0, 1]$,
we want to find $\u=\u^\theta, P=P^\theta$, and $Q=Q^\theta$ solving \eqref{e2} and \eqref{IC}. 
Since $\Psi_\theta[\u]$ and $\Psi_\theta[Q]$ are smooth, and their values at time $t$ depend only
on the values of $\u$ and $Q$ at times prior to $t-\theta$, solving \eqref{e2}  and \eqref{IC}
involves iteratively solving \eqref{e2} in the interval $[m\theta, (m+1)\theta]$,
subject to the initial condition
$$(\u, Q)\big|_{t=m\theta}=(u^\theta, Q^\theta) (\cdot, m\theta)\ {\rm{in}}\  \mathbb R^3,$$
for $0\le m\le N-1$. This amounts to solving a system that 
couples a semi-linear parabolic-like
equation for $Q$ and a Stokes-like equation for $\u$, in which all the coefficient functions are given 
smooth functions. 

We can verify, by the classical Faedo-Garlekin method,  the existence of $(\u^\theta, Q^\theta, P^\theta)$ inductively on each time interval $(m\theta, (m+1)\theta)$ for all $0\le m\le N-1$.
Indeed for $m=0$, according to the definition of $\Psi_\theta$, $\Psi_\theta(\u^\theta)=\Psi_\theta(Q^\theta)=0$, 
and the system \eqref{e2} reduces to a linear system
\begin{equation}\label{e0}
\begin{cases}
\partial_tQ^\theta=\D Q^\theta-f_{\rm{LdG}}(Q^\theta)\\
\partial_t\u^\theta+\nabla P^\theta=\D\u^\theta\\
\Dv\u^\theta=0\\
(\u^\theta, Q^\theta)|_{t=0}=(\u_0, Q_0)
\end{cases}
\end{equation}
in $\R^3\times [0,\theta]$. For the system \eqref{e0}, $Q^\theta$ and $\u^\theta$ are decouple,
and  $\u^\theta$ can be found according to the standard theory of Stokes equations, 
while the equation of $Q^\theta$ is a semi-linear parabolic equation which can be solved by the standard method
for parabolic equations. 

Suppose now that the system \eqref{e2} has been solved for some $0\le k< N-1$. We are going to solve the system \eqref{e2}
\begin{equation}\label{e3.0}
\begin{cases}
&\partial_tQ_{\alpha\beta}+\u\cdot\nabla \tilde{Q}_{\alpha\beta}-\omega_{\alpha\gamma}\tilde{Q}_{\gamma\beta}+\tilde{Q}_{\alpha\gamma}\omega_{\gamma\beta}=\D Q_{\alpha\beta}-f_{LdG}(Q)_{\alpha\beta}\\
&\partial_t\u_{\alpha}+\tilde{\u}\cdot\nabla\u_{\alpha}+\partial_{\alpha} P=\D\u_{\alpha}-\partial_\alpha \tilde{Q}_{\beta\gamma}(\D Q-f_{LdG}(Q))_{\beta\gamma}\\
&\quad+\partial_\beta\Big(\tilde{Q}_{\alpha\gamma}(\D Q-f_{LdG}(Q))_{\gamma\beta}-(\D Q-f_{LdG}(Q))_{\alpha\gamma}\tilde{Q}_{\gamma\beta}\Big)\\
&\Dv\u=0.
\end{cases}
\end{equation}
in the time interval $[k\theta, (k+1)\theta]$ with the initial data 
\begin{equation}\label{IC1}
(\u, Q)|_{t=k\theta}=(\u^\theta, Q^\theta)(\cdot,k\theta) \quad\textrm{in}\quad \R^3,
\end{equation}
and
$$\tilde{Q}=\Psi_\theta[Q^\theta]\quad\textrm{and}\quad \tilde{\u}=\Psi_{\theta}[\u^\theta].$$ 
Note that $\tilde{\u}$ and $\tilde{Q}$ are smooth functions in $[k\theta, (k+1)\theta]\times \R^3$.

The existence of $(\u, Q)$ in \eqref{e3.0} may be solved by using the Faedo-Galerkin method. 
Indeed for a pair of smooth test functions $(\psi, \phi)\in H^2(\R^3, \mathcal{S}_0^{(3)})
\times {\bf V}$, the system \eqref{e3.0} turns to be
\begin{equation}\label{e4a}
\begin{split}
&\f{d}{dt}\int_{\R^3}(\nabla Q, \nabla\psi)\,dx-\int_{\R^3}(\u\cdot\nabla\tilde{Q}, \D\psi)\,dx-\int_{\R^3}(-\omega_{\alpha\gamma}\tilde{Q}_{\gamma\beta}+\tilde{Q}_{\alpha\gamma}\omega_{\gamma\beta}, \D\psi_{\alpha\beta})\,dx\\
&\quad=-\int_{\R^3}(\D Q_{\alpha\beta}-f_{LdG}(Q)_{\alpha\beta}, \D\psi_{\alpha\beta})\,dx,
\end{split}
\end{equation}
and
\begin{equation}\label{e4b}
\begin{split}
&\f{d}{dt}\int_{\R^3}(\u, \phi)\,dx+\int_{\R^3}(\tilde{\u}\cdot\nabla\u, \phi)\,dx+\int_{\R^3}(\nabla\u,\nabla\phi)\,dx\\
&\quad=-\int_{\R^3}\Big(\partial_\alpha \tilde{Q}_{\beta\gamma}(\D Q-f_{LdG}(Q))_{\beta\gamma},\phi_\alpha\Big)\,dx\\
&\qquad-\int_{\R^3}\left(\Big(\tilde{Q}_{\alpha\gamma}(\D Q-f_{LdG}(Q))_{\gamma\beta}-(\D Q-f_{LdG}(Q))_{\alpha\gamma}\tilde{Q}_{\gamma\beta}\Big), \partial_\beta\phi_\alpha\right)\,dx,
\end{split}
\end{equation}
in the sense of distributions.
The system of first order ODE equations \eqref{e4a}-\eqref{e4b} can be solved when the test function $(\psi, \phi)$ are taken to be the basis of 
$H^2(\R^3,\mathcal{S}_0^{(3)})\times {\bf V}$ up to a short time interval $[k\theta, k\theta+T_0]$. 
Performing the energy estimate for \eqref{e3.0} as for the original system, we get that for $k\theta\le t\le T_0$,
\begin{equation*}
\begin{split}
&\sup_{t\ge k\theta}\int_{\R^3}\left(|\u^\theta|^2+|\nabla Q^\theta|^2+F_{\rm{LdG}}(Q^\theta)\right)\,dx
+\int_{k\theta}^t\int_{\R^3}\Big(|\nabla\u^\theta|^2+|\D Q-f_{\rm{LdG}}(Q^\theta)|^2\Big)\,dxds\\
&\quad\le \int_{\R^3}\left(|\u^\theta|^2+|\nabla Q^\theta|^2+F_{\rm{LdG}}(Q^\theta)\right)(x, k\theta)\,dx.
\end{split}
\end{equation*}
Hence $T_0$ can be extended up to $\theta$.

Let $(\u^\theta, P^\theta, Q^\theta)$ be the global weak solution
of  \eqref{e2} and \eqref{IC}  in $Q_T$. Then
$$\u^\theta\in L^\infty_tL^2_x\cap L^2_tH^1_x(Q_T), \ Q^\theta\in L^\infty_tH^1_x\cap L^2_tH^2_x(Q_T),
\ P^\theta\in L^2(Q_T).$$
Observe that
\begin{equation*}
\begin{split}
&\big(\omega^\theta\Psi_{\theta}[Q^\theta]-\Psi_{\theta}[Q^\theta]\omega^\theta\big): (\D Q^\theta-f_{\rm{LdG}}(Q^\theta))\\
&=-\big(\Psi_{\theta}[Q^\theta](\D Q^\theta-f_{\rm{LdG}}(Q^\theta))-(\D Q^\theta-f_{\rm{LdG}}(Q^\theta))\Psi_{\theta}[Q^\theta]\big):\nabla\u^\theta.
\end{split}
\end{equation*}
Hence, by calculations similar to Lemma \ref{L0}, we deduce that $(\u^\theta, Q^\theta)$  satisfies
the global energy inequality: for $0\le t\le T$, 
\begin{eqnarray}\label{4}
&&E(\u^\theta,Q^\theta)(t)
+\int_{\R^3\times [0,t]}\big(|\nabla\u^\theta|^2+|\D Q^\theta-f_{\rm{LdG}}(Q^\theta)|^2\big)\,dxdt\nonumber\\
&&\le E(\u^\theta,Q^\theta)(0)=\int_{\R^3}(\frac12|\u_0|^2+\frac12|\nabla Q_0|^2+F_{\rm{LdG}}(Q_0))(x,t)\,dx.
\end{eqnarray} 
Direct calculations show that
\begin{equation*}
\begin{split}
&\int_{\R^3}\Delta Q^\theta\cdot f_{\rm{LdG}}(Q^\theta)\,dx\\
&=-a\int_{\R^3}|\nabla Q^\theta|^2\,dx-c\int_{\R^3}\big(|\nabla Q^\theta|^2|Q^\theta|^2+\f12|\nabla\tr((Q^\theta)^2)|^2\big)\,dx\\
&\quad+b\int_{\R^3}\nabla\big((Q^\theta)^2-\f{\tr((Q^\theta)^2)}{3}I_3\big)\cdot\nabla Q^\theta\,dx\\
&\le -\f{c}{4}\int_{\R^3}\big(|\nabla Q^\theta|^2|Q^\theta|^2+\f12|\nabla\tr((Q^\theta)^2)|^2\big)\,dx+C(a,b,c)\int_{\R^3}|\nabla Q^\theta|^2\,dx.
\end{split}
\end{equation*}
This, combined with the assumption $c>0$ and estimate \eqref{4}, gives
\begin{equation}\label{4a}
\begin{split}
&\f{d}{dt}\int_{\R^3}(|\u^\theta|^2+|\nabla Q^\theta|^2+F_{\rm{LdG}}(Q^\theta))(x,t)\,dx+2\int_{\R^3}\big(|\nabla\u^\theta|^2+|\D Q^\theta|^2\big)\,dx\\
&\ +c\int_{\R^3}\big(|\nabla Q^\theta|^2|Q^\theta|^2+\f12|\nabla\tr((Q^\theta)^2)|^2\big)\,dx\\
&\le C(a,b,c)\int_{\R^3}|\nabla Q^\theta|^2\,dx.
\end{split}
\end{equation} 
Therefore we deduce from \eqref{4a} and Gronwall's inequality that
\begin{equation}\label{5}
\begin{split}
&\sup_{0\le t\le T}\int_{\R^3}(|\u^\theta|^2+|\nabla Q^\theta|^2+F_{\rm{LdG}}(Q^\theta))(x,t)\,dx\\
&\qquad+\int_{\R^3\times [0,T]}\big(|\nabla\u^\theta|^2+|\D Q^\theta|^2\big)\,dxdt\\
&\le C(a,b,c,T) \big(\|{\u}_0\|_{L^2(\R^3)}^2+\|Q_0\|_{H^1(\R^3)}^2\big).
\end{split}
\end{equation}
From \eqref{de gennes1}, we know that there exists a $M_0>0$, depending on $a,b,c$, such that
$$F_{\rm{LdG}}(Q)\ge \frac{c}2 |Q|^4, \ \forall Q\in \mathcal{S}_0^{(3)} \ {\rm{with}}\ |Q|\ge M_0.$$
This, combined with \eqref{5} and $F_{\rm{LdG}}(Q)\ge 0$, implies that
\begin{equation}\label{5.1}
\begin{split}
&\sup_{0\le t\le T}\int_{\{x\in\R^3: \ |Q^\theta(x,t)|\ge M_0\}}  |Q^\theta(x,t)|^4\,dx\\
&\le \frac{2}{c}\sup_{0\le t\le T}\int_{\R^3} F_{\rm{LdG}}(Q^\theta)(x,t)\,dx\\
&\le C(a,b,c,T) \big(\|{\u}_0\|_{L^2(\R^3)}^2+\|Q_0\|_{H^1(\R^3)}^2\big).
\end{split}
\end{equation}
From \eqref{5.1},  we can conclude that for any compact set $K\subset\R^3$,
\begin{equation}\label{5.2}
\begin{split}
&\sup_{0\le t\le T}\int_{K}  |Q^\theta(x,t)|^4\,dx\\
&\le \sup_{0\le t\le T}\Big\{\int_{\{x\in K:\ |Q^\theta(x,t)|\le M_0\}}  |Q^\theta(x,t)|^4\,dx
+\int_{\{x\in K:\ |Q^\theta(x,t)|> M_0\}}  |Q^\theta(x,t)|^4\,dx\Big\}\\
&\le |K|M_0^4+C(a,b,c,T) \big(\|{\u}_0\|_{L^2(\R^3)}^2+\|Q_0\|_{H^1(\R^3)}^2\big).
\end{split}
\end{equation} 
From \eqref{5} and \eqref{5.2}, we have that $\u^\theta$ is uniformly bounded in $L^2_tH^1_x(\mathbb R^3\times [0,T])$,
$Q^\theta$ is uniformly bounded in $L^2_tH^2_x(K\times [0,T])$ for any compact set $K\subset\R^3$, and
$\nabla Q^\theta$ is uniformly bounded in $L^2_tH^1_x(\R^3\times [0,T])$.
Therefore, after passing to a subsequence, we may assume that as $\theta\to 0$ (or equivalently $N\to \infty$), there exist
$\u\in L^\infty_tL^2_x\cap L^2_tH^1_x(\R^3\times [0,T])$, $Q\in \displaystyle\cap_{R>0} L^\infty_t L^4_x(B_R\times [0,T])$,
with $\nabla Q\in L^\infty_tL^2_x\cap L^2_tH^1_x(\R^3\times [0,T])$, such that 
\begin{equation}\label{6}
\begin{cases}
Q^\theta\rightharpoonup Q & {\rm{in}}\ \ \ L^2 ([0,T], L^2(\R^3)),\\
\nabla Q^\theta\rightharpoonup \nabla Q & \textrm{in}\quad L^2([0,T], H^1(\R^3)),\\
\u^\theta\rightharpoonup \u & \textrm{in}\quad L^2([0,T], H^1(\R^3)).
\end{cases}
\end{equation}
Hence by the lower semicontinuity and \eqref{4} we have that
\begin{eqnarray}\label{6.1}
&&E(\u,Q)(t)
+\int_{\R^3\times [0,t]}\big(|\nabla\u|^2+|\D Q-f_{\rm{LdG}}(Q)|^2\big)\,dxdt\nonumber\\
&&\le E(\u,Q)(0)=\int_{\R^3}(\frac12|\u_0|^2+\frac12|\nabla Q_0|^2+F_{\rm{LdG}}(Q_0))(x,t)\,dx
\end{eqnarray} 
holds for $0\le t\le T$.

Now we want to estimate the pressure function $P^\theta$. 
Taking divergence of \eqref{e2}$_2$ gives
\begin{equation}\label{pressure}
\begin{split}
-\D P^\theta&=\Dv^2(\Psi_{\theta}[\u^\theta]\otimes\u^\theta)+\Dv\big(\nabla(\Psi_{\theta}[Q^\theta])\cdot(\D Q^\theta-f_{\rm{LdG}}(Q^\theta))\big)   \\
&\quad-\Dv^2\big(\Psi_{\theta}[Q^\theta](\D Q^\theta -f_{\rm{LdG}}(Q^\theta) )-(\D Q^\theta-f_{\rm{LdG}}(Q^\theta)) \Psi_{\theta}[Q^\theta]\big)\\
&=\Dv^2(\Psi_{\theta}[\u^\theta]\otimes\u^\theta)+\Dv\big(\nabla(\Psi_{\theta}[Q^\theta])\cdot(\D Q^\theta-f_{\rm{LdG}}(Q^\theta))\big)  \ \ {\rm{in}}\ \R^3.
\end{split}
\end{equation}
Here we have used in the last step the fact that
$$\Dv^2\big(\Psi_{\theta}[Q^\theta](\D Q^\theta-f_{\rm{LdG}}(Q^\theta))-(\D Q^\theta-f_{\rm{LdG}}(Q^\theta)) \Psi_{\theta}[Q^\theta]\big)=0\ \ {\rm{in}}\ \ \R^3,$$
which follows from \eqref{free-div2}.

For $P^\theta$, we claim that $P^\theta$ in $L^{\f53}(\R^3\times [0,T])$ and
\begin{equation}\label{pestimate}
\big\|P^\theta\big\|_{L^{\f53}(\R^3\times [0,T])}\le C\big(a,b,c,T, \|{\u}_0\|_{L^2(\R^3)}, \|Q_0\|_{H^1(\R^3)}\big),
\ \forall \theta\in (0,1].
\end{equation}
To see this, first observe that \eqref{5}  implies 
$\nabla(\Psi_\theta[Q^\theta])\in L^\infty_tL^2_x\cap L^2_tH^1_x(\R^3\times [0,T])$.
Hence by the Sobolev interpolation inequality we have that 
\begin{equation*}
\begin{split}
\big\|\nabla(\Psi_\theta[Q^\theta])\big\|_{L^{10}_tL^{\frac{30}{13}}_x(\R^3\times [0,T])}
&\leq C\big\|\nabla(\Psi_\theta[Q^\theta])\big\|_{L^\infty_tL^2_x\cap L^2_tH^1_x(\R^3\times [0,T])}\\
&\leq C\big(a,b,c,T, \|{\u}_0\|_{L^2(\R^3)}, \|Q_0\|_{H^1(\R^3)}\big).
\end{split}
\end{equation*}
By H\"older's inequality we then have that
\begin{equation}\label{6.3}
\begin{split}
&\big\|\nabla(\Psi_\theta[Q^\theta])\cdot(\D Q^\theta-f_{\rm{LdG}}(Q^\theta))\big\|_{L^{\frac53}_tL^{\frac{15}{14}}_x(\R^3\times [0,T])}\\
&\le \big\|\nabla(\Psi_\theta[Q^\theta])\big\|_{L^{10}_tL^{\frac{30}{13}}_x(\R^3\times [0,T])}\big\|\D Q^\theta-f_{\rm{LdG}}(Q^\theta)\big\|_{L^2(\R^3\times [0,T])}\\
&\leq C\big(a,b,c,T, \|{\u}_0\|_{L^2(\R^3)}, \|Q_0\|_{H^1(\R^3)}\big).
\end{split}
\end{equation}
By Calderon-Zygmund's $L^p$-estimate \cite{Stein} \cite{Evans}, we conclude that
$P^\theta \in L^{\f53}([0,T]\times\R^3)$, and 
\begin{equation*}
\begin{split}
&\big\|P^\theta\big\|_{L^{\f53}([0,T]\times\R^3)}\\
&\le C \Big[\big\|\Psi_{\theta}[\u^\theta]\otimes\u^\theta\big\|_{L^\frac53(\R^3\times [0,T])}
+\big\|\nabla(\Psi_\theta[Q^\theta])\cdot(\D Q^\theta-f_{\rm{LdG}}(Q^\theta))\big\|_{L^{\frac53}_tL^{\frac{15}{14}}_x(\R^3\times [0,T])}\Big]\\
&\le C \Big[\big\|\u^\theta\big\|_{L^\frac13(\R^3\times [0,T])}^2
+\big\|\nabla(\Psi_\theta[Q^\theta])\cdot (\D Q^\theta-f_{\rm{LdG}}(Q^\theta))\big\|_{L^{\frac53}_tL^{\frac{15}{14}}_x(\R^3\times [0,T])}\Big]\\
&\le C\big(a,b,c,T, \|{\u}_0\|_{L^2(\R^3)}, \|Q_0\|_{H^1(\R^3)}\big).
\end{split}
\end{equation*}
It follows from \eqref{pestimate} that we may assume that  
there exists  $P\in L^\frac53(\R^3\times [0,T])$ such that as $\theta\to 0$, 
\begin{equation}\label{pconv1}
P^\theta\rightharpoonup P\ \ {\rm{in}}\ \ L^\frac53(\R^3\times [0,T]). 
\end{equation}

From \eqref{e2}$_2$ and the bounds \eqref{5} and \eqref{5.1}, we have that
\begin{equation*}
\begin{split}
&\partial_t \u^\theta=-\Psi_\theta[\u^\theta]\cdot\nabla\u^\theta-\nabla P^\theta+\Delta \u^\theta-\nabla(\Psi_\theta[Q^\theta])
\cdot(\D Q^\theta-f_{\rm{LdG}}(Q^\theta))\\
&\ \ \ \ \ \ \ \ \ -\Dv(\Psi_\theta[Q^\theta](\Delta Q^\theta-f_{\rm{LdG}}(Q^\theta))-(\Delta Q^\theta-f_{\rm{LdG}}(Q^\theta))\Psi_\theta[Q^\theta])\\
&\in L^\frac54(\R^3\times [0,T])+L^\frac53([0,T], W^{-1,\frac53}(\R^3))+\bigcap_{R>0}L^2([0,T], W^{-1, \frac43}(B_R)),
\end{split}
\end{equation*}
and for any $0<R<\infty$,
\begin{equation}\label{ut-estimate1}
\begin{split}
&\Big\|\partial_t \u^\theta\Big\|_{L^\frac54(\R^3\times [0,T])+L^\frac53([0,T], W^{-1,\frac53}(\R^3))+L^2([0,T], W^{-1, \frac43}(B_R))}\\
&\leq C\big(a, b, c, R, T, \|\u_0\|_{L^2(\R^3)}, \|Q_0\|_{H^1(\R^3)}\big), \ \forall \theta\in (0,1].
\end{split}
\end{equation}
Similarly, it follows from \eqref{e2}$_1$ and the bounds \eqref{5} and \eqref{5.1} that 
$\partial_t Q^\theta\in L^\frac53(\R^3\times [0,T])+\bigcap_{R>0}L^2([0,T], L^\frac43(B_R))$, and
\begin{equation}\label{dt-estimate1}
\Big\|\partial_t Q^\theta\Big\|_{L^\frac53(\R^3\times [0,T])+L^2([0,T], L^\frac43(B_R))}
\le C\big(a, b, c, R, T, \|\u_0\|_{L^2(\R^3)}, \|Q_0\|_{H^1(\R^3)}\big), 
\end{equation} 
for all $0<R<\infty$ and $\theta\in (0,1]$.

By \eqref{5}, \eqref{5.1}, \eqref{ut-estimate1}, and \eqref{dt-estimate1}, we can apply Aubin-Lions' compactness Lemma 
(\cite{Temam}) to conclude that
for any $0<R<\infty$,
\begin{equation}\label{l2-conv1}
\big(\u^\theta, Q^\theta, \nabla Q^\theta\big)\to \big(\u, Q, \nabla Q\big)\ \ \ {\rm{in}}\ \ \  L^3(B_R\times [0,T]),  \ \ {\rm{as}}\ \ \theta\to 0.
\end{equation}
On the other hand, it follows from $F_{\rm{LdG}}(Q^\theta )\ge 0$ in $\R^3\times [0,T]$  and  \eqref{5} that 
$$\sup_{0\le t\le T} \int_{\R^3} |\nabla Q^\theta|^2(x,t)\,dx\le C\big(a,b,c,T, \|{\u}_0\|_{L^2(\R^3)}, \|Q_0\|_{H^1(\R^3)}\big).$$
Hence by \eqref{l2-conv1} we also have that for any $1<p_1<6$ and $1<q_1<\frac{10}{3}$, 
\begin{equation}\label{l2-conv1.0}
Q^\theta\to Q \  {\rm{in}}\  L^{p_1} (B_R\times [0,T]); \ \u^\theta\to \u \  \  {\rm{in}}\  L^{p_2} (B_R\times [0,T]) \ \ {\rm{as}}\ \ \theta\to 0.
\end{equation} 

With the convergences \eqref{6}, \eqref{pconv1}, and \eqref{l2-conv1}, it is not hard to show that 
the limit $(\u, P, Q)$ is a weak solution of \eqref{e1} and \eqref{IC}, i.e., it satisfies the system \eqref{e1} and \eqref{IC}
in the sense of distributions (see also \cite{PZ} Proposition 3). We leave the details to interested readers, besides 
pointing out that in the sense of distributions, as $\theta\to 0$, 
$$\nabla P^\theta-\nabla(\Psi_\theta[Q^\theta])\cdot  f_{\rm{LdG}}(Q^\theta)\to\nabla P- \nabla Q\cdot f_{\rm{LdG}}(Q)=\nabla(P-F_{\rm{LdG}}(Q)).$$

To show that $(\u, P, Q)$ is a suitable weak solution of \eqref{e1}, observe that,  as in Lemma \ref{L1}, we can test equations of $\u^\theta$ in \eqref{e2} by $\u^\theta\phi$,  and taking a spatial derivative of the equation of $Q^\theta$  in \eqref{e2} and then testing it by $\nabla Q^\theta\phi$ for any nonnegative
$\phi\in C_0^\infty(\R^3\times (0,t])$,  to obtain the following local energy inequality
\begin{eqnarray}\label{7}
&&\int_{\R^3}\big(|\u^\theta|^2+|\nabla Q^\theta|^2\big)\phi(x,t)\,dx+2\int_0^t\int_{\R^3}\big(|\nabla\u^\theta|^2+|\D Q^\theta|^2\big)\phi \,dxds\nonumber\\
&&=\int_0^t\int_{\R^3}\Big[\big(|\u^\theta|^2+|\nabla Q^\theta|^2\big)(\partial_t\phi+\D\phi)
+2 \nabla \Psi_\theta[Q^\theta]\otimes\nabla Q^\theta:
\u^\theta\otimes \nabla\phi\Big]\,dxds\nonumber\\
&&+\int_0^t\int_{\R^3}(|\u^\theta|^2\Psi_\theta[\u^\theta]\cdot\nabla\phi+2P^\theta\u^\theta\cdot\nabla\phi+2\nabla(\Psi_\theta[Q^\theta])\cdot
f_{\rm{LdG}}(Q^\theta) \u^\theta\phi) \,dxds\nonumber\\
&&+2\int_0^t\int_{\R^3}\big(\Psi_\theta[Q^\theta] f_{\rm{LdG}}(Q^\theta)-f_{\rm{LdG}}(Q^\theta)\Psi_\theta[Q^\theta]\big): \nabla\u^\theta\phi \,dxds\nonumber\\
&&+2\int_0^t\int_{\R^3}\big(\nabla Q^\theta\otimes \nabla Q^\theta-|\nabla Q^\theta|^2 I_3\big)):\nabla^2\phi \,dxds\nonumber\\
&&-2\int_0^t\int_{\R^3}(\Psi_\theta[Q^\theta](\D Q^\theta-f_{\rm{LdG}}(Q^\theta))-(\D Q^\theta-f_{\rm{LdG}}(Q^\theta))\Psi_\theta[Q^\theta]):\u^\theta\otimes\nabla\phi \,dxds\nonumber\\
&&-2\int_0^t\int_{\R^3}\Big(\omega^\theta\Psi_\theta[Q^\theta]-\Psi_\theta[Q^\theta]\omega^\theta\Big):\nabla Q^\theta\nabla\phi \,dxds\nonumber\\
&&-2\int_0^t\int_{\R^3}\nabla(f_{\rm{LdG}}(Q^\theta))\cdot \nabla Q^\theta\phi \,dxds.
\end{eqnarray}
Taking the limit in \eqref{7} as $\theta\rightarrow 0$, we see by the lower semicontinuity that it holds
\begin{equation*}
\begin{split}
&\int_{\R^3}\big(|\u|^2+|\nabla Q|^2\big)\phi(x,t)\,dx+2\int_0^t\int_{\R^3}\big(|\nabla\u|^2+|\D Q|^2\big)\phi \,dxds\\
&\le\liminf_{\theta\to 0}\Big[\int_{\R^3}\big(|\u^\theta|^2+|\nabla Q^\theta|^2\big)\phi(x,t)\,dx\\
&\qquad\qquad\ +2\int_0^t\int_{\R^3}\big(|\nabla\u^\theta|^2+|\D Q^\theta|^2\big)\phi \,dxds\Big].
\end{split}
\end{equation*}
While it follows from \eqref{l2-conv1} and \eqref{l2-conv1.0} that 
\begin{equation*}
\begin{split}
&\lim_{\theta\to0}\ {\rm{RHS\ of }}\ \eqref{7}\\
&=\int_0^t\int_{\R^3}\Big(|\u|^2+|\nabla Q|^2\Big)(\partial_t\phi+\D\phi)\,dxdt\\
&+\int_0^t\int_{\R^3}(|\u|^2+|\nabla Q|^2+2(P-F_{\rm{LdG}}(Q)))\u\cdot\nabla\phi)+2\nabla Q\otimes\nabla Q:\u\otimes\nabla\phi\,dxds\\
&+2\int_0^t\int_{\R^3}\big[\nabla Q\otimes\nabla Q-|\nabla Q|^2 I_3\big]:\nabla^2\phi\,dxds\\
&-2\int_0^t\int_{\R^3} (Q(\D Q-f_{\rm{LdG}}(Q))-(\D Q-f_{\rm{LdG}}(Q)) Q): \u\otimes\nabla\phi \,dxds\\
&-2\int_0^t\int_{\R^3}\big(\omega Q-Q\omega\big):\nabla Q\nabla\phi \,dxds
-2\int_0^t\int_{\R^3}\nabla(f_{\rm{LdG}}(Q))\cdot \nabla Q\phi \,dxds.
\end{split}
\end{equation*}
Here we have used the following convergence result
\begin{equation}\label{new-p1}
\begin{split}
\int_0^t\int_{\R^3} \nabla(\Psi_\theta[Q^\theta])\cdot
f_{\rm{LdG}}(Q^\theta) \u^\theta\phi \,dxds
&\to \int_0^t\int_{\R^3} \nabla Q\cdot
f_{\rm{LdG}}(Q) \u\phi \,dxds\\
&=\int_0^t\int_{\R^3} \nabla (F_{\rm{LdG}}(Q)) \u\phi \,dxds\\
&=-\int_0^t\int_{\R^3} F_{\rm{LdG}}(Q) \u\nabla\phi \,dxds.
\end{split}
\end{equation}
Putting these together yields the desired local energy inequality \eqref{lei} for $(\u, P, Q)$. This completes the proof
of the existence of suitable weak solution in the first case. \qed

\medskip
In the next subsection, we will indicate how to construct a suitable weak solution of \eqref{e2} for the Ball-Majumdar potential function.
\subsection{The Ball-Majumdar potential \fbox{$F_{\rm{bulk}}(Q)=F_{\rm{BM}}(Q)$ and $\Omega=\mathbb T^3$}}  Since
$G_{\rm{BM}}$, given by \eqref{BMP}, is singular outside the physical domain
$$\mathcal{D}=\Big\{Q\in \mathcal{S}_0^{(3)}: \ -\frac13<\lambda_i(Q)<\frac23, \ i=1,2,3\Big\},$$
we need to regularize it. For this part, we follow the scheme by Wilkinson \cite{Wilkinson} (Section 3)  very closely. First
we regularize it by using the Yosida-Moreau regularization of convex analysis \cite{Y} \cite{ET}: For $m\in\mathbb N^+$, define
  $$\widetilde{G}^m_{\rm{BM}}(Q):=\inf_{A\in \mathcal{S}_0^{(3)}}\Big\{ m|A-Q|^2+G_{\rm{BM}}(A) \Big\},\ \forall Q\in\mathcal{S}_0^{(3)}.$$
Then smoothly mollify $\widetilde{G}_{\rm{BM}}^m$ through the standard mollifications:
  $${G}_{\rm{BM}}^m(Q):=\int_{\mathcal{S}_0^{(3)}}\widetilde{G}_{\rm{BM}}^m(Q-R)\Phi_m(R)\,dR,$$
  where $\Phi_m(R)={m^5}\Phi\left(m R \right)$, and 
  $\Phi\in C_0^\infty(\mathcal{S}_0^{(3)})$ is nonnegative and satisfies
  $${\rm{supp}}\ \Phi\subset \Big\{Q\in\mathcal{S}_0^{(3)}: \ |Q|<1\Big\},  
\ \  \int_{\mathcal{S}_0^{(3)}}\Phi(R)\,dR=1.$$
As in \cite{Wilkinson} Proposition 3.1, $G_{\rm{BM}}^m$ satisfies the following properties:{\it
\begin{enumerate}
\item[(G0)] $G_{\rm{BM}}^m$ is an isotropic function of $Q$.
\item [(G1)] $G_{\rm{BM}}^m\in C^\infty(\mathcal{S}_0^{(3)})$ is convex on $\mathcal{S}_0^{(3)}$.
\item [(G2)] There exists a constant $g_0>0$, independent of $m$, such that for any $m\in \mathbb N^+$,
$G_{\rm{BM}}^m(Q)\ge -g_0$ holds for all $Q\in \mathcal{S}_0^{(3)}$.
\item [(G3)] $G_{\rm{BM}}^m(Q)\le G_{\rm{BM}}^{m+1}(Q)\le G_{\rm{BM}}(Q)$  on $\mathcal{S}_0^{(3)}$ for all $m\ge 1$.
\item [(G4)] $G_{\rm{BM}}^m\to G_{\rm{BM}}$ and $\nabla_QG_{\rm{BM}}^m\to \nabla_Q G_{\rm{BM}}$ in $L^\infty_{\rm{loc}}(\mathcal{D})$, as $m\to\infty$.
\item [(G5)] There exist  $\alpha(m), \beta(m),\gamma(m)>0$ such that
$$\alpha(m)|Q|-\beta(m)\le \big|\langle\nabla_Q G_{\rm{BM}}^m(Q)\rangle\big|\le \gamma(m)(1+|Q|), \ \forall Q\in \mathcal{S}_0^{(3)}.$$
\item [(G6)] For $k\ge 2$, there exists $C(m,k)>0$ such that
$$\big|\langle\nabla_Q^k G_{\rm{BM}}^m(Q)\rangle\big|\le C(m,k)(1+|Q|^2), \ \forall Q\in \mathcal{S}_0^{(3)}.$$
\end{enumerate}}
For our purpose in this paper, we also need the following estimate on $G_{\rm{BM}}^m$.
\begin{Lemma}\label{lowerG1} For any $m\in \mathbb N^+$, $G_{\rm{BM}}^m$ satisfies
\begin{equation}\label{lowerG2}
G_{\rm{BM}}^m(Q) \ge \frac{m}4 |Q|^2-g_0, \ \forall Q\in \mathcal{S}_0^{(3)} \ {\rm{with}}\ |Q|\ge 11,
\end{equation}
where $g_0>0$ is the same constant given by ${\rm{(G2)}}$.  
\end{Lemma}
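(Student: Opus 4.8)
The plan is to first derive a lower bound for the Yosida--Moreau regularization $\widetilde{G}^m_{\rm{BM}}$ and then transfer it to its mollification $G^m_{\rm{BM}}$. Two structural facts are needed. First, $G_{\rm{BM}}(A)=+\infty$ unless $A$ lies in the physical domain $\mathcal{D}$, and $\mathcal{D}$ is bounded: if $A\in\mathcal{D}$, its eigenvalues satisfy $\lambda_i(A)\in(-\frac13,\frac23)$ and $\sum_i\lambda_i(A)=0$, whence $|A|^2=\tr(A^2)=\sum_i\lambda_i(A)^2\le 1$. Second, $G_{\rm{BM}}(A)\ge -g_0$ for all $A\in\mathcal{S}_0^{(3)}$, with the same $g_0$ as in (G2); this is immediate from (G2) and (G3), since $G_{\rm{BM}}\ge G^m_{\rm{BM}}\ge -g_0$.

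Using these, I would first show
$$\widetilde{G}^m_{\rm{BM}}(P)\ge m\big(|P|-1\big)^2-g_0\qquad\text{whenever } |P|\ge 1 .$$
Indeed, only $A\in\mathcal{D}$ contribute to the infimum defining $\widetilde{G}^m_{\rm{BM}}(P)$, and for such $A$ one has $|A|\le 1\le|P|$, so $m|A-P|^2+G_{\rm{BM}}(A)\ge m\big(|P|-|A|\big)^2-g_0\ge m\big(|P|-1\big)^2-g_0$; taking the infimum over $A$ gives the claim. Next I would pass to $G^m_{\rm{BM}}$. Since $\Phi_m(R)=m^5\Phi(mR)$ with $\supp\Phi\subset\{|R|<1\}$ and $m\ge 1$, the mollifier $\Phi_m$ is supported in $\{|R|<\frac1m\}\subset\{|R|<1\}$ and has total mass $1$. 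Hence, for $|Q|\ge 11$ and $R\in\supp\Phi_m$, we have $|Q-R|\ge|Q|-1\ge 10\ge 1$, so the previous step applies with $P=Q-R$:
$$\widetilde{G}^m_{\rm{BM}}(Q-R)\ge m\big(|Q-R|-1\big)^2-g_0\ge m\big(|Q|-2\big)^2-g_0\ge \frac{m}{4}|Q|^2-g_0 ,$$
the last inequality being $(|Q|-2)^2\ge\frac14|Q|^2$, valid since $|Q|\ge 11\ge 4$. Integrating against $\Phi_m(R)\,dR$ and using $\int_{\mathcal{S}_0^{(3)}}\Phi_m=1$ then yields $G^m_{\rm{BM}}(Q)\ge\frac{m}{4}|Q|^2-g_0$, which is \eqref{lowerG2}.

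The argument is elementary and I do not expect a genuine obstacle; the only delicate point is keeping the additive constant equal to the $g_0$ of (G2), which is why it is cleanest to read $G_{\rm{BM}}\ge -g_0$ off from (G2)--(G3) rather than to recompute a lower bound for the Boltzmann entropy $\int_{\mathbb{S}^2}\rho\log\rho\,d\sigma$. The threshold $|Q|\ge 11$ is far from sharp (any $|Q|\ge 4$ would do with the identical proof), and the extra room comfortably absorbs both the mollification radius $1/m\le 1$ and the crude bound $|A|\le 1$ on $\mathcal{D}$.
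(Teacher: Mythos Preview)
Your proof is correct and follows essentially the same approach as the paper: bound $\widetilde{G}^m_{\rm{BM}}$ from below via the distance to the bounded physical domain $\mathcal{D}$, then pass to the mollification. Your version is in fact more explicit about the mollification step (the paper only asserts that the bound on $\widetilde{G}^m_{\rm{BM}}$ ``yields'' \eqref{lowerG2}), and you use the slightly sharper bound $|A|\le 1$ on $\mathcal{D}$ (exploiting the trace-free constraint) where the paper uses $|A|\le 2/\sqrt{3}$.
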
 
\begin{proof} Since $G_{\rm{BM}}(Q)=\infty$ for $Q\not\in \mathcal{D}$, it follows from the definition of  $\widetilde{G}^m_{\rm{BM}}$ and (G2) that 
\begin{equation*}
\begin{split}
\widetilde{G}^m_{\rm{BM}}(Q)&= \inf_{A\in \mathcal{D}}\Big\{ m|A-Q|^2+G_{\rm{BM}}(A) \Big\}\\
&\ge \inf_{A\in \mathcal{D}}\Big\{ m|A-Q|^2\Big\}-g_0\\
&=m {\rm{dist}}^2\big(Q, \overline{\mathcal{D}}\big)-g_0.
\end{split}
\end{equation*}
Thus for any $Q\in \mathcal{S}_0^{(3)}$ with $|Q|\ge 10$, we have
$$
\widetilde{G}_{\rm{BM}}^m(Q)\ge m (|Q|-\frac{2}{\sqrt{3}})^2-g_0\ge m \big(\frac{|Q|}{\sqrt{2}}\big)^2-g_0=\frac{m}2 |Q|^2-g_0.
$$
It is not hard to see that this estimate, along with the definition of ${G}_{\rm{BM}}^m$, yields \eqref{lowerG2}.  The proof is now complete.
\end{proof} 

Now we set 
$$F_{\rm{BM}}^m (Q)= \nu G_{\rm{BM}}^m (Q)-\frac{\kappa}2 |Q|^2, \ \forall Q\in \mathcal{S}_0^{(3)}, $$
and 
$$ f_{\rm{BM}}^m (Q)= \nu \big\langle\nabla_QG_{\rm{BM}}^m (Q)\big\rangle-{\kappa} Q,
\ \forall Q\in \mathcal{S}_0^{(3)}.$$
Observe that the convexity of $G^m_{\rm{BM}}$ on $\mathcal{S}_0^{(3)}$ yields that
\begin{equation}\label{convex}
{\rm{tr}}\nabla_Qf_{\rm{BM}}^m (Q)(\nabla Q,\nabla Q)={\rm{tr}}\nabla^2_QF_{\rm{BM}}^m (Q)(\nabla Q,\nabla Q) \ge -\kappa |\nabla Q|^2, 
\end{equation}
for all $Q\in H^1(\Omega, \mathcal{S}_0^{(3)}$.

Note that if we view a function on $\mathbb T^3$ as a $\mathbb Z^3$- periodic function on $\R^3$, then the ``retarded" mollification procedure
given in the previous subsection can be directly performed on functions defined in $\mathbb T^3$.

Similar to the subsection 3.1, we can introduce an approximate system of \eqref{e2} for the Ball-Majumdar potential as follows.
For $T>0$ and a fixed large $N\in\mathbb N^+$, let $\theta=\frac{T}{N}\in (0,1]$. Then we seek $(\u^{\theta,m}, P^{\theta,m}, Q^{\theta,m})$ that solves
\begin{equation}\label{e3}
  \left\{
    \begin{array}{l}
     \partial_t Q^{\theta,m}+\u^{\theta, m}\cdot \nabla\Psi_\theta[Q^{\theta,m}]-\omega^{\theta,m}\Psi_\theta[Q^{\theta,m}]
     +\Psi_\theta[Q^{\theta,m}] \omega^{\theta,m}\\
     =\Delta Q^{\theta,m}- f^m_{\rm{BM}}(Q^{\theta,m}),\\
      \partial_t \u^{\theta,m}+\Psi_\theta[\u^{\theta,m}]\cdot \nabla \u^{\theta,m}+\nabla P^{\theta,m}\\
      =\Delta \u^{\theta,m}-\nabla (\Psi_\theta[Q^{\theta,m}])\cdot\big(\Delta Q^{\theta,m}- f^m_{\rm{BM}}(Q^{\theta,m})\big)\\
  -\Dv\left( \Psi_\theta[Q^{\theta,m}](\Delta Q^{\theta,m}- f^m_{\rm{BM}}(Q^{\theta,m}))
  -(\Delta Q^{\theta,m}- f^m_{\rm{BM}}(Q^{\theta,m}))\Psi_\theta[Q^{\theta,m}] \right), \\
      \Dv \u^{\theta,m}=0,
         \end{array}
    \right.
  \end{equation}
 in $\mathbb T^3\times [0,T]$,  subject to the initial condition \eqref{IC}.  Here $\omega^{\theta,m}=\omega(\u^{\theta,m})=\frac{\nabla \u^{\theta,m}-(\nabla\u^{\theta,m})^\top}{2}$.
  
Since the system \eqref{e3} is simply the system \eqref{e2} with
$f_{\rm{LdG}}$ replaced by $f^m_{\rm{BM}}$,
we can argue  as in the subsection 3.1 to find a global weak solution $(\u^{\theta, m}, P^{\theta,m}, Q^{\theta,m})$
of  \eqref{e3} and \eqref{IC}  in $Q_T=\mathbb T^3\times [0,T]$ such that
$$\u^{\theta,m}\in L^\infty_tL^2_x\cap L^2_tH^1_x(Q_T), \ Q^{\theta,m}\in L^\infty_tH^1_x\cap L^2_tH^2_x(Q_T),
\ P^{\theta,m}\in L^2(Q_T).$$
Moreover, by calculations similar to Lemma \ref{L0}, we deduce that $(\u^{\theta,m}, Q^{\theta,m})$  satisfies
the global energy inequality: for $0\le t\le T$, 
\begin{eqnarray}\label{3.2.1}
&&E(\u^{\theta,m},Q^{\theta,m})(t)
+\int_{\mathbb T^3\times [0,t]}\big(|\nabla\u^{\theta,m}|^2+|\D Q^{\theta,m}-f_{\rm{BM}}^m(Q^{\theta,m})|^2\big)\,dxdt\nonumber\\
&&= E(\u^{\theta,m},Q^{\theta,m})(0)\le\int_{\mathbb T^3}(\frac12|\u_0|^2+\frac12|\nabla Q_0|^2+F_{\rm{BM}}(Q_0))(x)\,dx.
\end{eqnarray} 
It follows from \eqref{3.2.1} and \eqref{convex} that
\begin{equation*}
\begin{split}
&\int_{\mathbb T^3\times [0,t]}|\D Q^{\theta,m}-f_{\rm{BM}}^m(Q^{\theta,m})|^2\,dxdt\\
&=\int_{\mathbb T^3\times [0,t]}\big(|\D Q^{\theta,m}|^2+|f_{\rm{BM}}^m(Q^{\theta,m})|^2-2\D Q^{\theta,m}\cdot f_{\rm{BM}}^m(Q^{\theta,m})\big)\,dxdt\\
&=\int_{\mathbb T^3\times [0,t]}\big(|\D Q^{\theta,m}|^2+|f_{\rm{BM}}^m(Q^{\theta,m})|^2+2{\rm{tr}}\nabla_Qf_{\rm{BM}}^m(Q^{\theta,m})
(\nabla Q^{\theta, m}, \nabla Q^{\theta, m}) \big)\,dxdt\\
&\ge \int_{\mathbb T^3\times [0,t]}\big(|\D Q^{\theta,m}|^2+|f_{\rm{BM}}^m(Q^{\theta,m})|^2-\kappa |\nabla Q^{\theta, m}|^2)\,dxdt.
\end{split}
\end{equation*}
Substituting this into \eqref{3.2.1} and applying Gronwall's inequality, we obtain that for any $0\le t\le T$, 
\begin{eqnarray}\label{3.2.10}
&&E(\u^{\theta,m},Q^{\theta,m})(t)
+\int_{\mathbb T^3\times [0,t]}\big(|\nabla\u^{\theta,m}|^2+|\D Q^{\theta,m}|^2+|f_{\rm{BM}}^m(Q^{\theta,m})|^2\big)\,dxdt\nonumber\\
&&\le e^{CT}\int_{\mathbb T^3}(\frac12|\u_0|^2+\frac12|\nabla Q_0|^2+F_{\rm{BM}}(Q_0))(x)\,dx.
\end{eqnarray} 
It follows from \eqref{3.2.1} that 
$$\sup_{0\le t\le T} \int_{\mathbb T^3}F_{\rm{BM}}^m(Q^{\theta,m})(x,t)\,dx
\le \int_{\mathbb T^3}(\frac12|\u_0|^2+\frac12|\nabla Q_0|^2+F_{\rm{BM}}(Q_0)\big)(x)\,dx.
$$
This, combined with (G2) and \eqref{lowerG2}, implies that there exists a sufficiently large $m_0=m_0(\nu, \kappa, g_0)\in \mathbb N^+$ such that for all
$m\ge m_0$, 
\begin{equation*}  
\begin{split}
&\big(\frac{m\nu}{8}-\frac{\kappa}2\big)\int_{\{x\in\mathbb T^3:\ |Q^{\theta, m}(x,t)|\ge 11\}} |Q^{\theta, m}|^2(x,t)\,dx \\
&\le\int_{\{x\in\mathbb T^3:\ |Q^{\theta, m}(x,t)|\ge 11\}}\Big[\nu(\frac{m}4 |Q^{\theta, m}|^2-g_0) -\frac{\kappa}2 |Q^{\theta, m}|^2\Big](x,t)\,dx \\
&\le \int_{\{x\in\mathbb T^3:\ |Q^{\theta, m}(x,t)|\ge 11\}} F^m_{\rm{BM}}(Q^{\theta, m})(x,t)\,dx \\
&=\int_{\mathbb T^3} F^m_{\rm{BM}}(Q^{\theta, m})(x,t)\,dx-\int_{\{x\in\mathbb T^3:\ |Q^{\theta, m}(x,t)|\le 11\}} F^m_{\rm{BM}}(Q^{\theta, m})(x,t)\,dx\\
&=\int_{\mathbb T^3} F^m_{\rm{BM}}(Q^{\theta, m})(x,t)\,dx\\
&-\int_{\{x\in\mathbb T^3:\ |Q^{\theta, m}(x,t)|\le 11\}} \Big[\nu \big(G^m_{\rm{BM}}(Q^{\theta, m})+g_0\big)-\frac{\kappa}2 |Q^{\theta, m}|^2-\nu g_0\Big](x,t)\,dx\\
&\le \int_{\mathbb T^3} F^m_{\rm{BM}}(Q^{\theta, m})(x,t)\,dx
+ \int_{\{x\in\mathbb T^3:\ |Q^{\theta, m}(x,t)|\le 11\}} (\nu g_0+\frac{\kappa}2 |Q^{\theta, m}|^2(x,t))\,dx\\
&\le \int_{\mathbb T^3}(\frac12|\u_0|^2+\frac12|\nabla Q_0|^2+F_{\rm{BM}}(Q_0)\big)(x)\,dx+(\nu g_0+\frac{121\kappa}2) |\mathbb T^3|
\end{split}
\end{equation*}
holds for any $0\le t\le T$. 
Therefore we conclude that for $m\ge m_0$, it holds that
\begin{equation}\label{l2est-Q}
\begin{split}
&\sup_{0\le t\le T} \int_{\mathbb T^3} |Q^{\theta, m}|^2(x,t)\,dx\\
&\le C\Big(\|\u_0\|_{L^2(\mathbb T^3)}, \|Q_0\|_{H^1(\mathbb T^3)}, \|F_{\rm{BM}}(Q_0)\|_{L^1(\mathbb T^3)}, \nu, g_0, \kappa\Big).
\end{split}
\end{equation}
As in subsection 3.1, the pressure function $P^{\theta, m}$ solves
\begin{equation}\label{p-equation2}
\begin{split}
&-\D P^{\theta,m}\\
&=\Dv^2\big(\Psi_{\theta}[\u^{\theta,m}]\otimes\u^{\theta,m}\big)
+\Dv\big(\nabla(\Psi_{\theta}[Q^{\theta,m}])\cdot(\D Q^{\theta,m}-f_{\rm{BM}}^m(Q^{\theta,m}))\big)
\end{split}  \ \ {\rm{in}}\ \mathbb T^3.
\end{equation}
We can apply the same argument as in the previous subsection to conclude that $P^{\theta, m}\in L^\frac53(\mathbb T^3\times [0,T])$, and 
\begin{equation}\label{pestimate1}
\big\|P^{\theta, m}\big\|_{L^\frac53(\mathbb T^3\times [0,T])}
\le C\Big(\|\u_0\|_{L^2(\mathbb T^3)}, \|Q_0\|_{H^1(\mathbb T^3)}, \|F_{\rm{BM}}(Q_0)\|_{L^1(\mathbb T^3)}, \nu, g_0, \kappa\Big).
\end{equation}

With estimates \eqref{pestimate1} and \eqref{3.2.10}, we can utilize the system \eqref{e3} to obtain that
\begin{equation}\label{ut-estimate2}
\begin{split}
&\Big\|\partial_t \u^{\theta,m}\Big\|_{L^2([0,T], W^{-1,4}(\mathbb T^3))}\\
&\leq C\Big(\|\u_0\|_{L^2(\R^3)}, \|Q_0\|_{H^1(\R^3)}, \|F_{\rm{BM}}(Q_0)\|_{L^1(\mathbb T^3)}, \nu, g_0, \kappa\Big),
\end{split}
\end{equation}
\begin{equation}\label{dt-estimate2}
\Big\|\partial_t Q^{\theta, m}\Big\|_{L^2([0,T], L^\frac32(\mathbb T^3))}
\le C\Big(\|\u_0\|_{L^2(\R^3)}, \|Q_0\|_{H^1(\R^3)}, \|F_{\rm{BM}}(Q_0)\|_{L^1(\mathbb T^3)}, \nu, g_0, \kappa\Big),
\end{equation} 
uniformly for $\theta\in (0, 1]$ and $m\ge m_0$.

For each fixed $m\ge m_0$, we can assume without loss of generality that there exists 
$$(\u^m, P^m, Q^m)\in L^\infty_tL^2_x\cap L^2_tH^1_x(Q_T)\times L^\frac53(Q_T)
\times L^\infty_tH^1_x(Q_T) $$  
such that as $\theta\to 0$, 
\begin{equation*}
\begin{cases}
&\u^{\theta,m}\rightharpoonup \u^m \ \ \ {\rm{in}}\ \ \ L^2_t H^1_x(Q_T), \\
&\u^{\theta,m}\rightarrow \u^m \ \ \ {\rm{in}}\ \ \ L^p(Q_T)\ \ \forall 1<p<\frac{10}3,\\
& P^{\theta, m} \rightharpoonup P^m \ \ {\rm{in}}\ \ L^\frac53(Q_T),\\
& Q^{\theta, m} \rightharpoonup Q^m \ \ {\rm{in}}\ \ L^2_t H^2_x(Q_T),  \\
& Q^{\theta, m} \rightarrow Q^m \ \ {\rm{in}}\ \ L^r_t L^s_x(Q_T), \ \forall 1<r,s<\infty,\\
&\D Q^{\theta, m} -f^m_{\rm{BM}}(Q^{\theta, m})\rightharpoonup\D Q^m -f^m_{\rm{BM}}(Q^{m})
\ \ \ {\rm{in}}\ \ \ L^2(Q_T),\\
&F^m_{\rm{BM}}(Q^{\theta, m})\rightarrow F^m_{\rm{BM}}(Q^{m})
\ \ \ {\rm{in}}\ \ \ L^1(Q_T).
\end{cases}
\end{equation*}
As in subsection 3.1, we can now verify that $(\u^m, P^m, Q^m)$ is a weak solution of 
\begin{equation}\label{e4}
  \left\{
    \begin{array}{l}
     \partial_t Q^{m}+\u^{m}\cdot \nabla Q^{m}-\omega^{m}Q^{m}
     +Q^{m} \omega^{m}=\Delta Q^{m}- f^m_{\rm{BM}}(Q^{m}),\\
      \partial_t \u^{m}+\u^{m}\cdot \nabla \u^{m}+\nabla (P^{m}-F^m_{\rm{BM}}(Q))
      =\Delta \u^{m}-\nabla Q^{m}\cdot\Delta Q^{m}\\
       -\Dv\left[Q^m(\Delta Q^{m}-f_{\rm{BM}}(Q^m))-
      (\Delta Q^{m}-f_{\rm{BM}}(Q^m))Q^{m} \right], \\
      \Dv \u^{m}=0,
         \end{array}
    \right.
  \end{equation}
 in $\mathbb T^3\times [0,T]$,  subject to the initial condition \eqref{IC}. 
 
 By the lower semicontinuity the following global
 energy inequality holds: for $0\le t\le T$, 
\begin{eqnarray}\label{3.2.2}
&&\int_{\mathbb T^3}(\frac12|\u^m|^2+\frac12|\nabla Q^m|^2+F^m_{\rm{BM}}(Q^m))(x,t)\,dx\nonumber\\
&&\ +\int_{\mathbb T^3\times [0,t]}\big(|\nabla\u^{m}|^2+|\D Q^{m}-f_{\rm{BM}}^m(Q^{m})|^2\big)\,dxdt\nonumber\\
&&\le \int_{\mathbb T^3}(\frac12|\u_0|^2+\frac12|\nabla Q_0|^2+F_{\rm{BM}}(Q_0))(x)\,dx,
\end{eqnarray} 
and
\begin{eqnarray}\label{3.2.100}
&&E(\u^{m},Q^{m})(t)
+\int_{\mathbb T^3\times [0,t]}\big(|\nabla\u^{m}|^2+|\D Q^{m}|^2+|f_{\rm{BM}}^m(Q^{m})|^2\big)\,dxdt\nonumber\\
&&\le e^{CT}\int_{\mathbb T^3}(\frac12|\u_0|^2+\frac12|\nabla Q_0|^2+F_{\rm{BM}}(Q_0))(x)\,dx, \ \forall t\in [0,T].
\end{eqnarray} 
Also it follows from (\ref{l2est-Q}), \eqref{pestimate1}, \eqref{ut-estimate2}, and \eqref{3.2.100} that 
\begin{eqnarray}\label{l2est-Q1}
&&\max\Big\{\big\|Q^{m}\big\|_{L^\infty_tL^2(Q_T)}, \big\|P^m\big\|_{L^\frac53(Q_T)}, \big\|\partial_t \u^{m}\big\|_{L^2_tW_x^{-1,4}(Q_T)},
\big\|\partial_t Q^{m}\big\|_{L^2_tL^\frac32_x(Q_T)}\Big\}\nonumber\\
&&\le C\Big(\|\u_0\|_{L^2(\mathbb T^3)}, \|Q_0\|_{H^1(\mathbb T^3)}, \|F_{\rm{BM}}(Q_0)\|_{L^1(\mathbb T^3)}, \nu, g_0, \kappa\Big).
\end{eqnarray}
Furthermore, we can check that $(\u^m, P^m, Q^m)$ is a suitable weak solution of \eqref{e4} by verifying that it
satisfies the local inequality \eqref{lei} with $f_{\rm{bulk}}$ replaced by
$f^m_{\rm{BM}}$.

To show that  as $m\to \infty$, $(\u^m, P^m, Q^m)$ gives rise to a suitable weak solution of \eqref{e2}, we need to first bound 
$Q^m$ in a strictly physical subdomain of the physical  domain $\mathcal{D}$, since $G_{\rm{BM}}(Q)$ blows up as $Q\in\mathcal{D}$
tends to $\partial\mathcal{D}$. This amounts to establishing an $L^\infty$-estimate of $G_{\rm{BM}}(Q)$ in terms of the $L^1$-norm of
$G_{\rm{BM}}(Q_0)$, which was previously shown by Wilkinson \cite{Wilkinson} in a slightly different setting. 

More precisely, we need the following version of a generalized maximum principle.
\begin{Lemma}\label{max0} There exist $m_0\in\mathbb N^+$ and a positive constant $C_0$, independent of $m$, such that
for all $m\ge m_0$, 
\begin{equation}\label{max00}
\big\|G^m_{\rm{BM}}(Q^m)(\cdot, t)\big\|_{L^\infty(\mathbb T^3)}
\le C_0t^{-\frac{5}2} \big\|G_{\rm{BM}}(Q_0)\big\|_{L^1(\mathbb T^3)} + C_0, \forall 0<t<T.
\end{equation}
\end{Lemma}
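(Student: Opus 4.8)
The plan is to contract the $Q^m$-equation \eqref{e4}$_1$ with $\langle\nabla_Q G^m_{\rm{BM}}(Q^m)\rangle$ in order to derive a scalar parabolic differential inequality for $g^m:=G^m_{\rm{BM}}(Q^m)$, and then to run a Moser iteration on it fed only by the $m$-uniform bounds \eqref{3.2.100} and \eqref{l2est-Q1} and by the energy bound $\sup_t\int_{\mathbb T^3}F^m_{\rm{BM}}(Q^m)\le E(\u_0,Q_0)$. For the differential inequality, the chain rule gives $\partial_t g^m+\u^m\cdot\nabla g^m=\langle\nabla_Q G^m_{\rm{BM}}(Q^m)\rangle:(\partial_t Q^m+\u^m\cdot\nabla Q^m)$, since $\partial_t Q^m$ and $\nabla Q^m$ are $\mathcal{S}_0^{(3)}$-valued. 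Substituting \eqref{e4}$_1$, one uses the three properties of $G^m_{\rm{BM}}$: isotropy (G0) makes $\langle\nabla_Q G^m_{\rm{BM}}(Q^m)\rangle$ symmetric and commute with $Q^m$, so that $(\omega^m Q^m-Q^m\omega^m):\langle\nabla_Q G^m_{\rm{BM}}(Q^m)\rangle=0$ pointwise; convexity (G1) gives $\langle\nabla_Q G^m_{\rm{BM}}(Q^m)\rangle:\Delta Q^m=\Delta g^m-{\rm tr}\,\nabla^2_Q G^m_{\rm{BM}}(Q^m)(\nabla Q^m,\nabla Q^m)\le\Delta g^m$; and, writing $f^m_{\rm{BM}}(Q^m)=\nu\langle\nabla_Q G^m_{\rm{BM}}(Q^m)\rangle-\kappa Q^m$ and applying Young's inequality to the cross term $\kappa\langle\nabla_Q G^m_{\rm{BM}}(Q^m)\rangle:Q^m$, one obtains, in the weak sense on $\mathbb T^3\times(0,T)$,
\begin{equation*}
\partial_t g^m+\u^m\cdot\nabla g^m-\Delta g^m\le-\tfrac{\nu}{2}\big|\langle\nabla_Q G^m_{\rm{BM}}(Q^m)\rangle\big|^2+\tfrac{\kappa^2}{2\nu}|Q^m|^2\le\tfrac{\kappa^2}{2\nu}|Q^m|^2 .
\end{equation*}
With $w:=g^m+g_0\ge0$ (nonnegative by (G2)) and $h:=\tfrac{\kappa^2}{2\nu}|Q^m|^2\ge0$ this reads $\partial_t w+\u^m\cdot\nabla w-\Delta w\le h$.

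I next record the $m$-uniform control of the data and the source. By (G3) and (G2), $w(\cdot,0)=G^m_{\rm{BM}}(Q_0)+g_0=(G^m_{\rm{BM}}(Q_0))_++v_0$, where $v_0:=g_0-(G^m_{\rm{BM}}(Q_0))_-$ satisfies $0\le v_0\le g_0$, and where $(G^m_{\rm{BM}}(Q_0))_+\le(G_{\rm{BM}}(Q_0))_+$; hence $\|(G^m_{\rm{BM}}(Q_0))_+\|_{L^1(\mathbb T^3)}\le\|G_{\rm{BM}}(Q_0)\|_{L^1(\mathbb T^3)}$. By \eqref{l2est-Q1} together with \eqref{3.2.100}, $Q^m$ is bounded in $L^\infty_tH^1_x(Q_T)$ uniformly in $m$, so $|Q^m|^2$ is bounded in $L^\infty_tL^3_x(Q_T)\hookrightarrow L^3(Q_T)$, an exponent strictly above the parabolic critical exponent $\tfrac52$; consequently $\|h\|_{L^3(Q_T)}\le C$, with $C$ depending only on the listed data.

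I then run the Moser iteration. Multiplying $\partial_t w+\u^m\cdot\nabla w-\Delta w\le h$ by $w^{p-1}\chi^2$, $p\ge2$, with $\chi=\chi(s)$ a smooth cutoff, the convective term vanishes because $\int\u^m\cdot\nabla w\,w^{p-1}=\tfrac1p\int\u^m\cdot\nabla(w^p)=0$, so the drift plays no role at all; feeding the resulting energy estimate into the parabolic embedding $L^\infty_tL^2_x\cap L^2_tH^1_x(Q_T)\hookrightarrow L^{10/3}(Q_T)$ gives the usual reverse-H\"older gain, which one iterates as $p\to\infty$ along a geometric sequence of time-cutoffs. To reach the precise form \eqref{max00}, apply the comparison principle for $\partial_t+\u^m\cdot\nabla-\Delta$ (valid because $\Dv\u^m=0$ and $h\ge0$) to write $w\le w_a+w_b+w_c$, where $w_a,w_b\ge0$ solve the homogeneous equation with initial data $(G^m_{\rm{BM}}(Q_0))_+$ and $v_0$, respectively, and $w_c\ge0$ solves the inhomogeneous equation with zero datum and right-hand side $h$. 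The iteration applied to $w_a$, whose $L^1(\mathbb T^3)$-norm is conserved in time, gives $\|w_a(\cdot,t)\|_{L^\infty(\mathbb T^3)}\le C_0t^{-5/2}\|(G^m_{\rm{BM}}(Q_0))_+\|_{L^1}\le C_0t^{-5/2}\|G_{\rm{BM}}(Q_0)\|_{L^1}$; the maximum principle gives $\|w_b\|_{L^\infty(Q_T)}\le\|v_0\|_{L^\infty}\le g_0$; and the same iteration applied to $w_c$, where now only the source $h\in L^3(Q_T)$ contributes, gives $\|w_c\|_{L^\infty(Q_T)}\le C_0\|h\|_{L^3(Q_T)}\le C_0$. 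Summing these and using $\|G^m_{\rm{BM}}(Q^m)(\cdot,t)\|_{L^\infty}\le\|w(\cdot,t)\|_{L^\infty}+g_0$ yields \eqref{max00}.

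The main obstacle is to carry out this last step with every constant --- the Sobolev constant, the constant absorbing $h$, and the powers generated by the cascade of cutoffs --- independent of the regularization index $m$; in particular, one may not use anywhere the $m$-dependent growth of $G^m_{\rm{BM}}$ recorded in (G5)--(G6), so the whole iteration must be driven by the $m$-uniform $L^1$ and energy bounds alone. This is precisely what the term $-{\rm tr}\,\nabla^2_Q G^m_{\rm{BM}}(Q^m)(\nabla Q^m,\nabla Q^m)$ in the differential inequality above, which is nonpositive by the convexity (G1), makes possible. A secondary and routine point is justifying that computation at the Sobolev regularity of $(\u^m,Q^m)$; this is done by performing it first on the Galerkin approximants used to construct $(\u^m,Q^m)$, on which $G^m_{\rm{BM}}$ is smooth with controlled derivatives, and then passing to the limit, retaining the inequality (stable under weak convergence) in place of the identity.
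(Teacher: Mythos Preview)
Your approach is correct and takes a somewhat different route from the paper's. Both begin identically by contracting \eqref{e4}$_1$ with $\langle\nabla_Q G^m_{\rm{BM}}(Q^m)\rangle$ and using isotropy (G0) and convexity (G1) to obtain the scalar subsolution inequality $\partial_t g^m + \u^m\cdot\nabla g^m - \Delta g^m \le \frac{\kappa^2}{2\nu}|Q^m|^2$. From there the arguments diverge. The paper mollifies the drift to $\u^m_\epsilon$, solves the corresponding \emph{equation}, splits it as (mean-zero homogeneous data) $+$ (constant data with source), invokes the Constantin--Kiselev--Ryzhik--Zlato\v{s} $L^1\to L^\infty$ estimate (via Wilkinson's Lemma~8.1) for the first piece, and for the second uses $|Q^m|^2\in L^1_tL^\infty_x$ (from the $L^2_tH^2_x$ bound) together with a direct $L^p$ argument, $p\to\infty$. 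Your route is more self-contained: since $\Dv\u^m=0$, the drift vanishes in \emph{every} $L^p$ energy identity, so Nash--Moser (both the $L^1\to L^\infty$ ultracontractivity for $w_a$ and the supercritical-source bound for $w_c$ with $h\in L^3$, $3>\tfrac{5}{2}$) runs exactly as for the heat equation on $\mathbb T^3$, with constants independent of $\u^m$ and hence of $m$; this avoids both the mollification and the external citation. Two minor remarks: (i) Nash--Moser actually gives the sharper rate $t^{-3/2}$ for $w_a$, which implies the stated $t^{-5/2}$ on $(0,T)$ after absorbing a factor of $T$ into $C_0$; (ii) the approximants used here to construct $(\u^m,Q^m)$ are the retarded-mollified system \eqref{e3}, not a Galerkin scheme, and in \eqref{e3} the convected quantity is $\Psi_\theta[Q^{\theta,m}]$ rather than $Q^{\theta,m}$, so the chain rule does not produce $\u^{\theta,m}\cdot\nabla G^m_{\rm{BM}}(Q^{\theta,m})$ at that level. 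It is cleaner to justify the scalar inequality directly for the weak solution $(\u^m,Q^m)$, using that $G^m_{\rm{BM}}\in C^\infty$ with polynomially bounded gradient (G5) and $Q^m\in L^2_tH^2_x\cap L^\infty_tH^1_x$.
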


For now we assume Lemma \ref{max0}, which will be proved in \S4 below.  We may assume without loss of generality
that there exists 
$$(\u, P, Q)\in L^\infty_tL^2_x\cap L^2_tH^1_x(Q_T)\times  L^\frac53(Q_T)
\times L^\infty_tH^1_x\cap L^2_t H^2_x(Q_T)$$ 
such that
\begin{equation*}
\begin{cases}
&\u^{m}\rightharpoonup \u \ \ \ {\rm{in}}\ \  L^2_t H^1_x(Q_T), \\
&\u^{m}\rightarrow \u \ \ \ {\rm{in}}\ \  L^p(Q_T), \ \forall 1<p<\frac{10}3,\\
& P^{m} \rightharpoonup P \ \ {\rm{in}}\ \ L^\frac53(Q_T),\\
& Q^{m} \rightharpoonup Q \ \ {\rm{in}}\ \ L^2_t H^2_x(Q_T),  \\
& Q^{m} \rightarrow Q \ \ {\rm{in}}\ \ L^r_t L^s_x(Q_T), \ \forall 1<r,s<\infty.
\end{cases}
\end{equation*}
From \eqref{max00}, we can also deduce that for any $0<\delta<T$, 
\begin{equation}\label{max01}
\big\|G_{\rm{BM}}(Q)\big\|_{L^\infty(\mathbb T^3\times [\delta, T])}
\le (C\delta^{-\frac{5}2} +e^T) \big\|G_{\rm{BM}}(Q_0)\big\|_{L^1(\mathbb T^3)} + \kappa^2 e^{T}.
\end{equation}
By the logarithmic divergence of $G_{\rm{BM}}$ as $Q\in \mathcal{D}\rightarrow \partial\mathcal{D}$ and \eqref{max01}, we conclude that
for any $\delta>0$, there exists $\varepsilon_0=\varepsilon_0(\delta, T)>0$ such that 
\begin{equation}\label{strict1}
Q(x,t)\in \mathcal{D}_{\varepsilon_0}, \ \forall (x,t)\in \mathbb T^3\times [\delta, T],
\end{equation} 
where
\begin{equation}\label{strict-phy}
\mathcal{D}_{\varepsilon_0}:=\Big\{Q\in \mathcal{D}: \ -\frac13+\varepsilon_0\le \lambda_i(Q(x,t))
\le \frac23-\varepsilon_0, \  i=1, 2, 3\Big\}.
\end{equation}

From \eqref{max00} and the quadratic growth property of $G^m_{\rm{BM}}$, we also see that there exists $C_0>0$, independent of $m$, such that 
for $m\ge m_0$,
\begin{equation}\label{strict2}
|Q_m(x,t)|\le C_0, \ (x,t)\in \mathbb T^3\times [\delta, T].
\end{equation}
We now claim that 
\begin{equation}\label{2convergence}
f^m_{\rm{BM}}(Q^m)\rightharpoonup f_{\rm{BM}}(Q) \ {\rm{in}}\ L^2(\mathbb T^3\times [\delta, T]), \ {\rm{as}}\ m\to\infty. 
\end{equation}
To see this, first observe that \eqref{3.2.100} yields that
$f_{\rm{BM}}^m(Q^m)$ is uniformly bounded in $L^2(\mathbb T^3\times [0,T])$. Thus there exists  a function $\bar f\in L^2(\mathbb T^3\times [0, T])$ such that
$$
f_{\rm{BM}}^m(Q^m)\rightharpoonup   \bar f\in L^2(\mathbb T^3\times [0, T]).
$$
Now we want to identify $\bar f$. It follows from $Q^m\to Q$ in $L^2(\mathbb T^3\times [0,T])$ that there exists $E_m\subset\mathbb T^3\times [0,T]$,
with $|E_m|\rightarrow 0$, such that 
$$Q^m\rightarrow Q, \ {\rm{uniformly\ in}}\ \mathbb T^3\times [0,T]\setminus E_m,$$
which, combined with $Q(\mathbb T^3\times [\delta,T])\subset\mathcal{D}_{\varepsilon_0}$,  yields that for sufficiently large $m$,
$$ Q^m(\mathbb T^3\times [\delta,T]\setminus E_m)\subset\mathcal{D}_{\frac{\varepsilon_0}2}.$$
Since $f^m_{\rm{BM}}\rightarrow f_{\rm{BM}}$ in $W^{1,\infty}(\mathcal{D}_{\frac{\varepsilon_0}2})$, we conclude that
$$f^m_{\rm{BM}}(Q^m)\rightarrow f_{\rm{BM}}(Q), \ {\rm{uniformly\ in}}\ \mathbb T^3\times [\delta,T]\setminus E_m.$$
Therefore $\bar{f}= f_{\rm{BM}}(Q) $ for a.e. $(x,t)\in \mathbb T^3\times [0,T]$, and \eqref{2convergence} holds.

From \eqref{2convergence} and $
\Delta Q^m\rightharpoonup \Delta Q \ {\rm{in}}\ L^2(\mathbb T^3\times [0, T]) , \ {\rm{as}} \ m\to\infty,$
we see that
$$
\Delta Q^m-f_{\rm{BM}}^m(Q^m) \rightharpoonup \Delta Q-f_{\rm{BM}}(Q) \ {\rm{in}}\ L^2(\mathbb T^3\times [0, T]) , \ {\rm{as}} \ m\to\infty,
$$

With all the estimates at hand, it is rather standard to show that passing to the limit in 
\eqref{e4}, as $m\to\infty$ first and $\delta\to 0$ second, yields that $(\u, P, Q)$ is a weak solution of \eqref{e2}. While passing to the limit in 
the local inequality for $(\u^m, P^m, Q^m)$, as $m\to\infty$ first and then $\delta\to 0$, we can also verify that 
$(\u, P, Q)$ satisfies the local energy inequality \eqref{lei}
with $f_{\rm{bulk}}(Q)$ replaced by $f_{\rm{BM}}(Q)$.  \qed

\section{Maximum principles}

In this section, we will show the maximum principles  for any weak solution $({\bf u}, Q)$ of
\eqref{e1} and \eqref{IC} in $\R^3$ with the Landau-De Gennes potential function $F_{\rm{LdG}}(Q)$, see also
\cite{GR1, GR2}, and
in $\mathbb T^3$ with the Ball-Majumdar potential function $F_{\rm{BM}}(Q)$, see also \cite{Wilkinson}. 
These will play important roles in the proof of partial regularity of
suitable weak solutions to \eqref{e1} in the sections 5 and 6 below.

\begin{Lemma}\label{max1} For $({\bf u}_0, Q_0)\in {\bf H}\times H^1(\mathbb R^3, \mathcal{S}_0^{(3)})$,
let $({\bf u}, Q)\in L^2_tH^1_x(\mathbb R^3\times\mathbb R_+,\mathbb R^3)\times
L^2_tH^2_x(\mathbb R^3\times\mathbb R_+, \mathcal{S}_0^{(3)})$ be a weak solution of \eqref{e1}-\eqref{IC}.
If, in addition, $Q_0\in L^\infty(\mathbb R^3,\mathbb{S}_0^{(3)})$ and $c>0$, then there exists a constant
$C>0$, depending on $\|Q_0\|_{L^\infty(\mathbb R^3)}$ and $a, b, c$, such that
\begin{equation}\label{max2}
|Q(x,t)|\le C, \ \forall (x,t)\in \mathbb R^3\times\mathbb R_+.
\end{equation}
\end{Lemma}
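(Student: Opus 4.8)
The plan is to reduce \eqref{max2} to a scalar parabolic maximum principle for $w:=|Q|^2=\tr(Q^2)$. The two structural facts that make this work are that the co-rotational terms are invisible to $w$, and that with $c>0$ the quartic piece of $f_{\rm LdG}$ dominates. First I would test the $Q$-equation \eqref{e1}$_1$ with $Q$ (at the formal level for now). Since $Q=Q^\top$ and $\omega^\top=-\omega$, cyclicity of the trace gives $(Q\omega-\omega Q):Q=-\tr(\omega Q^2)+\tr(\omega Q^2)=0$, so the rotation terms disappear. Using $Q:\Delta Q=\tfrac12\Delta w-|\nabla Q|^2$, $\;Q:(\u\cdot\nabla Q)=\tfrac12\,\u\cdot\nabla w$, $\;\Dv\u=0$, and (recalling $\tr Q=0$) the identity $f_{\rm LdG}(Q):Q=a\,w-b\,\tr(Q^3)+c\,w^2$, one obtains
\[
\partial_t w+\u\cdot\nabla w-\Delta w+2|\nabla Q|^2=-2a\,w+2b\,\tr(Q^3)-2c\,w^2 .
\]
For a symmetric traceless $Q$ with eigenvalues $\lambda_1,\lambda_2,\lambda_3$ one has the elementary bound $|\tr(Q^3)|\le\sum_i|\lambda_i|^3\le\big(\max_i|\lambda_i|\big)\sum_i\lambda_i^2\le|Q|\cdot|Q|^2=w^{3/2}$. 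Dropping $2|\nabla Q|^2\ge0$ and $-2a\,w\le0$ (using $a>0$) and estimating $2b\,\tr(Q^3)\le 2b\,w^{3/2}$ then yields
\[
\partial_t w+\u\cdot\nabla w-\Delta w\le 2b\,w^{3/2}-2c\,w^2=2w^{3/2}\big(b-c\,w^{1/2}\big),
\]
whose right-hand side is $\le 0$ as soon as $w\ge M_1:=(b/c)^2$.

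Next, set $M:=\max\{\|Q_0\|_{L^\infty(\R^3)}^2,\,M_1\}$ and test the last inequality with $(w-M)_+$. The transport term drops out since $\int_{\R^3}\u\cdot\nabla w\,(w-M)_+\,dx=\int_{\R^3}\u\cdot\nabla\big(\tfrac12(w-M)_+^2\big)\,dx=0$ by $\Dv\u=0$; the Laplacian contributes $\int_{\R^3}|\nabla(w-M)_+|^2\,dx\ge0$; and since $M\ge M_1$ the right-hand side is pointwise $\le0$ on $\{w>M\}$, where $(w-M)_+>0$, and vanishes elsewhere, so it contributes a nonpositive term. Hence
\[
\frac{d}{dt}\int_{\R^3}\tfrac12(w-M)_+^2\,dx+\int_{\R^3}|\nabla(w-M)_+|^2\,dx\le0 .
\]
Since $w(\cdot,0)=|Q_0|^2\le\|Q_0\|_{L^\infty(\R^3)}^2\le M$ gives $(w-M)_+|_{t=0}\equiv0$, the quantity $\int_{\R^3}\tfrac12(w-M)_+^2\,dx$ is nonincreasing in $t$ and vanishes at $t=0$, hence vanishes for all $t$; that is, $|Q(x,t)|^2\le M$ for a.e. $(x,t)\in\R^3\times\R_+$. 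This gives \eqref{max2} with $C=\sqrt M=\max\{\|Q_0\|_{L^\infty(\R^3)},\,b/c\}$, depending only on $\|Q_0\|_{L^\infty}$ and $b,c$.

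The only genuinely delicate point is carrying out the above at the regularity $Q\in L^\infty_tH^1_x\cap L^2_tH^2_x$ of a weak solution: one needs $\Delta Q\in L^2_{x,t}$ (true), $\partial_t Q$ in the dual of an admissible test space (read off from \eqref{e1}), and then the chain-rule identities $\int_{\R^3}\partial_t w\,(w-M)_+\,dx=\frac{d}{dt}\int_{\R^3}\tfrac12(w-M)_+^2\,dx$ and $\int_{\R^3}\u\cdot\nabla w\,(w-M)_+\,dx=\int_{\R^3}\u\cdot\nabla\big(\tfrac12(w-M)_+^2\big)\,dx$, which follow from a routine mollification in space together with a Steklov averaging in time. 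Alternatively, and perhaps more cleanly, one establishes the bound first for the smooth approximate solutions built in \S3 — where every step is legitimate — and then passes to the limit, the bound being uniform in the approximation parameter and stable under the weak convergences in \eqref{6}. I expect this justification/approximation step to be the main obstacle; the remainder is just the scalar parabolic maximum principle.
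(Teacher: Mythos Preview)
Your proposal is correct and follows essentially the same Stampacchia truncation argument as the paper: derive a scalar parabolic inequality for $w=|Q|^2$ using the cancellation $(Q\omega-\omega Q):Q=0$, and then test against $(w-M)_+$. The only cosmetic difference is that the paper bounds $b\,\tr(Q^3)$ via Young's inequality (yielding the threshold $\tfrac{b^2}{c^2}-\tfrac{2a}{c}$), while you use the spectral estimate $|\tr(Q^3)|\le|Q|^3$ and drop the $a$-term (yielding $(b/c)^2$); your discussion of the regularity justification is in fact more careful than what the paper writes.
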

\begin{proof} Multiplying \eqref{e1}$_1$ by $Q_{\alpha\beta}$ and taking summation over $1\le\alpha,\beta\le 3$,
we obtain
\begin{align}\label{Q1}
&\partial_t |Q|^2+{\bf u}\cdot\nabla |Q|^2+(Q\omega-\omega Q):Q\nonumber\\
&=\Delta |Q|^2-|\nabla Q|^2-2[a|Q|^2-b{\rm{tr}}(Q^3)+c{\rm{tr}}^2(Q^2)].
\end{align}
Since $\omega$ is skew-symmetric and $Q$ is symmetric, it is easy to see that
$$(Q\omega-\omega Q):Q=0, \ \forall(x,t)\in\mathbb R^3\times\mathbb R_+.$$
For $C>0$, to be determined later, define $\phi=(|Q|^2-C^2)_+$. It follows 
from \eqref{Q1} that $\phi$ satisfies
\begin{align}\label{Q2}
\partial_t\phi^2+{\bf u}\cdot\nabla \phi^2
=2(\Delta \phi)\phi -2|\nabla Q|^2\phi-4(a|Q|^2-b{\rm{tr}}(Q^3)+c{\rm{tr}}^2(Q^2))\phi
\end{align}
Integrating \eqref{Q2} over $\mathbb R^3$ and using $\nabla\cdot{\bf u}=0$, we obtain
\begin{align}\label{Q3}
&\frac{d}{dt}\int_{\mathbb R^3}\phi^2+4\int_{\mathbb R^3} (a|Q|^2-b{\rm{tr}}(Q^3)+c{\rm{tr}}^2(Q^2))\phi
\nonumber\\
&=-2\int_{\mathbb R^3}(|\nabla \phi|^2+|\nabla Q|^2\phi)\le 0.
\end{align}
Now we want to estimate the second term in the left hand side of \eqref{Q3} as follows.
It is easy to see, by Young's inequality,  that
$$b{\rm{tr}}(Q^3)\le \frac{c}{2} {\rm{tr}}^2(Q^2)+\frac{b^2}{2c} |Q|^2$$
so that
\begin{align*}
a|Q|^2-b{\rm{tr}}(Q^3)+c{\rm{tr}}^2(Q^2)&\ge \frac{c}2|Q|^4+\left(a-\frac{b^2}{2c}\right) |Q|^2\\
&\ge \frac{c}2|Q|^2\left[|Q|^2-\big(\frac{b^2}{c^2}-\frac{2a}{c}\big)\right].
\end{align*}
If we choose 
$$\displaystyle
C=\max\Big\{\|Q_0\|_{L^\infty(\mathbb R^3)}, \sqrt{(\frac{b^2}{c^2}-\frac{2a}{c})_+}\Big\}>0,$$
then we would have that
$$\int_{\mathbb R^3} (a|Q|^2-b{\rm{tr}}(Q^3)+c{\rm{tr}}^2(Q^2))\phi\ge 0,$$
and hence
$$\frac{d}{dt}\int_{\mathbb R^3}\phi^2\le 0.$$
Since $\phi(\cdot, 0)\equiv 0$ in $\mathbb R^3$, it follows that
$$\int_{\mathbb R^3}\phi^2(x,t)\,dx=0, \ \forall t\ge 0.$$
This implies that $|Q(x,t)|\le C$ for all $(x,t)\in\mathbb R^3\times\mathbb R_+$.
\end{proof}

Next we will give  a proof of Lemma \ref{max0}, which guarantees that 
$Q$ lies inside a strictly physical subdomain $\mathcal{D}_{\varepsilon_0}$ so that
$F_{\rm{BM}}(Q)$ becomes regular and hence $f_{\rm{BM}}(Q)$ is bounded. 

\medskip
\noindent{\it Proof of Lemma \ref{max0}}. It follows from the chain rule and the equation \eqref{e4}$_1$ 
that $G_{\rm{BM}}^m(Q^m)$ satisfies in the weak sense
\begin{equation}\label{weakpara1}
\begin{split}
&\partial_t (G_{\rm{BM}}^m(Q^m)) +\u^m\cdot\nabla (G_{\rm{BM}}^m(Q^m))\\
&=\D(G_{\rm{BM}}^m(Q^m))-{\rm{tr}}\nabla^2_QG_{\rm{BM}}^m(Q^m)(\nabla Q^m, \nabla Q^m)-f_{\rm{BM}}^m(Q^m)\langle\nabla_Q G_{\rm{BM}}^m(Q^m)\rangle,\\
&\le \D(G_{\rm{BM}}^m(Q^m))-(\nu \langle\nabla_Q G_{\rm{BM}}^m(Q^m)-\kappa Q^m) \langle\nabla_Q G_{\rm{BM}}^m(Q^m)\\
&\le \D(G_{\rm{BM}}^m(Q^m))+\frac{\kappa^2}{2\nu}|Q^m|^2,
\end{split}
\end{equation}
in $\mathbb T^3\times (0, T]$.
Indeed, this can be obtained by multiplying \eqref{e4}$_1$ by $\langle\nabla_Q G_{\rm{BM}}^m(Q^m)\rangle$
and using the fact $G_{\rm{BM}}^m$ is a smooth convex function.
Therefore $G_{\rm{BM}}^m(Q^m)\in L^\infty_tH^1_x(\mathbb T^3\times [0,T])$ satisfies in the weak sense
\begin{equation}\label{weakpara2}
\begin{split}
\partial_t (G_{\rm{BM}}^m(Q^m)) +\u^m\cdot\nabla (G_{\rm{BM}}^m(Q^m))\le \D(G_{\rm{BM}}^m(Q^m))+\frac{\kappa^2}{2\nu} |Q^m|^2,
\end{split}
\ \ {\rm{in}}\ \  \mathbb T^3\times (0, T].
\end{equation}
It follows from \eqref{3.2.2} and \eqref{l2est-Q1} that $Q^m\in L^2_tH^2_x(\mathbb T^3\times [0,T])$. In particular, by Sobolev's embedding theorem, we have that 
\begin{equation}\label{uniform bound}
\big\|Q^m\big\|_{L^2_tL^\infty_x(\mathbb T^3\times [0,T])}
\le C\Big(\|\u_0\|_{L^2(\mathbb T^3)}, \|Q_0\|_{H^1(\mathbb T^3)}, \|F_{\rm{BM}}(Q_0)\|_{L^1(\mathbb T^3)}, \nu, g_0, \kappa\Big).
\end{equation}

Since the drifting coefficient $\u^m$ in \eqref{weakpara2} is not smooth and $Q^m$ is not bounded in $\mathbb T^3\times [0,T]$, 
we can not directly apply the argument of  \S8 in \cite{Wilkinson} 
to prove \ref{max00}.  Here we proceed it by first considering an auxiliary equation with mollifying $\u^m$ as the drifting coefficient. 
More precisely, let $\u^m_\epsilon$ be a standard $\epsilon$-mollification on $\mathbb T^3\times [0,T]$ for $0<\epsilon<1$. Then
$\u^m_\epsilon\in C^\infty(\mathbb T^3\times [0,T])$ satisfies $\Dv \u^m_\epsilon=0$ and
$$\u^m_\epsilon\to \u^m \ {\rm{in}}\ L^2_tH^1_x(\mathbb T^3\times [0,T]), \ {\rm{as}}\ \epsilon\to 0.$$
Also let $g_\epsilon^m$  be $\epsilon$-mollifications of $|Q^m|^2$ in $\mathbb T^3\times [0,T]$,
and $h_\epsilon^m$ be $\epsilon$-mollifications of $G_{\rm{BM}}^m(Q_0)$ in $\mathbb T^3$. 
Then it follows from \eqref{uniform bound} that for all $m\ge m_0$, 
$$\big\|g^m\big\|_{L^2_tL^\infty_x(\mathbb T^3\times [0,T])}\le \big\|Q^m\big\|_{L^2_tL^\infty_x(\mathbb T^3\times [0,T])}^2,$$
$$\big\|h_\epsilon^m\big\|_{L^1(\mathbb T^3)}\le \big\|G_{\rm{BM}}(Q_0)\big\|_{L^1(\mathbb T^3)},$$
and
$$g^m_\epsilon\to |Q^m|^2 \ {\rm{in}}\ L^3(\mathbb T^3\times [0,T]),\  h_\epsilon^m\to G_{\rm{BM}}^m(Q_0) \ {\rm{in}}\ L^1(\mathbb T^3),
\ {\rm{as}}\ \epsilon\to 0.$$

Now let $v_\epsilon^m\in C^\infty(\mathbb T^3\times [0,T])$ be the unique solution of
\begin{equation}\label{auxi1}
\begin{cases}
\displaystyle\partial_t v_\epsilon^m+\u^m_\epsilon\cdot\nabla v_\epsilon^m=\D v_\epsilon^m+\frac{\kappa^2}{2\nu} g_\epsilon^m & \ {\rm{in}}\ \mathbb T^3\times [0,T],\\
\qquad\qquad\qquad v_\epsilon^m=h_\epsilon^m & \ {\rm{on}} \ \mathbb T^3\times\{0\}.
\end{cases}
\end{equation}
For $v_\epsilon^m$, we will modify the argument as illustrated in \cite{Wilkinson}, \S8, to achieve
that for $0<t<T$, 
\begin{equation}\label{uniform bound1}
\big\|v^m_\epsilon(\cdot,t)\big\|_{L^\infty(\mathbb T^3)} \le Ct^{-\frac{5}2} \big\|G_{\rm{BM}}(Q_0)\big\|_{L^1(\mathbb T^3)}+C_0.
\end{equation}
To show \eqref{uniform bound1}, decompose $v_\epsilon^m=v_1+v_2$, where $v_1$ solves
\begin{equation}\label{auxi2}
\begin{cases}
\displaystyle\partial_t v_1+\u^m_\epsilon\cdot\nabla v_1=\D v_1, & \ {\rm{in}}\ \ \ \mathbb T^3\times [0,T],\\
\displaystyle v_1=h_\epsilon^m-\frac{1}{|\mathbb T^3|}\int_{\mathbb T^3} h_\epsilon^m, & \ {\rm{on}} \ \ \ \mathbb T^3\times\{0\},
\end{cases} 
\end{equation}
and $v_2$ solves
\begin{equation}\label{auxi3}
\begin{cases}
\displaystyle\partial_t v_2+\u^m_\epsilon\cdot\nabla v_2=\D v_2 +\frac{\kappa^2}{2\nu} g_\epsilon^m, & \ {\rm{in}}\ \ \ \mathbb T^3\times [0,T],\\
\displaystyle v_2=\frac{1}{|\mathbb T^3|}\int_{\mathbb T^3} h_\epsilon^m, & \ {\rm{on}} \ \ \ \mathbb T^3\times\{0\}.
\end{cases} 
\end{equation}
For $v_1$, we can apply \cite{CKRZ} as in Lemma 8.1 of \cite{Wilkinson} to conclude that
\begin{equation}\label{v1-bound}
\big\|v_1(\cdot, t)\big\|_{L^\infty(\mathbb T^3)}\le Ct^{-\frac52} \big\|h_\epsilon^m-\frac{1}{|\mathbb T^3|}\int_{\mathbb T^3} h_\epsilon^m\big\|_{L^1(\mathbb T^3)}
\le Ct^{-\frac52}\big\|G_{\rm{BM}}(Q_0)\big\|_{L^1(\mathbb T^3)},
\end{equation}
for $0<t<T$.

While for $v_2$, we can multiply \eqref{auxi3}$_1$ by $|v_2|^{p-2}v_2$, $p>2$, and integrate the resulting equation over $\mathbb T^3$ to get
\begin{eqnarray*}
\frac{1}{p}\frac{d}{dt}\big\|v_2(t)\big\|_{L^p(\mathbb T^3)}^p &\le& \frac{\kappa^2}{2\nu} \big\|g_\epsilon^m(t)\big\|_{L^p(\mathbb T^3)}\big\|v_2(t)\big\|_{L^p(\mathbb T^3)}^{p-1}\\
&\le & \frac{\kappa^2}{2\nu} \big\|g_\epsilon^m(t)\big\|_{L^\infty(\mathbb T^3)}\big\|v_2(t)\big\|_{L^p(\mathbb T^3)}^{p-1},
\end{eqnarray*} 
so that
\begin{eqnarray*}
\frac{d}{dt}\big\|v_2(t)\big\|_{L^p(\mathbb T^3)}&\le & \frac{\kappa^2}{2\nu} \big\|g_\epsilon^m(t)\big\|_{L^\infty(\mathbb T^3)},
\end{eqnarray*} 
and hence
$$
\big\|v_2(t)\big\|_{L^p(\mathbb T^3)}\le \big\|v_2(0)\big\|_{L^p(\mathbb T^3)}+\frac{\kappa^2}{2\nu}\int_0^T \big\|g_\epsilon^m(t)\big\|_{L^\infty(\mathbb T^3)}\,dt,
\ \forall 0<t\le T.
$$
Sending $p\to\infty$ and applying \eqref{uniform bound},  we obtain that for $0<t<T$, 
\begin{equation}\label{v2-bound}
\begin{split}
&\big\|v_2(t)\big\|_{L^\infty(\mathbb T^3)}\\
&\le C\|h_\epsilon^m\|_{L^1(\mathbb T^3)}+\frac{\kappa^2}{2\nu}\int_0^T \big\|Q^m(t)\big\|_{L^\infty(\mathbb T^3)}^2\,dt\\
&\le \big\|G_{\rm{BM}}(Q_0)\big\|_{L^1(\mathbb T^3)}+
C\Big(\|\u_0\|_{L^2(\mathbb T^3)}, \|Q_0\|_{H^1(\mathbb T^3)}, \|F_{\rm{BM}}(Q_0)\|_{L^1(\mathbb T^3)}, \nu, g_0, \kappa\Big).
\end{split}
\end{equation}
Putting \eqref{v1-bound} and \eqref{v2-bound} together yields \eqref{uniform bound1}.

It is not hard to see that as $\epsilon\to 0$, there exists $v^m\in L^\infty_tL^2_x\cap L^2_tH^1_x(\mathbb T^3\times [0,T])$
such that $v_\epsilon^m\rightarrow v^m$ in $L^2(\mathbb T^3\times [0,T])$. Passing to the limit in the equation \eqref{auxi1}, we see that
$v^m$ is a weak solution of
\begin{equation}\label{auxi4}
\begin{cases}
\displaystyle\partial_t v^m+\u^m\cdot\nabla v^m=\D v^m+\frac{\kappa^2}{2\nu} |Q^m|^2 & \ {\rm{in}}\ \mathbb T^3\times [0,T],\\
\qquad\qquad\qquad v^m=G_{\rm{BM}}^m(Q_0) & \ {\rm{on}} \ \mathbb T^3\times\{0\}.
\end{cases}
\end{equation}
Moreover, passing to the limit of \eqref{uniform bound1}, we have that for any $0<t<T$, 
\begin{equation}\label{uniform bound2}
\big\|v^m(\cdot,t)\big\|_{L^\infty(\mathbb T^3)} \le Ct^{-\frac{5}2} \big\|G_{\rm{BM}}(Q_0)\big\|_{L^1(\mathbb T^3)}+C_0.
\end{equation}
Now observe that by the comparison principle on \eqref{weakpara2}, we know that for $m\ge m_0$, it holds.
$$G^m_{\rm{BM}}(Q^m)(x,t) \le v^m(\cdot,t)\le  Ct^{-\frac{5}2} \big\|G_{\rm{BM}}(Q_0)\big\|_{L^1(\mathbb T^3)}+C_0,$$
for all $(x,t)\in\mathbb T^3\times [0,T].$
This, combined with (G2), yields  \eqref{max00}. 
\qed

Note that passing to the limit in \eqref{max00}, the suitable weak solution $(\u, P, Q)$ to \eqref{e2}, constructed in \S3.2, satisfies that for
any $0<\delta<T$, 
\begin{equation}\label{uniform bound3}
\big\|G_{\rm{BM}}(Q)\big\|_{L^\infty(\mathbb T^3\times [\delta, T])}
\le C_0\delta^{-\frac{5}2}\big\|G_{\rm{BM}}(Q_0)\big\|_{L^1(\mathbb T^3)} +C_0.
\end{equation}
This completes the proof of Lemma \ref{max0}. \qed

\section{Partial regularity, Part I}

This section is devoted to establishing an $\epsilon_0$-regularity for suitable weak solutions
$({\bf u}, Q)$ of \eqref{e1} in $\O\times (0,\infty)$ in terms of renormalized $L^3$-norm of $(\u, Q)$.
The argument we will present is based on a blowing up argument, motivated by that of Lin \cite{LIN} 
on the Navier-Stokes equation, which works equally well for both the Landau-De Gennes potential $F_{\rm{LdG}}$
and the Ball-Majumdar potential $F_{\rm{BM}}$. More precisely, we want to establish the following property.

\begin{Lemma}\label{small-reg} For any $M>0$, there exist $\varepsilon_0>0$, $0<\tau_0<\frac12$, and $C_0>0$, depending on $M$,
such that if $({\bf u},Q,P)$ is a suitable weak solution of \eqref{e1}  in $\O\times (0,\infty)$, which satisfies,
for $z_0=(x_0,t_0)\in \O\times (r^2, \infty)$ and $r>0$,
\begin{equation}\label{bound}
\begin{cases} |Q|\le M & {\rm{if}}\ F_{\rm{bulk}}=F_{\rm{LdG}} \ {\rm{and}}\ \Omega=\mathbb R^3,\\
|G_{\rm{BM}}(Q)|\le M &  {\rm{if}}\ F_{\rm{bulk}}=F_{\rm{BM}}\ {\rm{and}}\ \Omega=\mathbb T^3,
\end{cases} \ \ {\rm{in}}\ \mathbb P_{r}(z_0),
\end{equation}
and 
\begin{equation}\label{small-cond}
r^{-2}\int_{\mathbb P_{r}(z_0)} (|{\bf u}|^3+|\nabla Q|^3)\,dxdt +\Big(r^{-2}\int_{\mathbb P_r(z_0)} |P|^\frac32\,dxdt\Big)^2\le\varepsilon_0^3,
\end{equation}
then
\begin{eqnarray}\label{small-decay}
&&(\tau_0r)^{-2}\int_{\mathbb P_{\tau_0 r}(z_0)} (|{\bf u}|^3+|\nabla Q|^3)\,dxdt +\Big((\tau_0r)^{-2}\int_{\mathbb P_{\tau_0 r}(z_0)}|P|^\frac32\,dxdt\Big)^2\nonumber\\
&\le&\frac12\max\Big\{r^{-2}\int_{\mathbb P_{r}(z_0)} (|{\bf u}|^3+|\nabla Q|^3)\,dxdt+\Big(r^{-2}\int_{\mathbb P_r(z_0)} |P|^\frac32\,dxdt\Big)^2, C_0r^3\Big\}.
\end{eqnarray}
\end{Lemma}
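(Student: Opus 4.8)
The plan is to prove the decay estimate by a blow-up (compactness) argument by contradiction, in the spirit of Lin's proof of Caffarelli--Kohn--Nirenberg. Write $\Phi(z_0,r)=r^{-2}\int_{\mathbb{P}_r(z_0)}(|\u|^3+|\na Q|^3)+\big(r^{-2}\int_{\mathbb{P}_r(z_0)}|P|^{\f32}\big)^2$, and fix $\tau_0\in(0,\f14)$ (its value to be pinned down at the end from the limit system). Suppose the conclusion fails for this $\tau_0$ and, say, $C_0=1$: then there are suitable weak solutions $(\u_k,P_k,Q_k)$ on $\mathbb{P}_{r_k}(z_k)$ satisfying \eqref{bound} with $\varepsilon_k^3:=\Phi(z_k,r_k)\to0$, but $\Phi(z_k,\tau_0 r_k)>\f12\max\{\Phi(z_k,r_k),r_k^3\}$. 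Since $\mathbb{P}_{\tau_0 r_k}\subset\mathbb{P}_{r_k}$ gives the crude bound $\Phi(z_k,\tau_0 r_k)\le\tau_0^{-4}\Phi(z_k,r_k)=\tau_0^{-4}\varepsilon_k^3$, while also $\Phi(z_k,\tau_0 r_k)>\f12 r_k^3$, we deduce $r_k\le C(\tau_0)\varepsilon_k$, hence $r_k\to0$ and $r_k^2/\varepsilon_k\to0$. Moreover, \eqref{bound} together with Lemma \ref{max1} (in the Landau--De Gennes case) resp.\ the quadratic growth of $G_{\rm{BM}}^m$ and the logarithmic divergence of $G_{\rm{BM}}$ at $\partial\mathcal D$ (in the Ball--Majumdar case) yield $\|Q_k\|_{L^\infty(\mathbb{P}_{r_k})}+\|f_{\rm{bulk}}(Q_k)\|_{L^\infty}+\|\na_Q f_{\rm{bulk}}(Q_k)\|_{L^\infty}\le C(M)$; this uniform control of the bulk terms is exactly what makes the argument simultaneous for both potentials.

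Next I would rescale. After translating $z_k$ to the origin, set $\tilde\u_k(y,s)=\f{r_k}{\varepsilon_k}\u_k(r_k y,r_k^2 s)$, $\tilde Q_k(y,s)=\f1{\varepsilon_k}Q_k(r_k y,r_k^2 s)$, $\tilde P_k(y,s)=\f{r_k^2}{\varepsilon_k}P_k(r_k y,r_k^2 s)$, so that $\Phi(z_k,\rho r_k)=\varepsilon_k^3\widetilde\Phi_k(\rho)$ with $\widetilde\Phi_k(1)=1$; in particular $\|\tilde\u_k\|_{L^3(\mathbb{P}_1)},\|\na\tilde Q_k\|_{L^3(\mathbb{P}_1)}\le1$ and $\|\tilde P_k\|_{L^{3/2}(\mathbb{P}_1)}^{3/2}\le1$. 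The triple $(\tilde\u_k,\tilde P_k,\tilde Q_k)$ is a suitable weak solution of the rescaled system, and every genuinely nonlinear or bulk term carries a factor among $\{\varepsilon_k,\,r_k^2,\,r_k^2/\varepsilon_k\}$, all $\to0$: e.g.\ the $Q$-equation reads $\partial_s\tilde Q_k+\varepsilon_k\tilde\u_k\cdot\na\tilde Q_k+\varepsilon_k(\tilde Q_k\tilde\omega_k-\tilde\omega_k\tilde Q_k)=\D\tilde Q_k-\f{r_k^2}{\varepsilon_k}f_{\rm{bulk}}(\varepsilon_k\tilde Q_k)$, where $\varepsilon_k\tilde Q_k=Q_k$ is bounded, and the rescaled pressure Poisson equation (using the identity \eqref{free-div3} to annihilate the $Q$-stress, exactly as in \eqref{P-eqn0}) reads $-\D\tilde P_k=\varepsilon_k\,{\rm{div}}^2\big(\tilde\u_k\otimes\tilde\u_k+\na\tilde Q_k\otimes\na\tilde Q_k-\f12|\na\tilde Q_k|^2 I_3\big)$ on $B_1$. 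I would then feed the rescaled local energy inequality \eqref{lei} on $\mathbb{P}_{3/4}$: every term on its right-hand side is controlled by the above $L^3$/$L^{3/2}$ bounds, except the cross term coming from the antisymmetric stress, which after scaling is $-2\int(Q_k\D\tilde Q_k-\D\tilde Q_k\,Q_k):\tilde\u_k\otimes\na\phi+o(1)$; since $|Q_k|\le C(M)$ and $\D\tilde Q_k$ is paired (through $\na(\tilde\u_k\phi)$) with $\na\tilde\u_k$, Young's inequality absorbs a small multiple of $\|\D\tilde Q_k\|_{L^2}^2+\|\na\tilde\u_k\|_{L^2}^2$ into the dissipation on the left. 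This gives the uniform bound
\[
\sup_{s}\int_{B_{3/4}}(|\tilde\u_k|^2+|\na\tilde Q_k|^2)+\int_{\mathbb{P}_{3/4}}(|\na\tilde\u_k|^2+|\D\tilde Q_k|^2)\le C(M).
\]

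From these bounds, the rescaled equations (to bound $\partial_s\tilde\u_k$, $\partial_s\tilde Q_k$ in negative Sobolev norms), and Aubin--Lions, along a subsequence and on $\mathbb{P}_{1/2}$ one gets $\tilde\u_k\to\u_\infty$ and $\na\tilde Q_k\to\na Q_\infty$ strongly in $L^3$ (interpolating $L^2$-convergence with the parabolic $L^{10/3}$-bound), $\D\tilde Q_k\rightharpoonup\D Q_\infty$ in $L^2$, $\tilde P_k\rightharpoonup P_\infty$ in $L^{3/2}$, and $Q_k=\varepsilon_k\tilde Q_k\to Q_*$ for some constant matrix $Q_*$ with $|Q_*|\le C(M)$ (because $\na Q_k=\varepsilon_k\na\tilde Q_k\to0$). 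Passing to the limit, $(\u_\infty,P_\infty,Q_\infty)$ solves the \emph{linear constant-coefficient} system $\partial_s Q_\infty-\D Q_\infty=-(Q_*\omega(\u_\infty)-\omega(\u_\infty)Q_*)$, $\partial_s\u_\infty-\D\u_\infty+\na P_\infty={\rm{div}}(Q_*\D Q_\infty-\D Q_\infty Q_*)$, ${\rm{div}}\,\u_\infty=0$, with $\D P_\infty=0$ (the quadratic source in the Poisson equation vanishes in the limit, and ${\rm{div}}^2$ of the $Q_*$-stress vanishes by antisymmetry, as in \eqref{free-div3}). By interior regularity for linear parabolic and Stokes systems, $(\u_\infty,\na Q_\infty)$ are smooth on $\mathbb{P}_{1/2}$ and $P_\infty(\cdot,s)$ is harmonic there, with $\|\u_\infty\|_{C^0}+\|\na Q_\infty\|_{C^0}+\|P_\infty\|_{L^{3/2}_sC^0_x}$ on $\mathbb{P}_{1/2}$ bounded by $C(M)$; consequently $\widetilde\Phi_\infty(\theta)\le C(M)\theta^2$ for all $\theta\in(0,\f14)$.

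Finally I would close the loop. The $\u$- and $\na Q$-parts of $\widetilde\Phi_k(\tau_0)$ pass to $\widetilde\Phi_\infty(\tau_0)$ by strong $L^3$-convergence. For the pressure, where there is no strong convergence, I decompose $\tilde P_k=\tilde P_k^{(1)}+\tilde P_k^{(2)}$ on $B_{3/4}$, with $\tilde P_k^{(1)}$ the Newtonian potential of $\varepsilon_k\,{\rm{div}}^2(\text{quadratic source})\chi_{B_{3/4}}$ and $\tilde P_k^{(2)}$ harmonic in $B_{3/4}$: Calder\'on--Zygmund gives $\|\tilde P_k^{(1)}\|_{L^{3/2}(\mathbb{P}_{3/4})}\le C\varepsilon_k(\|\tilde\u_k\|_{L^3}^2+\|\na\tilde Q_k\|_{L^3}^2)\to0$, and the interior mean-value estimate for harmonic functions gives $\int_{\mathbb{P}_{\tau_0}}|\tilde P_k^{(2)}|^{3/2}\le C\tau_0^3$, so that $\big(\tau_0^{-2}\int_{\mathbb{P}_{\tau_0}}|\tilde P_k|^{3/2}\big)^2\le C(\tau_0^2+\tau_0^{-4}\varepsilon_k^3)$. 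Altogether $\widetilde\Phi_k(\tau_0)\le C(M)\tau_0^2+o(1)$ as $k\to\infty$. Choosing $\tau_0=\tau_0(M)$ so small that $C(M)\tau_0^2\le\f18$, we get for $k$ large that $\Phi(z_k,\tau_0 r_k)=\varepsilon_k^3\widetilde\Phi_k(\tau_0)\le\f14\varepsilon_k^3=\f14\Phi(z_k,r_k)$, contradicting $\Phi(z_k,\tau_0 r_k)>\f12\Phi(z_k,r_k)$; the stated form with the $C_0r^3$ term then follows a fortiori (and $C_0$ absorbs any residual bulk contribution). I expect the main obstacles to be exactly the two technical points above: controlling the pressure without compactness via the Newtonian-potential/harmonic splitting (crucially using that the quadratic source is $O(\varepsilon_k)$ after rescaling), and the scaling bookkeeping that makes the rescaled local energy inequality close — in particular verifying that the antisymmetric-stress cross term, which a priori carries $\D\tilde Q_k$ at the critical scale, is absorbable because $|Q_k|\le C(M)$ and it pairs with $\na\tilde\u_k$.
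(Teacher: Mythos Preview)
Your approach is essentially identical to the paper's: contradiction, parabolic rescaling by $r_k$ followed by blow-up by $\varepsilon_k$, the rescaled local energy inequality \eqref{lei} to get uniform $L^\infty_tL^2_x\cap L^2_tH^1_x$ bounds on $(\tilde\u_k,\nabla\tilde Q_k)$, Aubin--Lions for strong $L^3$ convergence, the Newtonian-potential/harmonic splitting of the pressure with the key observation that the quadratic source is $O(\varepsilon_k)$, and a decay estimate for the limit system to close the contradiction. Your bookkeeping (setting $C_0=1$ rather than letting $C_i\to\infty$) is a harmless variant.

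The one point you underestimate is the sentence ``by interior regularity for linear parabolic and Stokes systems, $(\u_\infty,\nabla Q_\infty)$ are smooth.'' The paper isolates this as a separate lemma (Lemma~\ref{reg-limit-eqn}) and explicitly remarks that standard linear theory does \emph{not} apply directly, because the forcing $\Dv(Q_*\Delta Q_\infty-\Delta Q_\infty Q_*)$ in the Stokes equation involves third-order derivatives of $Q_\infty$, so naive bootstrapping stalls. The cure is a higher-order local energy argument that again exploits the cancellation \eqref{can2} (now at the level of $\nabla^k$), together with the identity \eqref{free-div3} which makes $\nabla^\beta\widehat P$ harmonic. This is the same structural miracle you invoked for the pressure at order zero, but you need it at every differentiation level to get smoothness of the limit; you should flag it as a third technical obstacle alongside the two you list.
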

\begin{proof} We prove it by contradiction. Suppose that the conclusion were false. Then there exists $M_0>0$ such that
for any $\tau\in (0,\frac12)$, we can find $\varepsilon_i\to 0$, $C_i\rightarrow\infty$, and
$r_i>0$, and $z_i=(x_i, t_i)\in \mathbb R^3\times (r_i^2,\infty)$
such that
{
\begin{equation}\label{bound0}
\begin{cases} |Q|\le M_0 & {\rm{if}}\ F_{\rm{bulk}}=F_{\rm{LdG}},\\
|G_{\rm{BM}}(Q)|\le M_0 &  {\rm{if}}\ F_{\rm{bulk}}=F_{\rm{BM}},
\end{cases} \ \ {\rm{in}}\ \mathbb P_{r_i}(z_i),
\end{equation}
and
\begin{equation}\label{small-i}
 \ \ r_i^{-2}\int_{\mathbb P_{r_i}(z_i)} (|{\bf u}|^3+|\nabla Q|^3)\,dxdt+\big(r_i^{-2}\int_{\mathbb P_{r_i}(z_i)}|P|^\frac32\,dxdt\big)^2=\varepsilon_i^3,
\end{equation}
}
but
{
\begin{equation}\label{no-decay}
\begin{split}
&(\tau r_i)^{-2}\int_{\mathbb P_{\tau r_i}(z_i)} (|{\bf u}|^3+|\nabla Q|^3)\,dxdt+\big((\tau r_i)^{-2}\int_{\mathbb P_{\tau r_i}(z_i)}|P|^\frac32\,dxdt\big)^2\\
&>\frac12\max\big\{\varepsilon_i^3, C_i r_i^3\big\}.
\end{split}
\end{equation}
}
From \eqref{no-decay}, we see that
{
\begin{eqnarray*}
C_ir_i^3&\le& 2(\tau r_i)^{-2}\int_{\mathbb P_{\tau r_i}(z_i)} (|{\bf u}|^3+|\nabla Q|^3)\,dxdt+2\big((\tau r_i)^{-2}\int_{\mathbb P_{\tau r_i}(z_i)}|P|^\frac32\,dxdt\big)^2\\
&\le& 2\tau^{-4}\left\{ r_i^{-2} \int_{\mathbb P_{r_i}(z_i)} (|{\bf u}|^3+|\nabla Q|^3)\,dxdt+\big(r_i^{-2}\int_{\mathbb P_{ r_i}(z_i)}|P|^\frac32\,dxdt\big)^2\right\}\\
&=&2\tau^{-4}\varepsilon_i^3
\end{eqnarray*}
}
so that
$$r_i\le \big(\frac{2\varepsilon_i^3}{C_i\tau^4}\big)^\frac13\to 0.$$
Also from \eqref{bound0}, we know that there exist $C_0>0$ and $\delta_0>0$  such that in the case $F_{\rm{bulk}}=F_{\rm{BM}}$, 
\begin{equation}\label{bound00}
Q(z)\in \mathcal{D}_{\delta_0}\ {\rm{and}} \ |f_{\rm{BM}}(Q(z))|+|\nabla_Q f_{\rm{BM}}(Q(z))|\le C_0, \ \forall z\in \mathbb P_{r_i}(z_i).
\end{equation}

Define a rescaled sequence of maps
$$({\bf u}_i, Q_i, P_i)(x,t)=\big(r_i{\bf u}, {Q}, r_i^2{P}\big)(x_i+r_i x, t_i+r_i^2 t), \ \forall x\in\mathbb R^3,\ t>-1.$$
Then $({\bf u}_i, Q_i, P_i)$ is a weak solution of the scaled 
Beris-Edwards system:
\begin{equation}\label{scaled-BE}
\begin{cases}
\partial_t {Q}_i+{\bf u}_i\cdot\nabla {Q}_i-\Omega({\bf u}_i) Q_i+Q_i\Omega({\bf u}_i)=\Delta Q_i-r_i^2f_{\rm{bulk}}(Q_i),\\
\partial_t{\bf u}_i+{\bf u}_i\cdot\nabla {\bf u}_i+\nabla {P}_i
=\Delta{\bf u}_i-\nabla Q_i\D Q_i\\
-\big[(\Delta Q_i -r_i^2 f_{\rm{bulk}}(Q_i))Q_i-Q_i (\Delta Q_i-r_i^2 f_{\rm{bulk}}(Q_i))\big],\\
\Dv{\bf u}_i=0,
\end{cases}
\end{equation}
where
$$
\O({\bf u}_i)=\frac{\nabla{\bf u}_i-(\nabla{\bf u}_i)^T}{2}.
$$
Moreover, $({\bf u}_i, Q_i, P_i)$ satisfies 
{
\begin{equation}\label{small-i0}
\int_{\mathbb P_{1}(0)} (|{\bf u}_i|^3+|\nabla Q_i|^3)\,dxdt+\big(\int_{\mathbb P_{1}(0)}|P_i|^\frac32\,dxdt\big)^2=\varepsilon_i^3,
\end{equation}
}
and
\begin{equation}\label{no-decayi0}
\tau ^{-2}\int_{\mathbb P_{\tau}(0)} (|{\bf u}_i|^3+|\nabla Q_i|^3)\,dxdt+\big(\tau ^{-2}\int_{\mathbb P_{\tau}(0)}|P_i|^\frac32\,dxdt\big)^2
>\frac12\max\big\{\varepsilon_i^3, C_i r_i^3\big\}.
\end{equation}

Define the blowing-up sequence 
$(\widehat{\bf u}_i, \widehat{Q}_i, \widehat{P}_i): \mathbb P_1(0)\mapsto \mathbb R^3\times
\mathcal{S}_0^3\times \mathbb R$, of $({\bf u}_i, Q_i, P_i)$, by letting 
$$
(\widehat{\bf u}_i, \widehat{Q}_i, \widehat{P}_i)(z)
=\big(\frac{{\bf u}_i}{\epsilon_i}, \frac{Q_i-\overline{Q}_i}{\epsilon_i}, \frac{P_i}{\epsilon_i}\big)(z), \ \forall z=(x,t)\in \mathbb P_1(0),
$$
where 
$$\overline{Q}_i=\frac{1}{|\mathbb P_1(0)|}\int_{\mathbb P_1(0)} Q_i$$
denotes the average of $Q_i$ over $\mathbb P_1(0)$.
Then $(\widehat{\bf u}_i, \widehat{Q}_i, \widehat{P}_i)$ satisfies
{
\begin{equation}\label{scaled-bound}
\begin{cases}\displaystyle
\int_{\mathbb P_1(0)}\widehat{Q}_i=0,\\
\displaystyle\int_{\mathbb P_1(0)}(|\widehat{\bf u}_i|^3+|\nabla \widehat{Q}_i|^3)\,dxdt+\big(\int_{\mathbb P_1(0)}|\widehat{P}_i|^\frac32\,dxdt\big)^2=1,\\
\displaystyle\tau^{-2}\int_{\mathbb P_\tau(0)}(|\widehat{\bf u}_i|^3+|\nabla \widehat{Q}_i|^3)\,dxdt
+\big(\tau^{-2}\int_{\mathbb P_\tau(0)}|\widehat{P}_i|^\frac32\,dxdt\big)^2
>\frac12\max\big\{1, C_i\frac{r_i^3}{\varepsilon_i^3}\big\},
\end{cases}
\end{equation}
}
and  $(\widehat{\bf u}_i, \widehat{Q}_i, \widehat{P}_i)$ is a suitable weak solution of
the following scaled Beris-Edwards equation:
\begin{equation}\label{scaled-BE1}
\begin{cases}
\partial_t \widehat{Q}_i+\varepsilon_i\widehat{\bf u}_i\cdot\nabla \widehat{Q}_i-\Omega(\widehat{\bf u}_i) {Q}_i+{Q}_i\Omega(\widehat{\bf u}_i)
=\Delta \widehat{Q}_i-\frac{r_i^2}{\varepsilon_i}f_{\rm{bulk}}(Q_i),\\
\partial_t\widehat{\bf u}_i+\varepsilon_i\widehat{\bf u}_i\cdot\nabla \widehat{\bf u}_i+\nabla \widehat{P}_i
=\Delta\widehat{\bf u}_i-\varepsilon_i\nabla \widehat{Q}_i\D \widehat{Q}_i\\
-\big[Q_i (\Delta\widehat{Q}_i-\frac{r_i^2}{\varepsilon_i} f_{\rm{bulk}}(Q_i))-(\Delta \widehat{Q}_i  -\frac{r_i^2}{\varepsilon_i} f_{\rm{bulk}}(Q_i))Q_i\big]\\
\Dv\widehat{\bf u}_i=0,
\end{cases}
\end{equation}
From \eqref{scaled-bound}, we assume that there exists
$$(\widehat{\bf u}, \widehat{Q},\widehat{P})\in L^3(\mathbb P_1(0))\times L^3_tW^{1,3}_x(\mathbb P_1(0))\times L^\frac32(\mathbb P_1(0))$$ 
such that, after passing to a subsequence,
$$(\widehat{\bf u}_i, \widehat{Q}_i,\widehat{P}_i)\rightharpoonup (\widehat{\bf u}, \widehat{Q},\widehat{P})
\ {\rm{in}}\ L^3(\mathbb P_1(0))\times L^3_tW^{1,3}_x(\mathbb P_1(0))\times L^\frac32(\mathbb P_1(0)).$$
It follows from \eqref{scaled-bound} and the lower semicontinuity that 
{
\begin{equation}\label{limit-bound}
\int_{\mathbb P_1(0)}(|\widehat{\bf u}|^3+|\nabla \widehat{Q}|^3)+\big(\int_{\mathbb P_1(0)}|\widehat{P}|^\frac32\big)^2\le 1.
\end{equation}
}
Moreover, we claim that
\begin{equation}\label{scaled-bound10}
\big\|\widehat{\bf u}_i\big\|_{L^\infty_tL^2_x(\mathbb P_\frac12(0))\cap L^2_tH^1_x(\mathbb P_\frac12(0))}
+\big\|\nabla\widehat{Q}_i\big\|_{L^\infty_tL^2_x(\mathbb P_\frac12(0))\cap L^2_tH^1_x(\mathbb P_\frac12(0))}
\le C<\infty.
\end{equation}
To show \eqref{scaled-bound10},  choose a cut-off function $\phi\in C_0^\infty(\mathbb P_1(0))$ such that 
$$0\le \phi\le 1,\ \phi\equiv 1 \ {\rm{on}}\  \mathbb P_{\frac12}(0), \ {\rm{and}}\ |\partial_t\phi|+|\nabla\phi|+|\nabla^2\phi|\le C.$$
Define 
$$\phi_i(x,t)=\phi\big(\frac{x-x_i}{r_i}, \frac{t-t_i}{r_i^2}\big),  \ \forall (x,t)\in \mathbb R^3\times (0,\infty).$$
Applying Lemma 2.2 with $\phi$ replaced by $\phi_i^2$ and applying H\"older's inequality, we would arrive at
\begin{eqnarray*}
&&\sup_{t_i-\frac{r_i^2}{4}\le t\le t_i}\int_{B_{{r_i}}(x_i)}(|{\bf u}|^2+|\D Q|^2)\phi_i^2\,dx
+\int_{\mathbb P_{{r_i}}(z_i)}(|\nabla{\bf u}|^2+|\nabla^2 Q|^2)\phi_i^2\,dxdt\\
&&\leq C\big[\int_{\mathbb P_{r_i}(z_i)}(|{\bf u}|^2+|\nabla Q|^2)|(\partial_t+\D)\phi_i^2|\,dxdt\\
&&\qquad+\int_{\mathbb P_{r_i}(z_i)}(|{\bf u}|^2+|\nabla Q|^2+|P|)|{\bf u}||\nabla\phi_i^2|\,dxdt+\int_{\mathbb P_{r_i}(z_i)}|\nabla Q|^2||\nabla^2(\phi_i^2)|\\
&&\qquad+\int_{\mathbb P_{r_i}(z_i)} (|\D {Q}| + |f_{\rm{bulk}}(Q)|)|\u||\nabla\phi_i^2|+|\nabla_Q f_{\rm{bulk}}(Q)||\nabla Q|^2\phi_i^2\,dxdt\big].
\end{eqnarray*}
Observe that
$$\int_{\mathbb P_{r_i}(z_i)} |\D {Q}| |\u||\nabla\phi_i^2|\,dxdt \le \frac12 \int_{\mathbb P_{r_i}(z_i)} |\D {Q}|^2\phi_i^2\,dxdt
+C\int_{\mathbb P_{r_i}(z_i)} |\u|^2|\nabla\phi_i|^2\,dxdt.
$$
Substituting this into the above inequality and performing rescaling, we obtain that
\begin{eqnarray}\label{renormalize}
&&\sup_{-\frac{1}{4}\le t\le 0}\int_{B_{\frac{1}2}(0)}(|\widehat{\bf u}_i|^2+|\D \widehat{Q}_i|^2)\,dx
+\int_{\mathbb P_{\frac12}(0)}(|\nabla\widehat{\bf u}_i|^2+|\nabla^2 \widehat{Q}_i|^2)\,dxdt\nonumber\\
&&\leq C\Big[\int_{\mathbb P_{1}(0)}(|\widehat{\bf u}_i|^2+|\nabla \widehat{Q}_i|^2)
+ (\varepsilon_i|\widehat{\bf u}_i|^2+\varepsilon_i|\nabla \widehat{Q}_i|^2+|\widehat{P}_i|)|\widehat{\bf u}_i|\,dxdt\Big]\nonumber\\
&& \quad +C\Big[\int_{\mathbb P_{1}(0)} \frac{r_i^2}{\varepsilon_i}| \widehat{\u}_i|\,dxdt
+ r_i^2\int_{\mathbb P_{1}(0)}|\nabla \widehat{Q}_i|^2\,dxdt\Big]\nonumber\\
&&\leq C(1+\frac{r_i^2}{\varepsilon_i}+r_i^2)\le C.
\end{eqnarray}
This yields \eqref{scaled-bound10}. From \eqref{scaled-bound10}, we may also assume that
\begin{equation}
\label{22convergence}
(\widehat{\bf u}_i, \widehat{Q}_i)\rightharpoonup (\widehat{\bf u}, \widehat{Q})
\ {\rm{in}}\ L^2_tH^1_x(\mathbb P_\frac12(0))\times L^2_tH^2_x(\mathbb P_\frac12(0)).
\end{equation}
Since $r_i\le\varepsilon_i$ and by \eqref{bound00} $|Q_i|\le M_0$ and $|f_{\rm{bulk}}(Q_i)|+|\nabla_Qf_{\rm{bulk}}(Q_i)|\le C_0$ in $\mathbb P_1(0)$, 
there exists a constant  $\overline{Q}\in \mathcal{S}_0^{(3)}$, with $|\overline{Q}|\le M_0$, such that, after passing to a subsequence,
$${Q_i\to \overline{Q} \ \ \ {\rm{in}}\ \ \ L^3(\mathbb P_\frac12(0)),}$$
and
$$
\frac{r_i^2}{\varepsilon_i}f_{\rm{bulk}}(Q_i)\to 0
 \ \ \ {\rm{in}}\ \ \ L^\infty(\mathbb P_\frac12(0)).
$$
Hence $(\widehat{\bf u},\widehat{Q},\widehat{P}): \mathbb P_\frac12(0)\mapsto \mathbb R^3\times\mathcal{S}_0^{(3)}\times \mathbb R$
solves the linear system:
\begin{equation}\label{limit-eqn}
\begin{cases}
\partial_t \widehat{Q}-\Delta \widehat{Q}=\Omega(\widehat{\bf u}) \overline{Q}-\overline{Q}\Omega(\widehat{\bf u}),\\
\partial_t\widehat{\bf u}-\Delta\widehat{\bf u}+\nabla \widehat{P}
=-\Dv(\overline{Q} \Delta\widehat{Q}-\Delta \widehat{Q} \overline{Q}),\\
\Dv\widehat{\bf u}=0,
\end{cases}
\end{equation}

Applying Lemma \ref{reg-limit-eqn} and \eqref{limit-bound}, we know that 
$$(\widehat{\bf u}, \widehat{Q})\in C^\infty(\mathbb P_\frac14), \ \widehat{P}\in L^\infty([-(\frac1{4})^2, 0], C^\infty(B_\frac14(0)))$$
satisfies
\begin{align}\label{decay0}
&\tau^{-2}\int_{\mathbb P_\tau(0)} (|\widehat{\bf u}|^3+|\nabla\widehat{Q}|^3)\,dxdt + \big(\tau^{-2}\int_{\mathbb P_\tau(0)}|\widehat{P}|^\frac32\,dxdt\big)^2\nonumber\\
&\le C\tau^3 \int_{\mathbb P_\frac12(0)}(|\widehat{\bf u}|^3+|\nabla\widehat{Q}|^3)\,dxdt +\big(\int_{\mathbb P_1(0)}|\widehat{P}|^\frac32\big)^2\nonumber\\
&\le C\tau^3, \ \forall\ \tau\in (0,\frac18).
\end{align}

We now claim that 
\begin{equation}\label{L3-conv1.1}
(\widehat{\bf u}_i,\nabla\widehat{Q}_i)\rightarrow (\widehat{\bf u},\nabla\widehat{Q})
\ {\rm{in}}\ L^3(\mathbb P_\frac38(0)).
\end{equation}
To prove \eqref{L3-conv1.1},  first observe that \eqref{renormalize} and the equation \eqref{scaled-BE1} imply that
$$\partial_t \widehat{\bf u}_i\in \big(L^2_tH^{-1}+L^2_tL^\frac65_x+L^\frac32_tW^{-1,\frac32}_x\big)\big(\mathbb P_\frac38(0)\big);
\ \partial_t \widehat{Q}_i\in L^\frac32_tL^\frac32_x(\mathbb P_\frac38(0)),$$ 
enjoy the following uniform bounds:
\begin{eqnarray*}
&&\big\|\partial_t \widehat{\bf u}_i\big\|_{\big(L^2_tH^{-1}_x+L^2_tL^\frac65_x+L^\frac32_tW^{-1,\frac32}_x\big)(\mathbb P_\frac38(0))}\\
&&\le C\big[\|\widehat{\bf u}_i\|_{L^\infty_tL^2_x(\mathbb P_\frac12(0))}
+\|\nabla\widehat{\bf u}_i\|_{L^2_tL^2_x(\mathbb P_\frac12(0))}+\|\nabla\widehat{Q}_i\|_{L^3(\mathbb P_\frac12(0))}^2
+\|\nabla^2\widehat{Q}_i\|_{L^2(\mathbb P_\frac12(0))}\big]\\
&&\le C,
\end{eqnarray*}
and
\begin{eqnarray*}
&&\big\|\partial_t \widehat{Q}_i\big\|_{L^\frac32(\mathbb P_\frac38(0))}\\
&&\le C\big[\|\widehat{Q}_i\|_{L^2_t H^1_x(\mathbb P_\frac12(0))}+\|\nabla\widehat{\bf u}_i\|_{L^2(\mathbb P_\frac12(0))}
+\|\nabla\widehat{Q}_i\|_{L^3(\mathbb P_\frac12(0))}+\|\widehat{\bf u}_i\|_{L^3(\mathbb P_\frac12(0))}\big]\\
&&\le C.
\end{eqnarray*}
Thus we can apply Aubin-Lions' compactness Lemma to conclude the $L^3$-strong convergence as in \eqref{L3-conv1.1}.

It follows from the $L^3$-strong convergence property \eqref{L3-conv1.1} that for any  $\tau\in (0,\frac18)$, 
\begin{equation}\label{no-gap}
\displaystyle\tau^{-2}\int_{\mathbb P_\tau(0)}(|\widehat{\bf u}_i|^3+|\nabla \widehat{Q}_i|^3)
=\displaystyle\tau^{-2}\int_{\mathbb P_\tau(0)}(|\widehat{\bf u}|^3+|\nabla \widehat{Q}|^3)+\tau^{-2} o(1)
\le C\tau^3+\tau^{-2} o(1),
\end{equation} 
where $o(1)$ stands for a quantity such that $\displaystyle\lim_{i\rightarrow\infty}o(1)=0$.

Now we need to estimate the pressure $\widehat{P}_i$. First, by taking divergence of the second equation 
\eqref{scaled-BE}$_2$, we see that $\widehat{P}_i$ solves
\begin{equation}\label{P-eqn}
\Delta \widehat{P}_i=-\epsilon_i \Dv^2\big[\widehat{\bf u}_i\otimes \widehat{\bf u}_i+(\nabla \widehat{Q}_i\otimes \nabla\widehat{Q}_i-\frac12|\nabla \widehat{Q}_i|^2I_3)\big]
\ {\rm{in}} \ B_1,
\end{equation}
where we have applied Lemma \ref{freed2} to guarantee
$$\Dv^2\big[Q_i(\Delta\widehat{Q}_i-\frac{r_i^2}{\varepsilon_i}f_{\rm{bulk}}(Q_i))
-(\Delta\widehat{Q}_i - \frac{r_i^2}{\varepsilon_i}f_{\rm{bulk}}(Q_i))Q_i\big] =0\  \ {\rm{in}}\ \ B_1.
$$
We need to show that
\begin{equation}\label{P-est}
\tau^{-2}\int_{\mathbb P_{\tau}(0)}|\widehat{P}_i|^\frac32 \,dxdt \le C\tau^{-2}(\varepsilon_i+o(1))+C\tau, \ \forall i\ge 1.
\end{equation}

To prove \eqref{P-est}, let $\eta\in C_0^\infty(B_1(0))$ be a cut-off function such that
$\eta\equiv 1$ in $B_\frac38(0)$, $0\le\eta\le 1$. For any $-(\frac38)^2\le t\le 0$, define $\widehat{P}_i^{(1)}(\cdot, t): \mathbb R^3\to\mathbb R$ by letting
\begin{eqnarray}\label{auxil1}
\widehat{P}_i^{(1)}(x,t)
=\int_{\mathbb R^3} \nabla^2_x G(x-y) \eta(y)\varepsilon_i[\widehat{\bf u}_i\otimes\widehat{\bf u}_i
+(\nabla\widehat{Q}_i\otimes \nabla\widehat{Q}_i-\frac12|\nabla \widehat{Q}_i|^2I_3)](y, t)\,dy,
\end{eqnarray}
where $G(\cdot)$ is the fundamental solution of $-\Delta$ in $\mathbb R^3$. 
Then it is easy to check that $\widehat{P}_i^{(2)}(\cdot, t)=(\widehat{P}_i- \widehat{P}_i^{(1)})(\cdot, t)$ satisfies
\begin{equation}\label{auxi20}
-\Delta \widehat{P}_i^{(2)}(\cdot, t)=0 \ \ {\rm{in}}\ \ B_\frac38(0).
\end{equation}
For $\widehat{P}_i^{(1)}$, we can apply the Calderon-Zygmund theory to show that 
\begin{eqnarray}\label{P-est1}
\big\|\widehat{P}_i^{(1)}\big\|_{L^\frac32(\mathbb R^3)}\le C\epsilon_i\big[\|\widehat{\bf u}_i\|_{L^3(B_1(0))}^2+\|\nabla\widehat{Q}_i\|_{L^3(B_1(0))}^2\big]
\end{eqnarray}
so that
\begin{eqnarray}\label{P-est2}
\big\|\widehat{P}_i^{(1)}\big\|_{L^\frac32(\mathbb P_\frac13(0))}
&\le& C\varepsilon_i(\|\widehat{\bf u}_i\|_{L^3(\mathbb P_1(0))}^2
+\|\nabla\widehat{Q}_i\|_{L^3(\mathbb P_1(0))}^2)\nonumber\\
&\le&C(\varepsilon_i+o(1)).
\end{eqnarray}
From the standard theory on harmonic functions,  $\widehat{P}_i^{(2)}(\cdot, t)\in C^\infty(B_\frac12(0))$
satisfies: for any $0<\tau<\frac14$,
\begin{eqnarray}\label{P-est3}
\tau^{-2}\int_{\mathbb P_\tau(0)} |\widehat{P}_i^{(2)}|^\frac32
&\le& C\tau \int_{\mathbb P_\frac13(0)}|\widehat{P}_i^{(2)}|^\frac32
\le C\tau \Big[\int_{\mathbb P_\frac13(0)}\big(|\widehat{P}_i|^\frac32+|\widehat{P}_i^{(1)}|^\frac32\big)\nonumber\\
&\le& C\tau(1+\varepsilon_i+o(1)).
\end{eqnarray}
Putting \eqref{P-est2} and \eqref{P-est3} together, we obtain \eqref{P-est}.

It follows from 
\eqref{no-gap} and \eqref{P-est} that there exist sufficiently small $\tau_0\in (0,\frac14)$ and sufficiently large $i_0$,
depending on $\tau_0$, such that for any $i\ge i_0$, it holds that
$$\displaystyle\tau_0^{-2}\int_{\mathbb P_{\tau_0}(0)}(|\widehat{\bf u}_i|^3+|\nabla \widehat{Q}_i|^3)\,dxdt
+\displaystyle\big(\tau_0^{-2}\int_{\mathbb P_{\tau_0}(0)}|\widehat{P}_i|^\frac32\,dxdt)^2
\le \frac14.$$
This contradicts to \eqref{scaled-bound}.  The proof of Lemma \ref{small-reg} is completed.
\end{proof}
We now need to establish the smoothness of the limit equation \eqref{limit-eqn}, namely,

\begin{Lemma}\label{reg-limit-eqn} Assume that $(\widehat{\bf u}, \widehat{Q})\in 
(L^\infty_tL^2_x\cap L^2_tH^1_x)(\mathbb P_\frac12)\times (L^\infty_tH^1_x\cap L^2_tH^2_x)(\mathbb P_\frac12)$
and $\widehat{P}\in L^\frac32(\mathbb P_\frac12)$ is a weak solution of the linear system
\eqref{limit-eqn}, then $(\widehat{\bf u}, \widehat{Q})\in C^\infty(\mathbb P_\frac14)$, and
the following estimate
\begin{equation}\label{decay}
\theta^{-2}\int_{\mathbb P_\theta} (|\widehat{\bf u}|^3+|\nabla\widehat{Q}|^3+|\widehat{P}|^\frac32)
\le C\theta^3 \int_{\mathbb P_\frac12}(|\widehat{\bf u}|^3+|\nabla\widehat{Q}|^3+|\widehat{P}|^\frac32)
\end{equation}
holds for any $\theta\in (0,\frac18)$.
\end{Lemma}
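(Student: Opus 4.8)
The plan is to exploit that \eqref{limit-eqn} is a \emph{linear, constant-coefficient} system, so that interior smoothness follows from a difference-quotient bootstrap built on a local energy estimate with the intrinsic cancellation, once the pressure has been disposed of.

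\textbf{Step 1: the pressure is spatially harmonic.} Applying Lemma \ref{freed2} with $Q^1\equiv\overline{Q}$ (constant), $Q^2=\widehat{Q}$ and $f_{\rm bulk}\equiv 0$ gives $\Dv^2(\overline{Q}\D\widehat{Q}-\D\widehat{Q}\,\overline{Q})=0$ in $B_\frac12$. Taking $\Dv$ of \eqref{limit-eqn}$_2$ and using $\Dv\widehat{\bf u}=0$, we obtain $\D\widehat{P}(\cdot,t)=0$ in $B_\frac12$ for a.e.\ $t$. Hence $\widehat{P}(\cdot,t)$ is harmonic, so it is smooth in $x$ for a.e.\ $t$, with $\|\nabla^k\widehat{P}(\cdot,t)\|_{L^\infty(B_r)}\le C(k,r)\|\widehat{P}(\cdot,t)\|_{L^\frac32(B_\frac12)}$ for $r<\frac12$; integrating the interior mean-value/oscillation estimates for harmonic functions over $t\in(-\theta^2,0)$, together with $|B_\theta|\sim\theta^3$, yields the $\widehat{P}$-contribution to \eqref{decay}. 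In particular $\nabla\widehat{P}$ is, on interior cylinders, a smooth-in-$x$ and suitably integrable lower-order forcing, and it plays no further role.

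\textbf{Step 2: the local energy estimate with cancellation.} Fix $\frac14<\rho'<\rho<\frac12$ and a cut-off $\phi\in C_0^\infty(\mathbb{P}_\rho)$ with $\phi\equiv 1$ on $\mathbb{P}_{\rho'}$ and $|\partial_t\phi|+|\nabla\phi|+|\nabla^2\phi|\le C$. Testing \eqref{limit-eqn}$_2$ against $\widehat{\bf u}\phi^2$ and the spatial gradient of \eqref{limit-eqn}$_1$ against $\nabla\widehat{Q}\phi^2$, exactly as in the proof of Lemma \ref{L1} but with $\overline{Q}$ constant and without transport and bulk-potential terms, the two coupling terms cancel by the constant-coefficient form of \eqref{can2}--\eqref{cancel2},
\[
\int \big(\Omega(\widehat{\bf u})\overline{Q}-\overline{Q}\,\Omega(\widehat{\bf u})\big):\D\widehat{Q}\,\phi^2
=\int \big(\overline{Q}\D\widehat{Q}-\D\widehat{Q}\,\overline{Q}\big):\nabla\widehat{\bf u}\,\phi^2 .
\]
All residual terms carry at most one derivative on $\widehat{\bf u}$, or two on $\widehat{Q}$, multiplied by derivatives of $\phi$ or by $\widehat{P}$; after one integration by parts to move a derivative off each $\D\widehat{Q}$ onto $\phi$, Young's inequality absorbs the top-order pieces into the left-hand side, giving
\[
\sup_t\int_{B_{\rho'}}\!\big(|\widehat{\bf u}|^2+|\nabla\widehat{Q}|^2\big)
+\int_{\mathbb{P}_{\rho'}}\!\big(|\nabla\widehat{\bf u}|^2+|\nabla^2\widehat{Q}|^2\big)
\le C(\rho,\rho')\Big(\|\widehat{\bf u}\|_{L^2(\mathbb{P}_\rho)}^2+\|\nabla\widehat{Q}\|_{L^2(\mathbb{P}_\rho)}^2+\|\widehat{P}\|_{L^\frac32(\mathbb{P}_\rho)}^2\Big).
\]

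\textbf{Step 3: difference-quotient bootstrap and full smoothness.} Since \eqref{limit-eqn} has constant coefficients, for any unit vector $e$ and small $h$ the spatial difference quotients $\delta_h^e(\widehat{\bf u},\widehat{Q},\widehat{P})$ solve the same system, and $\delta_h^e\widehat{P}$ is again harmonic and controlled as in Step 1. Applying the estimate of Step 2 to $\delta_h^e(\widehat{\bf u},\widehat{Q})$ on a slightly smaller cylinder and letting $h\to0$ gains one spatial derivative: $(\nabla\widehat{\bf u},\nabla^2\widehat{Q})\in L^\infty_tL^2_x\cap L^2_tH^1_x$ locally. Iterating along a nested family of cylinders shrinking to $\mathbb{P}_\frac14$ yields $(\widehat{\bf u},\widehat{Q})\in\bigcap_k H^k_x$ locally. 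The right-hand side $\Omega(\widehat{\bf u})\overline{Q}-\overline{Q}\,\Omega(\widehat{\bf u})$ of \eqref{limit-eqn}$_1$ is then smooth in $x$ and $L^2_{\rm loc}$ in $t$, so interior parabolic regularity (or differentiating in $t$ and repeating Step 2) makes $\widehat{Q}\in C^\infty$; consequently $\partial_\beta(\overline{Q}\D\widehat{Q}-\D\widehat{Q}\,\overline{Q})$ and $\nabla\widehat{P}$ in \eqref{limit-eqn}$_2$ are smooth in $(x,t)$, hence so is $\widehat{\bf u}$, and finally $\widehat{P}$. Thus $(\widehat{\bf u},\widehat{Q})\in C^\infty(\mathbb{P}_\frac14)$, with interior bounds on $\mathbb{P}_\frac14$ dominated by $\|\widehat{\bf u}\|_{L^3(\mathbb{P}_\frac12)}+\|\nabla\widehat{Q}\|_{L^3(\mathbb{P}_\frac12)}+\|\widehat{P}\|_{L^\frac32(\mathbb{P}_\frac12)}$.

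\textbf{Step 4: decay.} For $\theta\in(0,\frac18)$, using $|\mathbb{P}_\theta|\le C\theta^5$ with the $L^\infty(\mathbb{P}_\frac14)$-bounds on $(\widehat{\bf u},\nabla\widehat{Q})$ from Step 3 and the harmonic estimate on $\widehat{P}$ from Step 1,
\[
\theta^{-2}\!\int_{\mathbb{P}_\theta}\!\big(|\widehat{\bf u}|^3+|\nabla\widehat{Q}|^3\big)
\le C\theta^{3}\big(\|\widehat{\bf u}\|_{L^\infty(\mathbb{P}_\frac14)}^3+\|\nabla\widehat{Q}\|_{L^\infty(\mathbb{P}_\frac14)}^3\big)
\le C\theta^{3}\!\int_{\mathbb{P}_\frac12}\!\big(|\widehat{\bf u}|^3+|\nabla\widehat{Q}|^3+|\widehat{P}|^\frac32\big),
\]
and similarly for the $\widehat{P}$-term, which proves \eqref{decay}. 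The crux of the whole argument is the third-order-in-$\widehat{Q}$ forcing $\partial_\beta(\overline{Q}\D\widehat{Q}-\D\widehat{Q}\,\overline{Q})$ in \eqref{limit-eqn}$_2$, which obstructs a naive parabolic treatment; it is neutralized precisely by the two structural facts used above --- its divergence form and the algebraic antisymmetry that force $\widehat{P}$ to be harmonic (Lemma \ref{freed2}), and the cancellation identity \eqref{cancel2} that annihilates it against the $\widehat{Q}$-forcing in the energy estimate. The only delicate check is that, after these cancellations and one integration by parts, every remaining term in Step 2 is genuinely lower order and absorbable.
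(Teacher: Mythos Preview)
Your proposal is correct and follows essentially the same route as the paper's proof: both hinge on (i) the observation via Lemma~\ref{freed2} that $\widehat{P}$ is spatially harmonic, (ii) the local energy estimate in which the dangerous third-order term $\Dv(\overline{Q}\Delta\widehat{Q}-\Delta\widehat{Q}\,\overline{Q})$ cancels against the $\widehat{Q}$-forcing by the identity \eqref{cancel2}, and (iii) a bootstrap to arbitrary spatial regularity followed by interior $L^\infty$ bounds yielding \eqref{decay}. The only presentational difference is that you run the bootstrap via spatial difference quotients $\delta_h^e$ of the constant-coefficient system, whereas the paper differentiates directly in $x$ and writes the induction in terms of $\nabla^\beta\widehat{\bf u},\nabla^\alpha\widehat{Q}$ (cf.\ \eqref{ID5}--\eqref{ID7}); the paper itself acknowledges in a footnote (in the proof of Lemma~\ref{small-reg2}) that the difference-quotient version is the rigorous one, so your choice is in fact cleaner.
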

\begin{proof} The regularity of the limit equation \eqref{limit-eqn} doesn't follow from the standard theory
of linear parabolic equations in \cite{LSN}, since the source term ${\rm{div}}(\overline{Q}\Delta\widehat{Q}-\Delta\widehat{Q}\overline{Q})$
in the second equation of \eqref{limit-eqn} depends on third order derivatives of $\widehat{Q}$. It is based
on higher order energy methods, for which the cancellation property, as in the derivation of
local energy inequality for suitable  weak solutions of \eqref{e1},  plays a critical role. 

For nonnegative multiple indices $\alpha$, $\beta$, and $\gamma$ such that
$\alpha=\beta+\gamma$ and $\gamma$ is of order $1$, it is easy to see that 
$(\nabla^\alpha \widehat{Q}, \nabla^{\beta}\widehat{\bf u}, \nabla^{\beta}\widehat{P})$ satisfies
\begin{equation}\label{limit-eqn1}
\begin{cases}
\partial_t (\nabla^\alpha\widehat{Q})-\Delta (\nabla^\alpha\widehat{Q})
=\Omega(\nabla^\alpha\widehat{\bf u}) \overline{Q}-\overline{Q}\Omega(\nabla^\alpha\widehat{\bf u}),\\
\partial_t(\nabla^\beta\widehat{\bf u})-\Delta(\nabla^\beta\widehat{\bf u})+\nabla (\nabla^\beta\widehat{P})
=-\Dv[\overline{Q} \Delta(\nabla^\beta\widehat{Q})-\Delta (\nabla^\beta\widehat{Q}) \overline{Q}],\\
\Dv(\nabla^\beta\widehat{\bf u})=0,
\end{cases}
\end{equation}
Now we want to derive an arbitrarily higher order local energy inequality for \eqref{limit-eqn1}. 
For any given $\phi\in C_0^\infty(\mathbb P_\frac12(0))$, multiplying the first equation of
\eqref{limit-eqn1} by $\nabla^\alpha\widehat{Q}\phi^2$ and integrating over $\mathbb R^3$,
we obtain that by summing over all $\gamma$,
\begin{eqnarray}\label{ID3}
&&\frac{d}{dt}\int_{\mathbb R^3} \frac12{|\nabla(\nabla^\beta\widehat{Q})|^2}\phi^2 +\int_{\mathbb R^3} |\nabla^2(\nabla^\beta\widehat{Q})|^2\phi^2\nonumber\\
&&=\int_{\mathbb R^3} \frac12{|\nabla(\nabla^\beta\widehat{Q})|^2}(\partial_t+\Delta)\phi^2\nonumber\\
&&\ \ +\int_{\mathbb R^3} (\overline{Q}\Omega(\nabla^\beta\widehat{\bf u})-\Omega(\nabla^\beta\widehat{\bf u})\overline{Q}):(\Delta(\nabla^\beta\widehat{Q})\phi^2
+\nabla(\nabla^\beta\widehat{Q})\cdot\nabla\phi^2).
\end{eqnarray}
While, by multiplying the second equation of  \eqref{limit-eqn} by $\nabla^\beta\widehat{\bf u}\phi^2$ and integrating over $\mathbb{R}^3$, we obtain that
\begin{eqnarray}\label{ID4}
&&\frac{d}{dt}\int_{\mathbb R^3} \frac12{|\nabla^\beta\widehat{\bf u}|^2}\phi^2 
+\int_{\mathbb R^3} |\nabla(\nabla^\beta\widehat{\bf u})|^2\phi^2\nonumber\\
&&=\int_{\mathbb R^3} \frac12{|\nabla^\beta\widehat{\bf u}|^2}(\partial_t+\Delta)\phi^2
+\int_{\mathbb R^3} \nabla^\beta\widehat{P} \nabla^\beta\widehat{\bf u}\cdot\nabla \phi^2\nonumber\\
&&+\int_{\mathbb R^3} (\overline{Q}\Delta(\nabla^\beta\widehat{Q})-\Delta(\nabla^\beta\widehat{Q})\overline{Q}):
(\nabla(\nabla^\beta\widehat{\bf u})\phi^2
+\nabla^\beta\widehat{\bf u}\otimes\nabla\phi^2).
\end{eqnarray}
As in above, we observe that
$$
\int_{\mathbb R^3} [(\overline{Q}\Omega(\nabla^\beta\widehat{\bf u})-\Omega(\nabla^\beta\widehat{\bf u})\overline{Q}):\Delta(\nabla^\beta\widehat{Q})\phi^2
+(\overline{Q}\Delta(\nabla^\beta\widehat{Q})-\Delta(\nabla^\beta\widehat{Q})\overline{Q}):
\nabla(\nabla^\beta\widehat{\bf u})\phi^2]=0.
$$
Also, if we decompose $\beta=\beta_1+\beta_2$, where $\beta_2$ is of order $1$,
then by integration by parts we have that 
$$
\int_{\mathbb R^3} \nabla^\beta\widehat{P} \nabla^\beta\widehat{\bf u}\cdot\nabla \phi^2
=-\int_{\mathbb R^3} \nabla^{\beta_1}\widehat{P} (\nabla^{\beta+\beta_2}\widehat{\bf u}\cdot\nabla \phi^2
+\nabla^{\beta}\widehat{\bf u}\cdot \nabla(\nabla^{\beta_2}(\phi^2))
$$
so that
$$
\big|\int_{\mathbb R^3} \nabla^\beta\widehat{P} \nabla^\beta\widehat{\bf u}\cdot\nabla \phi^2\big|
\le C\int_{\mathbb R^3} |\nabla^{|\beta|-1}\widehat{P}| (|\nabla^{|\beta|+1}\widehat{\bf u}||\nabla (\phi^2)|
+|\nabla^{|\beta|}\widehat{\bf u}||\nabla^2(\phi^2)|).
$$
Hence, by adding \eqref{ID3} and \eqref{ID4} together and then taking summation over all $\beta$'s with $|\beta|=k\ge 0$,
we obtain that
\begin{eqnarray*}
&&\frac{d}{dt}\int_{\mathbb R^3} \frac12(|\nabla^k\widehat{\bf u}|^2+{|\nabla^{k+1}\widehat{Q}|^2})\phi^2 
+\int_{\mathbb R^3} (|\nabla^{k+1}\widehat{\bf u}|^2+|\nabla^{k+2}\widehat{Q}|^2)\phi^2\nonumber\\
&&\le\int_{\mathbb R^3} \frac12({|\nabla^k\widehat{\bf u}|^2}+|\nabla^{k+1}\widehat{Q}|^2)(|\partial_t(\phi^2)|+|\nabla^2(\phi^2)|)
\nonumber\\
&&+C\int_{\mathbb R^3} |\nabla^{k-1}\widehat{P}| (|\nabla^{k+1}\widehat{\bf u}||\nabla (\phi^2)|
+|\nabla^{k}\widehat{\bf u}||\nabla^2(\phi^2)|)\nonumber\\
&&\ + C\int_{\mathbb R^3}\big(|\nabla^{k+1}\widehat{\bf u}||\nabla^{k+1}\widehat{Q}|
+|\nabla^{k}\widehat{\bf u}||\nabla^{k+2}\widehat{Q}|\big)|\nabla\phi^2|\nonumber\\
&&\le\int_{\mathbb R^3} \frac12({|\nabla^k\widehat{\bf u}|^2}+|\nabla^{k+1}\widehat{Q}|^2)(|\partial_t(\phi^2)|+
|\nabla^2(\phi^2)|)\nonumber\\
&&+C\int_{\mathbb R^3} |\nabla^{k-1}\widehat{P}| (|\nabla^{k+1}\widehat{\bf u}||\nabla (\phi^2)|
+|\nabla^{k}\widehat{\bf u}||\nabla^2(\phi^2)|)\nonumber\\
&&+\frac12\int_{\mathbb R^3} (|\nabla^{k+1}\widehat{\bf u}|^2+|\nabla^{k+2}\widehat{Q}|^2)\phi^2
+C\int_{\mathbb R^3}\big(|\nabla^{k}\widehat{\bf u}|^2+|\nabla^{k+1}\widehat{Q}|^2\big)|\nabla\phi|^2,
\end{eqnarray*}
which implies that
\begin{eqnarray}\label{ID5}
&&\frac{d}{dt}\int_{\mathbb R^3} (|\nabla^k\widehat{\bf u}|^2+{|\nabla^{k+1}\widehat{Q}|^2})\phi^2 
+\int_{\mathbb R^3} (|\nabla^{k+1}\widehat{\bf u}|^2+|\nabla^{k+2}\widehat{Q}|^2)\phi^2\nonumber\\
&&\le C \int_{\mathbb R^3} ({|\nabla^k\widehat{\bf u}|^2}+|\nabla^{k+1}\widehat{Q}|^2)(|\partial_t(\phi^2)|+
|\nabla^2(\phi^2)|)\nonumber\\
&&\ +C\int_{\mathbb R^3} |\nabla^{k-1}\widehat{P}| (|\nabla^{k+1}\widehat{\bf u}||\nabla (\phi^2)|
+|\nabla^{k}\widehat{\bf u}||\nabla^2(\phi^2)|)\nonumber\\
&&\ +C\int_{\mathbb R^3}\big(|\nabla^{k}\widehat{\bf u}|^2+|\nabla^{k+1}\widehat{Q}|^2\big)|\nabla\phi|^2.
\end{eqnarray}
It follows from the second equation of \eqref{limit-eqn} that  $\nabla^\beta\widehat{P}$ solves
\begin{equation}\label{P-equation4}
\Delta(\nabla^\beta \widehat{P})=-{\rm{div}}^2\big[ \overline{Q}\Delta(\nabla^\beta \widehat{Q})-\Delta(\nabla^\beta \widehat{Q})\overline{Q} \big]=0, \ {\rm{in}}\ B_\frac38(0),
\end{equation}
where we have applied Lemma \ref{freed2}. Hence by the standard theory of linear elliptic equations, 
\begin{equation}\label{P-est5}
\int_{\mathbb P_\frac14(0)}|\nabla^k \widehat{P}|^2
\le C\int_{\mathbb P_\frac13(0)}|\nabla^{k-1}\widehat{P}|^2.
\end{equation}
By choosing suitable test functions $\phi$, it is not hard to see that \eqref{P-est5} and \eqref{ID5} imply that for $k\ge 0$, 
\begin{eqnarray}\label{ID6}
&&\sup_{-\frac1{16}\le t\le 0}\int_{B_\frac14(0)}
(|\nabla^k\widehat{\bf u}|^2+|\nabla^{k+1}\widehat{Q}|^2)
+\int_{\mathbb P_\frac14(0)}(|\nabla^{k+1}\widehat{\bf u}|^2+|\nabla^{k+2}\widehat{Q}|^2+|\nabla^{k}\widehat{P}|^2)
\nonumber\\
&&\ \le C\int_{\mathbb P_\frac38(0)}(|\nabla^{k}\widehat{\bf u}|^2+|\nabla^{k+1}\widehat{Q}|^2+|\nabla^{k-1}\widehat{P}|^2)
\end{eqnarray}
It is clear that with suitable adjustment of radius, applying  (\ref{ID6} inductively on $k$ yields that
\begin{eqnarray}\label{ID7}
&&\sup_{-\frac1{16}\le t\le 0}\int_{B_\frac14(0)}
(|\nabla^k\widehat{\bf u}|^2+|\nabla^{k+1}\widehat{Q}|^2)
+\int_{\mathbb P_\frac14(0)}(|\nabla^{k+1}\widehat{\bf u}|^2+|\nabla^{k+2}\widehat{Q}|^2+|\nabla^{k}\widehat{P}|^2)
\nonumber\\
&&\ \le C\int_{\mathbb P_\frac38(0)}(|\nabla\widehat{\bf u}|^2+|\nabla^{2}\widehat{Q}|^2+|\nabla\widehat{P}|^2),
\ \forall k\ge 1.
\end{eqnarray}

With \eqref{ID7}, we can apply the regularity theory for both the linear Stokes equation and the linear parabolic equation
to conclude that $(\widehat{\bf u}, \widehat{Q})\in C^\infty(\mathbb P_\frac14(0))$.
Furthermore, applying the elliptic estimate for the pressure equation \eqref{P-eqn} we see that
$\nabla^k\widehat{P}\in C^0(\mathbb P_\frac14(0))$ for any $k\ge 1$. For $l\ge 1$, 
taking $t$-derivative $\partial_t^l$  of both sides of
\eqref{P-eqn}, we can also see that $\nabla^k\partial_t^l \widehat{P}\in C^0(\mathbb P_\frac14(0))$.
Therefore $(\widehat{\bf u}, \widehat{Q}, \widehat{P})\in C^\infty(\mathbb P_\frac14(0))$ and the estimate \eqref{decay} holds. This completes the proof of Lemma \ref{reg-limit-eqn}.
\end{proof}

Now we can iterate Lemma \ref{small-reg} and utilize the Reisz potential estimates in Morrey spaces
to obtain the following  $\varepsilon_0$-regularity.

\begin{Lemma}\label{small-reg1} 
For any $M>0$, there exists $\varepsilon_0>0$, depending on $M$,
such that if $({\bf u},Q,P)$ is a suitable weak solution of \eqref{e1}  in $\O\times (0,\infty)$, which satisfies,
for $z_0=(x_0,t_0)\in \O\times (r_0^2, \infty)$ and 
\begin{equation}\label{bound11}
\begin{cases} |Q|\le M & {\rm{if}}\ F_{\rm{bulk}}=F_{\rm{LdG}} \ {\rm{and}}\ \Omega=\mathbb R^3,\\
|G_{\rm{BM}}(Q)|\le M &  {\rm{if}}\ F_{\rm{bulk}}=F_{\rm{BM}}\ {\rm{and}}\ \Omega=\mathbb T^3,
\end{cases} \ \ {\rm{in}}\ \mathbb P_{r_0}(z_0),
\end{equation}
and 
\begin{equation}\label{small-cond11}
r_0^{-2}\int_{\mathbb P_{r_0}(z_0)} (|{\bf u}|^3+|\nabla Q|^3)\,dxdt +\Big(r_0^{-2}\int_{\mathbb P_{r_0}(z_0)} |P|^\frac32\,dxdt\Big)^2\le\varepsilon_0^3,
\end{equation}
then for any $1<p<\infty$, $(\u, P, \nabla Q)\in L^p(\mathbb P_{\frac{r_0}4}(z_0))$ and
\begin{equation}\label{lpestimate}
\big\|(\u, P, \nabla Q)\big\|_{L^p(\mathbb P_{\frac{r_0}4}(z_0))}\le C(p, \varepsilon_0, M).
\end{equation}
\end{Lemma}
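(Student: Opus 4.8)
The plan is to bootstrap from the single-scale decay estimate of Lemma \ref{small-reg} to a power decay of $\Phi_{z_*}(r)$ at all small scales and all nearby points, then convert this Morrey-type decay into genuine $L^p$ integrability of $(\u,P,\nabla Q)$ via Riesz potential estimates. First I would fix $M>0$, let $\varepsilon_0$, $\tau_0$, $C_0$ be the constants from Lemma \ref{small-reg}, and shrink $\varepsilon_0$ if necessary. Under hypotheses \eqref{bound11}--\eqref{small-cond11}, the pointwise bound on $|Q|$ (resp.\ $|G_{\rm{BM}}(Q)|$, which by the logarithmic divergence of $G_{\rm{BM}}$ forces $Q$ into a strictly physical subdomain $\mathcal{D}_{\delta_0}$) persists on every sub-cylinder $\mathbb P_r(z_*)\subset\mathbb P_{r_0}(z_0)$, so the structural hypothesis \eqref{bound} of Lemma \ref{small-reg} is available at every such $(z_*,r)$. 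The smallness condition \eqref{small-cond} is monotone in a suitable sense: for $z_*\in \mathbb P_{r_0/2}(z_0)$ and $r\le r_0/2$ one has $\Phi(z_*,r)\le C \Phi(z_0,r_0)\le C\varepsilon_0^3$, and after one more shrinking of $\varepsilon_0$ this is $\le$ the $\varepsilon_0$ of Lemma \ref{small-reg}. Hence I can apply Lemma \ref{small-reg} at scale $r$, then at $\tau_0 r$, etc., and an elementary iteration of the recursive inequality \eqref{small-decay} (of the form $\Phi(z_*,\tau_0 r)\le \frac12\max\{\Phi(z_*,r),C_0 r^3\}$) yields, for some $\theta\in(0,1)$ and all $z_*\in\mathbb P_{r_0/2}(z_0)$ and $0<r\le r_0/2$,
\begin{equation*}
\Phi(z_*,r)\le C(M)\Big(\frac{r}{r_0}\Big)^{3\theta}\big(\Phi(z_0,r_0)+C_0 r_0^3\big)\le C(M)\Big(\frac{r}{r_0}\Big)^{3\theta}.
\end{equation*}

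Next I would record the consequence in terms of parabolic Morrey norms. The decay of $\Phi$ says precisely that $|\u|^3$, $|\nabla Q|^3$ lie in the parabolic Morrey space $M^{1,3\theta}$ (i.e.\ $r^{-2}\int_{\mathbb P_r}(\cdots)\lesssim r^{3\theta}$ after normalizing), and $|P|^{3/2}$ lies in $M^{1,3\theta/2}$, on a fixed sub-cylinder, say $\mathbb P_{r_0/3}(z_0)$. Equivalently $(\u,\nabla Q)\in M^{3,2-3\theta}$ and $P\in M^{3/2,2-3\theta/2}$ in the parabolic metric. Now I use the equations. From the $Q$-equation \eqref{e1}$_1$, writing $\nabla Q = \nabla(\text{something bounded plus }\overline Q)$ and using the bound on $Q$ and on $f_{\rm{bulk}}(Q)$, the map $v:=\nabla Q$ solves a linear parabolic system whose right-hand side is a parabolic divergence of terms controlled by $|\u||\nabla Q|$, $|\nabla\u||Q|$, and $|\nabla f_{\rm{bulk}}(Q)|$, all of which (by Hölder in Morrey spaces and the already-established Morrey bounds, together with the $L^2_tH^1_x$ bound on $\u$ and $L^2_tH^2_x$ bound on $Q$ from the energy inequality) lie in a Morrey space with a better exponent. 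Similarly $\u$ solves a Stokes-type system, and $P$ is recovered from the Poisson equation $-\Delta P=\Dv^2(\u\otimes\u+\nabla Q\otimes\nabla Q-\frac12|\nabla Q|^2 I_3)$ using Lemma \ref{freed2} to kill the antisymmetric $Q$-stress contribution. Representing solutions by the parabolic heat (resp.\ Oseen/Riesz) kernel and invoking the standard boundedness of the parabolic Riesz potential $I_1$ between Morrey spaces (as in Huang--Wang \cite{HW}, Hineman--Wang \cite{HLW1}, Huang--Lin--Wang \cite{HLW}), one gains one derivative's worth of integrability: each application raises the Morrey exponent by a definite amount. Iterating this Riesz-potential bootstrap finitely many times pushes $(\u,P,\nabla Q)$ into $M^{p,\lambda}$ for arbitrarily large $p$ on successively smaller cylinders, and in particular into $L^p(\mathbb P_{r_0/4}(z_0))$ for every $p<\infty$, with the quantitative bound \eqref{lpestimate} depending only on $p$, $\varepsilon_0$, $M$ (tracking constants through the iteration).

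The main obstacle, and the point requiring the most care, is the presence of third-order terms $\Dv(Q\Delta Q - \Delta Q\, Q)$ in the $\u$-equation: naively this term is too rough to treat as a forcing in a linear Stokes estimate, since it contains $\nabla^3 Q$. The resolution is exactly the cancellation structure exploited in Lemmas \ref{L1} and \ref{freed2}: in the pressure Poisson equation the antisymmetric $Q$-stress contributes $\Dv^2(Q\Delta Q-\Delta Q\,Q)=0$ and so drops out, leaving the clean source $\Dv^2(\u\otimes\u+\nabla Q\otimes\nabla Q-\frac12|\nabla Q|^2 I_3)$; and in the velocity equation itself one integrates by parts to move one derivative onto the test function, so that effectively only $|\nabla Q||\nabla^2 Q|$-type quantities (which are controlled, via the local energy inequality and the Morrey bounds, after further Hölder interpolation with the $L^2$-bounds on $\nabla^2 Q$) enter. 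A secondary technical point is that in the Ball--Majumdar case one must first confirm $Q\in\mathcal{D}_{\delta_0}$ and hence that $f_{\rm{BM}}(Q)$ and its derivatives are bounded on $\mathbb P_{r_0}(z_0)$ — this is \eqref{bound00} and follows from \eqref{bound11} plus the blow-up of $G_{\rm{BM}}$ near $\partial\mathcal{D}$ — after which the two cases are handled uniformly. Finally, the bookkeeping of how many Riesz iterations are needed to reach exponent $p$, and that each keeps the constant under control in terms of $\varepsilon_0$ and $M$, is routine but must be done to assert the stated dependence of $C(p,\varepsilon_0,M)$.
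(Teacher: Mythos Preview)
Your proposal is correct and follows essentially the same route as the paper's proof: iterate Lemma~\ref{small-reg} to obtain the power decay of $\Phi(z_*,r)$ uniformly for $z_*\in\mathbb P_{r_0/2}(z_0)$, upgrade this to parabolic Morrey membership of $(\u,\nabla Q,P)$ (and, via the local energy inequality, of $(\nabla\u,\nabla^2 Q)$), and then run the Riesz potential bootstrap through the heat/Oseen kernel representations exactly as in \cite{HW,HLW1,HLW}, with the pressure handled by \eqref{P-eqn0} and Lemma~\ref{freed2}.

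Two minor remarks on presentation. First, the paper treats the antisymmetric stress $Q(\Delta Q-f_{\rm bulk}(Q))-(\Delta Q-f_{\rm bulk}(Q))Q$ in the $\u$-equation not by integrating by parts to produce $|\nabla Q||\nabla^2 Q|$ terms, but by keeping it in divergence form inside the Stokes source $X$ and invoking the Oseen bound $|\v|\le C\,\mathcal I_1(|X|)$; since $Q$ is bounded, this term contributes $|\Delta Q-f_{\rm bulk}(Q)|$ to $|X|$, which is controlled by the Morrey estimate on $\nabla^2 Q$ coming from the local energy inequality \eqref{lei}. Second, where the paper writes the single-step Riesz gain $M^{3/2,3(1-\alpha)}\to M^{\frac{3(1-\alpha)}{1-2\alpha},3(1-\alpha)}$ and then lets $\alpha\uparrow\tfrac12$, your phrasing of a finite iteration of Riesz steps is the more transparent way to reach arbitrary $L^p$, since the initial $\alpha$ is fixed by $\tau_0$; the content is the same.
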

\begin{proof} From \eqref{small-cond11}, we have
\begin{equation}\label{small-cond12}
\big(\frac{r_0}2\big)^{-2}\int_{\mathbb P_{\frac{r_0}2}(z)} (|{\bf u}|^3+|\nabla Q|^3)\,dxdt +\Big(\big(\frac{r_0}2\big)^{-2}\int_{\mathbb P_{\frac{r_0}2}(z)} |P|^\frac32\,dxdt\Big)^2\le 8\varepsilon_0^3
\end{equation}
holds for any $z\in \mathbb P_{\frac{r_0}2}(z_0)$.  By applying Lemma \ref{small-reg} repeatedly on $\mathbb P_{\frac{r_0}2}(z)$
for $z\in \mathbb P_{\frac{r_0}2}(z_0)$, there are $C_0>0$ and $\tau_0\in (0,\frac12)$ that 
for any  $k\ge 1$, 
\begin{eqnarray}\label{decay10}
&&(\tau_0^{k} r_0)^{-2}\int_{\mathbb P_{\tau_0^kr_0}(z)} (|{\bf u}|^3+|\nabla Q|^3)\,dxdt +\big((\tau_0^{k} r_0)^{-2}\int_{\mathbb P_{\tau_0^kr_0}(z)} |P|^\frac32\,dxdt)^2\\
&&\le 2^{-k}\max\Big\{ (\frac{r_0}2)^{-2}\int_{\mathbb P_{\frac{r_0}2}(z)} (|{\bf u}|^3+|\nabla Q|^3)\,dxdt
+\big((\frac{r_0}2)^{-2}\int_{\mathbb P_{\frac{r_0}2}(z)} |P|^\frac32\,dxdt\big)^2, \nonumber\\
&&\qquad \qquad \qquad \frac{C_0 r_0^3}{1-2\tau_0^3}\Big\}.\nonumber
\end{eqnarray}
Therefore for $\theta_0=\frac{\ln 2}{3|\ln \tau_0|}\in (0,\frac13)$, it holds that
for any $0<s<\frac{r_0}2$ and  $z\in \mathbb P_{\frac{r_0}2}(z_0)$
\begin{equation}\label{morrey1}
s^{-2}\int_{\mathbb P_{s}(z)} (|{\bf u}|^3+|\nabla Q|^3+ |P|^\frac32)\,dxdt
\le C(1+\varepsilon_0^3)\big(\frac{s}{r_0}\big)^{3\theta_0}.
\end{equation}
By \eqref{bound11} and Lemma \ref{max0}, there exists $C>0$, depending on $M$, such that
\begin{equation}\label{Qbound}
|Q|+|f_{\rm{bulk}}(Q)|+|\nabla_Qf_{\rm{bulk}}(Q)|\le C\  {\rm{in}}\ \mathbb P_{r_0}(z_0).
\end{equation}
Now we can apply the local energy inequality \eqref{lei} for $(\u, P, Q)$ on $\mathbb P_{\frac{r_0}2}(z)$, for $z\in \mathbb P_{\frac{r_0}2}(z_0)$,  to get
that for $0<s<\frac{r_0}2$,
\begin{equation}\label{morrey2}
\begin{split}
&s^{-1}\int_{\mathbb P_s(z)} (|\nabla u|^2+|\D Q|^2)\,dxdt \\
&\le C\Big[ (2s)^{-3}\int_{\mathbb P_{2s}(z)} (|u|^2+|\nabla Q|^2)
+(2s)^{-2}\int_{\mathbb P_{2s}(z)} (|u|^3+|\nabla Q|^3+|P|^\frac32)\\
&\qquad+(2s)^{-2}\int_{\mathbb P_{2s}(z)} |u|+(2s)^{-1}\int_{\mathbb P_{2s}(z)} |\nabla Q|^2\Big]\\
&\le C(1+\varepsilon_0^3)\big(\frac{s}{r_0}\big)^{2\theta_0}.
\end{split}
\end{equation}

Next we employ the estimate of Reisz potentials in Morrey spaces to prove the smoothness of $(\u, P, Q)$ near $z_0$, analogous to
that by Huang-Wang \cite{HW}, Hineman-Wang \cite{HLW1}, and Huang-Lin-Wang \cite{HLW}.  

For any open set $U\subset\mathbb R^3\times \mathbb R$, $1\le p<\infty$, and $0\le\lambda\le 5$, define
the Morrey space $M^{p,\lambda}(U)$ by
$$
M^{p,\lambda}(U)
:=\Big\{ f\in L^p_{\rm{loc}}(U):\ \big\|f\big\|_{M^{p,\lambda}(U)}^p 
=\displaystyle\sup_{z\in U, r>0} r^{\lambda-5}\int_{\mathbb P_r(z)} |f|^p\,dxdt<\infty\Big\}.
$$
It follows from \eqref{morrey1} and \eqref{morrey2} that there exists $\alpha\in (0,1)$ such that
$$(\u, \nabla Q)\in M^{3, 3(1-\alpha)}\big(\mathbb P_{\frac{r_0}2}(z_0)\big), 
\ P\in M^{\frac32, 3(1-\alpha)}\big(\mathbb P_{\frac{r_0}2}(z_0)\big), 
\ (\nabla \u, \nabla^2 Q)\in M^{2, 4-2\alpha}\big(\mathbb P_{\frac{r_0}2}(z_0)\big).
$$
Write \eqref{e2}$_1$ as
\begin{equation}\label{e5.1}
\partial_t Q-\D Q= f,  \  \ f\equiv-\u\cdot\nabla Q+\omega Q-Q\omega -f_{\rm{bulk}}(Q)\in M^{\frac32, 3(1-\alpha)}\big(\mathbb P_{\frac{r_0}2}(z_0)\big).
\end{equation}
Let $\eta\in C_0^\infty(\mathbb R^4)$ be a cut off function of $\mathbb P_{\frac{r_0}2}(z_0)$ such that 
$0\le\eta\le 1$, $\eta=1$ in $\mathbb P_{\frac{r_0}2}(z_0)$, $|\partial_t\eta|+|\nabla^2\eta|\le Cr_0^2$, 
Set $w=\eta^2 (Q-Q_{z_0, r_0})$, where $Q_{z_0,r_0}$ is the average of $Q$ over $\mathbb P_{\frac{r_0}2}(z_0)$. Then
\begin{equation}\label{e5.2}
\partial_t w-\D w= F, \ \ F:=\eta^2 f+(\partial_t\eta^2-\D\eta^2) (Q-Q_{z_0, r_0})- \nabla\eta^2 \cdot\nabla Q.
\end{equation}
We can check that $F\in M^{\frac32, 3(1-\alpha)}(\R^4)$ and satisfies
\begin{equation}\label{morrey4}
\big\|F\big\|_{M^{\frac32, 3(1-\alpha)}(\R^4)}\le C(1+\varepsilon_0).
\end{equation}
Let $\Gamma$ denote the heat kernel in $\R^3$. Then
$$|\nabla\Gamma|(x,t)\le C\delta^{-4}((x,t), (0,0)), \ \forall (x,t)\not=(0,0),$$
where $\delta(\cdot, \cdot)$ denotes the parabolic distance on $\R^4$.
By the Duhamel formula, we have that
\begin{equation}\label{duhamel1}
|w(x,t)|\le \int_0^t\int_{\R^3} |\nabla \Gamma(x-y, t-s)||F(y,s)|\,dyds\le C\mathcal{I}_1(|F|)(x,t),
\end{equation}
where $\mathcal{I}_\beta$ is the Reisz potential of order $\beta$ on $\R^4$, $\beta\in [0,4]$, defined by
$$
\mathcal{I}_\beta(g)(x,t)=\int_{\R^4}  \frac{|g(y,s)|}{\delta^{5-\beta}((x,t), (y,s))}\,dyds, \ \forall g\in L^1(\R^4).
$$
Applying the Reisz potential estimates (see \cite{HW}  Theorem 3.1), we conclude that $\nabla w\in M^{\frac{3(1-\alpha)}{1-2\alpha}, 3(1-\alpha)}(\R^4)$
and 
\begin{equation}\label{morrey5}
\Big\|\nabla w\Big\|_{M^{\frac{3(1-\alpha)}{1-2\alpha}, 3(1-\alpha)}(\R^4)} \le C \Big\|F\Big\|_{M^{\frac32, 3(1-\alpha)}(\R^4)}\le C(1+\varepsilon_0).
\end{equation}
Since $\displaystyle\lim_{\alpha\uparrow \frac12}\frac{3(1-\alpha)}{1-2\alpha}=\infty$, we conclude that for any $1<p<\infty$, 
$\nabla w\in L^p(\mathbb P_{r_0}(z_0))$ and
\begin{equation}\label{lpestimate1}
\big\|\nabla w\big\|_{L^p(\mathbb P_{r_0}(z_0))}\le C(p, r_0, \varepsilon_0). 
\end{equation}
Since $Q-w$ solves
$$\partial_t(Q-w)-\D(Q-w)=0 \ \ \ {\rm{in}}\ \ \ \mathbb P_{\frac{r_0}2}(z_0),$$
it follows from the theory of heat equations that for any $1<p<\infty$, $\nabla Q \in \mathbb P_{\frac{r_0}2}(z_0)$ and
\begin{equation}\label{lpestimate2}
\big\|\nabla Q\big\|_{L^p(\mathbb P_{\frac{r_0}2}(z_0))}\le C(p, r_0, \varepsilon_0). 
\end{equation} 

We now proceed with the estimation of $\u$. Let $\v:\R^3\times (0,\infty)\mapsto \R^3$ solve the Stokes equation:
\begin{equation}\label{stokes}
\begin{cases}
\partial_t\v -\Delta\v +\nabla P= - \Dv\big[\eta^2\big(\u\otimes\u +(\nabla Q\otimes\nabla Q-\frac12|\nabla Q|^2I_3)\big)\big]\\
\qquad+\Dv\big[\eta^2(Q(\D Q-f_{\rm{bulk}}(Q))-  (\D Q-f_{\rm{bulk}}(Q))Q)\big]   & {\rm{in}}\ \R^4_+,\\
\Dv\v=0 & {\rm{in}}\ \R^4_+,\\
\v(\cdot, 0)=0 & {\rm{in}}\ \R^3.
\end{cases}
\end{equation}
By using the Oseen kernel (see Leray \cite{Leray}), an estimate of $\v$ can be given by 
\begin{equation}\label{duhamel2}
|\v(x,t)|\le C\mathcal{I}_1(|X|)(x,t), \ \forall (x,t)\in\R^3\times (0,\infty),
\end{equation}
where 
$$X=\eta^2\big[\u\otimes\u +(\nabla Q\otimes\nabla Q-\frac12|\nabla Q|^2I_3)
+(Q(\D Q-f_{\rm{bulk}}(Q))-  (\D Q-f_{\rm{bulk}}(Q))Q)\big].
$$
As above, we can check that $X\in M^{\frac32, 3(1-\alpha)}(\R^4)$ and
\begin{equation*}
\begin{split}
\big\|X\big\|_{M^{\frac32, 3(1-\alpha)}(\R^4)}&\le C\Big[\|\u\|_{M^{3, 3(1-\alpha)}(\mathbb P_{\frac{r_0}2}(z_0))}^2+
\|\nabla Q\|_{M^{3, 3(1-\alpha)}(\mathbb P_{\frac{r_0}2}(z_0))}^2\\
&\ \ \ \ \ \ +\|\D Q-f_{\rm{bulk}}(Q)\|_{M^{3, 3(1-\alpha)}(\mathbb P_{\frac{r_0}2}(z_0))}\Big]\\
&\le C(1+\varepsilon_0).
\end{split}
\end{equation*}
Hence we conclude that  $\v\in M^{\frac{3(1-\alpha)}{1-2\alpha}, 3(1-\alpha)}(\R^4)$
and 
\begin{equation}\label{morrey6}
\Big\|\v\Big\|_{M^{\frac{3(1-\alpha)}{1-2\alpha}, 3(1-\alpha)}(\R^4)} \le C \Big\|X\Big\|_{M^{\frac32, 3(1-\alpha)}(\R^4)}\le C(1+\varepsilon_0).
\end{equation}
As $\alpha\uparrow \frac12$, we conclude that for any $1<p<\infty$, 
$\v\in L^p(\mathbb P_{r_0}(z_0))$ and
\begin{equation}\label{lpestimate3}
\big\|\v\big\|_{L^p(\mathbb P_{r_0}(z_0))}\le C(p, r_0, \varepsilon_0). 
\end{equation}
Note that $\u-\v$ solves the linear homogeneous Stokes equation in $\mathbb P_{\frac{r_0}2}(z_0)$:
$$\partial_t(\u-\v)-\D(\u-\v)+\nabla P=0, \ \Dv(\u-\v)=0 \ \ {\rm{in}}\ \ \mathbb P_{\frac{r_0}2}(z_0).$$
Then $\u-\v\in L^\infty(\mathbb P_{\frac{r_0}4}(z_0))$. Therefore for any $1<p<\infty$,
$\u\in L^p(\mathbb P_{\frac{r_0}4}(z_0))$ and
\begin{equation}\label{lpestimate4}
\big\|\u\big\|_{L^p(\mathbb P_{\frac{r_0}4}(z_0)}\le C(p, r_0, \varepsilon_0).
\end{equation}
For $P$, since it satisfies the Poisson equation: for $t_0-\frac{r_0^2}4\le t\le t_0$, 
\begin{equation}\label{P-equation5}
-\D P=\Dv^2\big[\u\otimes\u+(\nabla Q\otimes\nabla Q-\frac12|\nabla Q|^2I_3)\big] \ \ {\rm{in}}\ \ B_{\frac{r_0}2}(x_0).
\end{equation} 
Hence $P\in L^p(\mathbb P_{\frac{r_0}4}(z_0))$ and satisfies the \eqref{lpestimate}.  The proof is now complete. \end{proof}
 
The higher order regularity of \eqref{e2} does not follow from the standard theory, since the equation for $\u$ involves $\nabla^3Q$ and
 the equation for $Q$ involves $\nabla\u$. It turns out the higher order regularity of \eqref{e2} can be obtained through higher oder 
 energy methods. Roughly speaking, if $(\u, P, \nabla Q)$ is in $L^p$ for any $1<p<\infty$, then \eqref{e2} can be viewed as a perturbed version of the linear equation \eqref{limit-eqn} with controllable error terms. Here higher order
versions of the cancellation properties \eqref{can2} and \eqref{free-div2} in the local energy inequality \eqref{lei} also plays an important role.
This kind of idea has been previously employed by Huang-Lin-Wang (see \cite{HLW} Lemma 3.4) for general Ericksen-Leslie systems in dimension two. 
 More precisely, we have 
 \begin{Lemma} \label{small-reg2} Under the same assumptions as Lemma \ref{small-reg1}, we have that for any $k\ge 0$, 
 $(\nabla^k\u, \nabla^{k+1}Q)\in \big(L^\infty_tL^2_x\cap L^2_tH^1_x\big)(\mathbb P_{\frac{1+2^{-(k+1)}}{2}r_0}(z_0))$ and the following
 estimates hold
 \begin{equation}\label{higherenergy1}
 \begin{split}
& \sup_{t_0-\big(\frac{(1+2^{-(k+1)})}{2}r_0\big)^2\le t\le t_0}\int_{B_{\frac{1+2^{-(k+1)}}{2}r_0}(x_0)} (|\nabla^k\u|^2
+|\nabla^{k+1}Q|^2)\,dx \\
&\qquad +\int_{\mathbb P_{\frac{1+2^{-(k+1)}}{2}r_0}(z_0)} (|\nabla^{k+1}\u|^2
+|\nabla^{k+2}Q|^2+|\nabla^k P|^{\frac53})\,dxdt\\
&\qquad\le C(k, r_0,) \varepsilon_0.
\end{split}
\end{equation}
In particular, $(\u, Q)$ is smooth in $\mathbb P_{\frac{r_0}4}(z_0)$.
\end{Lemma}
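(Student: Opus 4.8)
The plan is to prove \eqref{higherenergy1} by induction on $k$ via higher-order local energy estimates for the Beris--Edwards system \eqref{e1}, using the $L^p$-integrability supplied by Lemma \ref{small-reg1}, the pointwise bounds $|Q|+|f_{\rm{bulk}}(Q)|+|\nabla_Qf_{\rm{bulk}}(Q)|\le C$ recorded in \eqref{Qbound} (which come from the maximum principles, Lemma \ref{max1} and Lemma \ref{max0}), and the same structural cancellation that underlies the local energy inequality \eqref{lei}; the smoothness of $(\u,Q)$ then follows by parabolic Sobolev embedding together with the equations. Write $\rho_k:=\frac{1+2^{-(k+1)}}{2}r_0$; these form a nested, strictly decreasing sequence, and on all the cylinders involved Lemma \ref{small-reg1} gives $\u,P,\nabla Q\in L^p$ for every $1<p<\infty$.

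For the base case $k=0$ I would apply the local energy inequality \eqref{lei} with a cutoff $\phi\ge0$ equal to $1$ on $\mathbb{P}_{\rho_0}(z_0)$ and supported in a slightly larger cylinder: every term on the right-hand side of \eqref{lei} is then estimated by H\"older's inequality against the $L^p$-norms of $\u,\nabla Q,P$ and the $L^\infty$-bounds on $Q$ and $f_{\rm{bulk}}(Q)$, with the only term carrying $\nabla\u$ absorbed by Young's inequality into the dissipation on the left, yielding control of $\sup_t\int(|\u|^2+|\nabla Q|^2)\phi+\int(|\nabla\u|^2+|\Delta Q|^2)\phi$ by $C\varepsilon_0$; the full $|\nabla^2Q|^2$ is recovered from $|\Delta Q|^2$ modulo cutoff terms by integration by parts, and $\|P\|_{L^{5/3}}$ comes from the Poisson equation \eqref{P-equation5} and the Calder\'on--Zygmund estimate using $\u,\nabla Q\in L^{10/3}$.

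For the inductive step, assume \eqref{higherenergy1} holds at all orders $\le k-1$. Differentiate the $Q$-equation $k+1$ times and the $\u$-equation $k$ times, exactly in the pattern of \eqref{limit-eqn1}, and test the former against $\nabla^{k+1}Q\,\phi^2$ and the latter against $\nabla^k\u\,\phi^2$, with $\phi\in C_0^\infty(\mathbb{P}_{\rho_{k-1}}(z_0))$, $\phi\equiv1$ on $\mathbb{P}_{\rho_k}(z_0)$, and add. The decisive point is that the two terms carrying the top-order derivatives $\nabla^{k+2}Q$ and $\nabla^{k+1}\u$ of the coupling --- the piece of $\nabla^{k+1}(\omega Q-Q\omega)$ in which all derivatives fall on $\omega$, tested against $\nabla^{k+1}Q$ after one integration by parts, and the piece of $\Dv\nabla^k\big(Q(\Delta Q-f_{\rm{bulk}}(Q))-(\Delta Q-f_{\rm{bulk}}(Q))Q\big)$ in which all derivatives fall on $\Delta Q$, tested against $\nabla^k\u$ after integrating the $\Dv$ by parts --- cancel, by the higher-order analogue of the cancellation identity \eqref{can2}, using only the symmetry of $Q$ and of $\nabla^{k+2}Q$ and the skew-symmetry of $\Omega(\nabla^k\u)$; this is precisely where the intrinsic structure is used, since neither of these two terms is individually absorbable. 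What then remains splits into: (i) commutator terms in which at least one derivative has landed on a factor of $Q$ or on $f_{\rm{bulk}}(Q)$, so that the worst factor is $\nabla^{k+1}Q$ or $\nabla^k\u$ multiplied by strictly lower-order factors; (ii) the advection terms $\nabla^{k+1}(\u\cdot\nabla Q)$ and $\nabla^k(\u\cdot\nabla\u)$; and (iii) the pressure terms, which, as in \eqref{ID4}, are integrated by parts so that only $\nabla^{k-1}P$ appears, and $\nabla^{k-1}P$ is controlled by the elliptic estimate for the Poisson equation $\Delta(\nabla^{k-1}P)=-\Dv^2\nabla^{k-1}\big[\u\otimes\u+(\nabla Q\otimes\nabla Q-\frac12|\nabla Q|^2I_3)\big]$, valid thanks to the higher-order form of \eqref{free-div2} (Lemma \ref{freed2}). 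All of (i)--(iii) are estimated by Leibniz, H\"older, and Gagliardo--Nirenberg interpolation, using the inductive hypothesis together with the $L^p$-bounds of Lemma \ref{small-reg1}; a fraction of $\int(|\nabla^{k+1}\u|^2+|\nabla^{k+2}Q|^2)\phi^2$ is absorbed by Young's inequality, and Gronwall's inequality in $t$ followed by integration gives \eqref{higherenergy1} at order $k$, the $\|\nabla^kP\|_{L^{5/3}}$-term again coming from the Poisson equation and Calder\'on--Zygmund.

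Finally, once \eqref{higherenergy1} holds for every $k\ge0$, the bounds $\nabla^k\u,\nabla^kQ\in L^\infty_t L^2_x$ on $\mathbb{P}_{\frac{r_0}4}(z_0)$ for all $k$ give, by Sobolev embedding, $\u(\cdot,t),Q(\cdot,t)\in C^\infty$ uniformly in $t$; differentiating the equations in $t$ and using the spatial bounds together with the smoothness of $\nabla P$ from \eqref{P-equation5} shows that all mixed space-time derivatives are locally bounded, so $(\u,Q)\in C^\infty(\mathbb{P}_{\frac{r_0}4}(z_0))$. I expect the main obstacle to be exactly the inductive step: establishing the higher-order cancellation and then bookkeeping the surviving terms so that each one is genuinely of lower order or absorbable, which is the higher-order counterpart of the cancellations \eqref{can2} and \eqref{free-div2} in the local energy inequality, carried out in the spirit of Huang--Lin--Wang \cite{HLW}, Lemma 3.4.
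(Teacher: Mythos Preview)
Your proposal is correct and follows essentially the same route as the paper: induction on $k$ via higher-order local energy estimates with cutoffs, the structural cancellation \eqref{can2} to eliminate the unabsorbable top-order coupling terms, the $L^p$-bounds from Lemma \ref{small-reg1} and interpolation to close the Gronwall argument, and finally Sobolev embedding plus bootstrapping for smoothness. The paper only writes out the case $k=1$ in detail (and then refers to \cite{HLW} Lemma 3.4 for general $k$), whereas you describe the general inductive step, but the mechanism is identical.
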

\begin{proof} For simplicity, assume $z_0=(0,0)$ and $r_0=8$. \eqref{higherenergy1} can be proved by
an induction on $k$. It is clear that when $k=0$,   \eqref{higherenergy1} follows directly from the local energy 
inequality \eqref{lei}. Here we indicate how to prove \eqref{higherenergy1} for $k=1$.
First, recall from Lemma \ref{small-reg1} that for any $i\in \mathbb N^+$
and $1<p<\infty$, 
\begin{equation} \label{prebound}
\big\|Q\big\|_{L^\infty(\mathbb P_2)}+ \big\|\nabla^i f_{\rm{bulk}}(Q)\big\|_{L^\infty(\mathbb P_2)}\le C(i, \varepsilon_0), 
\  \big\|(\u,P, \nabla Q)\big\|_{L^p(\mathbb P_2)}\le C(p) \varepsilon_0. 
\end{equation}
Taking spatial derivative of \eqref{e1}\footnote{Strictly speaking, we need to take finite quotient $D_h^j$
of \eqref{e1} $(j=1,2,3$) and then sending $h\to 0$}, we have 
\begin{equation}\label{e6.1}
\begin{cases}
&\partial_tQ_\alpha+\u\cdot\nabla Q_\alpha +\u_\alpha\cdot\nabla Q-\omega_\alpha Q+Q\omega_\alpha-\omega Q_\alpha
+Q_\alpha\omega\\
&=\D Q_\alpha-(f_{\rm{bulk}}(Q))_\alpha,\\
&\partial_t \u_\alpha +\u\cdot\nabla \u_\alpha+\u_\alpha\cdot\nabla \u+\nabla P_\alpha
=\D \u_\alpha-\nabla Q\D Q_\alpha-\nabla Q_\alpha \Delta Q\\
&\qquad\qquad\qquad\quad+\Dv \big[Q(\Delta Q-f_{\rm{bulk}}(Q))-(\Delta Q-f_{\rm{bulk}}(Q))Q\big]_\alpha, \\
&\Dv\u_\alpha=0,
\end{cases} \ {\rm{in}}\  \mathbb P_1.
\end{equation}
Here $\omega_\alpha=\omega(\u_\alpha)$. Let $\eta\in C_0^\infty(B_2)$ be such that 
$$0\le\eta\le 1, \ \eta\equiv1 \ {\rm{in}}\ B_{1+2^{-2}},\  \eta\equiv 0 \ {\rm{out}}\ B_{1+2^{-1}}, \ |\nabla\eta|+|\nabla^2\eta|\le 16.$$
Taking $\nabla$ of \eqref{e6.1}$_1$ and multiplying it by $\nabla Q_\alpha\eta^2$, and multiplying \eqref{e6.1}$_2$ by $\nabla\u_\alpha \eta^2$,  and then integrating resulting equations over $B_2$\footnote{strictly speaking, we need to multiply $\Delta (D^j_h Q)\eta^2$ and $\nabla(D_h^j\u)\eta^2$
and then sending $h\to 0$}, we obtain that
\begin{eqnarray*}
&&\frac12\frac{d}{dt}\int_{\O} |\nabla^2 Q|^2\eta^2-\int_{\R^3}(\u_\alpha\cdot\nabla) Q\cdot \Delta Q_\alpha\eta^2
-\int_{\O} (\u\cdot\nabla) Q_\alpha \cdot(\D Q_\alpha\eta^2+\nabla Q_\alpha\nabla\eta^2)\\
&&-\int_{\O}(\u_\alpha\cdot\nabla) Q\cdot\nabla Q_\alpha \nabla \eta^2-\int_{\O} (-\omega_\alpha Q+Q\omega_\alpha)\cdot(\D Q_\alpha\eta^2+\nabla Q_\alpha\nabla\eta^2)\\
&&=\int_{\O} \big[(-\omega Q_\alpha +Q_\alpha \omega)
- (\D Q_\alpha-(f_{\rm{bulk}}(Q))_\alpha)\big]\cdot(\D Q_\alpha\eta^2+\nabla Q_\alpha\nabla\eta^2),
\end{eqnarray*} 
and
\begin{eqnarray*}
&&\frac12\frac{d}{dt}\int_{\O} |\nabla \u|^2\eta^2-\int_{\O}\frac{|\nabla\u|^2}2 \u\cdot\nabla\eta^2
+\int_{\O}(\u_\alpha\cdot\nabla) \u\cdot \u_\alpha\eta^2 -\int_{\O}P_\alpha\u_\alpha\cdot\nabla\eta^2\\
&&=-\int_{\O}(|\nabla^2 \u|^2\eta^2-\frac{|\nabla\u|^2}{2}\D \eta^2)-\int_{\O}((\u_\alpha\cdot\nabla) Q\cdot\D Q_\alpha\eta^2
+(\u_\alpha\cdot\nabla) Q_\alpha\cdot\Delta Q\eta^2)\\
&&\ \ \ -\int_{\O}[Q_\alpha (\Delta Q-f_{\rm{bulk}}(Q))
-(\Delta Q-f_{\rm{bulk}}(Q))Q_\alpha]\cdot(\nabla\u_\alpha\eta^2+\u_\alpha\otimes\nabla\eta^2)\\
&&\ \ \ -\int_{\O}[Q (\Delta Q-f_{\rm{bulk}}(Q))_\alpha-(\Delta Q-f_{\rm{bulk}}(Q))_\alpha Q]\cdot(\nabla\u_\alpha\eta^2+\u_\alpha\otimes\nabla\eta^2).
\end{eqnarray*} 
Adding these two equations together and regrouping terms, and using the cancellation identity
$$
\int_{\O}(-\omega_\alpha Q+Q\omega_\alpha)\cdot \D Q_\alpha \eta^2
=\int_{\O} (Q\D Q_\alpha-\D Q_\alpha Q)\cdot \nabla \u_\alpha\eta^2,
$$
we arrive at
\begin{eqnarray*}
&&\frac12\frac{d}{dt}\int_{\O}(|\nabla \u|^2+ |\nabla^2 Q|^2)\eta^2+\int_{\O} (|\nabla^2\u|^2+|\Delta\nabla Q|^2)\eta^2\\
&&=\int_{\O} [(\u\cdot\nabla) Q_\alpha\cdot (\D Q_\alpha\eta^2+\nabla Q_\alpha\nabla\eta^2)+(\u_\alpha\cdot)\nabla Q\cdot\nabla Q_\alpha \nabla \eta^2]\\
&&+\int_{\O} (-\omega_\alpha Q+Q\omega_\alpha-\D Q_\alpha):\nabla Q_\alpha\nabla\eta^2\\
&&+\int_{\O} \big[(-\omega Q_\alpha +Q_\alpha \omega)
+(f_{\rm{bulk}}(Q))_\alpha\big]:(\D Q_\alpha\eta^2+\nabla Q_\alpha\nabla\eta^2)\\
&&+ \int_{\O}[\frac{|\nabla\u|^2}2 (\D  \eta^2+\u\cdot\nabla\eta^2)
-\u_\alpha\cdot(\nabla \u\cdot \u_\alpha+\nabla Q_\alpha: \D Q)\eta^2 +P_\alpha \u_\alpha\cdot\nabla\eta^2]\\
&&-\int_{\O}[Q_\alpha (\Delta Q-f_{\rm{bulk}}(Q))-(\Delta Q-f_{\rm{bulk}}(Q))Q_\alpha]:(\nabla\u_\alpha\eta^2+\u_\alpha\otimes\nabla\eta^2)\\
&& -\int_{\O}[Q (\Delta Q-f_{\rm{bulk}}(Q))_\alpha-(\Delta Q-f_{\rm{bulk}}(Q))_\alpha Q]:\u_\alpha\otimes\nabla\eta^2\\
&& +\int_{\O}[Q f_{\rm{bulk}}(Q)_\alpha-f_{\rm{bulk}}(Q))_\alpha Q]:(\nabla\u_\alpha\eta^2+\u_\alpha\otimes\nabla\eta^2)
:=\sum_{i=1}^7 A_i.
\end{eqnarray*}
We can estimate $A_i$'s separately as follows.
$$
|A_7|\le \frac1{16} \int_{\O}|\nabla^2 \u|^2\eta^2+C\int_{\O}(|\nabla Q|^2(|\eta|^2+|\nabla\eta|^2) +|\nabla\u|^2\eta^2),
$$
$$|A_6|\le \frac1{16} \int_{\O}|\D\nabla Q|^2\eta^2+C\int_{\O}(|\nabla Q|^2\eta^2+|\nabla\u|^2(\eta^2+|\nabla\eta|^2),$$
$$|A_5|\le \frac1{16} \int_{\O}|\nabla^2 \u|^2\eta^2+C\int_{\O}|\nabla Q|^2|\D Q|^2\eta^2
+C\int_{\O}|\nabla \u|^2|\nabla\eta|^2,
$$
\begin{equation*}
\begin{split}
|A_4|&\le \frac18\int_{\O} (|\nabla^2 \u|^2+|\D\nabla Q|^2)\eta^2
+C\int_{\O}[|\nabla\u|^2|\D\eta^2|+|\u|^2(|\nabla\u|^2+|\D Q|^2)\eta^2]\\
&\quad+C\int_{\O} (|\nabla\u|^2+|\D Q|^2)|\nabla\eta|^2+C\int_{\O} (|P|^2|\nabla\eta|^2+|P||\nabla\u||\D\eta^2|),
\end{split}
\end{equation*}
\begin{equation*}
\begin{split}
|A_3|&\le \frac1{16}\int_{\O} |\D\nabla Q|^2\eta^2
+C\int_{\O}|\nabla Q|^2(|\nabla\u|^2+|\D Q|^2)\eta^2\\
&+C\int_{\O} (|\nabla Q|^2\eta^2+|\nabla\u|^2|\nabla\eta|^2),
\end{split}
\end{equation*}
\begin{equation*}
\begin{split}
|A_2|&\le \frac1{16}\int_{\O} |\D\nabla Q|^2\eta^2
+C\int_{\O}(|\nabla\u|^2+|\D Q|^2)|\nabla\eta|^2,
\end{split}
\end{equation*}
\begin{equation*}
\begin{split}
|A_1|&\le \frac1{16}\int_{\O} |\D\nabla Q|^2\eta^2
+C\int_{\O}[(|\u|^2|+|\nabla Q|^2)\D Q|^2\eta^2+(|\nabla\u|^2+|\D Q|^2)|\nabla\eta|^2].
\end{split}
\end{equation*}
Substituting these estimates on $A_i$'s into the above inequality, we obtain that
\begin{eqnarray*}
&&\frac{d}{dt}\int_{\O}(|\nabla \u|^2+ |\nabla^2 Q|^2)\eta^2+\int_{\O} (|\nabla^2\u|^2+|\Delta\nabla Q|^2)\eta^2\\
&&\le C \int_{B_{1+2^{-1}}}(|\u|^2+ |\nabla Q|^2+|\nabla\u|^2+|\D Q|^2+|P|^2)\\
&&\ \ \ +C\int_{\O} (|\u|^2|\nabla\u|^2+|\u|^2|\D Q|^2+|\nabla Q|^2|\D Q|^2+|\nabla Q|^2|\nabla\u|^2)\eta^2.
\end{eqnarray*}
Now we want to estimate the second term in the right hand side. By Sobolev-interpolation inequalities, we have
\begin{equation*}
\begin{split}
&\int_{\O} |\u|^2|\nabla\u|^2\eta^2\\
&\le \|\nabla\u \eta\|_{L^2(\O)} \|\nabla\u \eta\|_{L^3(\O)} \|\u\|_{L^{12}(B_{1+2^{-1}})}^2\\
&\le C \|\nabla\u \eta\|_{L^2(\O)}\|\nabla\u \eta\|_{L^2(\O)}^\frac12\|\nabla(\nabla\u \eta)\|_{L^2(\O)}^\frac12
 \|\u\|_{L^{12}(B_{1+2^{-1}})}^2\\
 &\le C \|\nabla\u \eta\|_{L^2(\O)}\|\nabla(\nabla\u \eta)\|_{L^2(\O)}\|\u\|_{L^{12}(B_{1+2^{-1}})}^2\\
 &\le  \frac18 \int_{\O} |\nabla^2 \u|^2\eta^2+C\int_{B_{1+2^{-1}}}|\nabla\u|^2
 + C\|\u\|_{L^{12}(B_{1+2^{-1}})}^4\int_{\O}|\nabla\u|^2\eta^2, 
\end{split}
\end{equation*}
\begin{equation*}
\begin{split}
\int_{\O} |\u|^2|\D Q|^2\eta^2
 &\le  \frac18 \int_{\O} |\D\nabla Q|^2\eta^2+C\int_{B_{1+2^{-1}}}|\D Q|^2\\
& + C\|\u\|_{L^{12}(B_{1+2^{-1}})}^4\int_{\O}|\D Q|^2\eta^2, 
\end{split}
\end{equation*}
\begin{equation*}
\begin{split}
\int_{\O} |\nabla Q|^2|\D Q|^2\eta^2
 &\le  \frac18 \int_{\O} |\D\nabla Q|^2\eta^2+C\int_{B_{1+2^{-1}}}|\D Q|^2\\
 &+ C\|\nabla Q\|_{L^{12}(B_{1+2^{-1}})}^4\int_{\O}|\D Q|^2\eta^2, 
\end{split}
\end{equation*}
and 
\begin{equation*}
\begin{split}
\int_{\O} |\nabla Q|^2|\nabla\u|^2\eta^2
 &\le  \frac18 \int_{\O} |\nabla\u|^2\eta^2+C\int_{B_{1+2^{-1}}}|\nabla\u|^2\\
 &+ C\|\nabla Q\|_{L^{12}(B_{1+2^{-1}})}^4\int_{\O}|\nabla\u|^2\eta^2.
\end{split}
\end{equation*}
Substituting these estimates into the above inequality, we would arrive at
\begin{eqnarray}\label{higherenergy2}
&&\frac{d}{dt}\int_{\O}(|\nabla \u|^2+ |\nabla^2 Q|^2)\eta^2+\int_{\O} (|\nabla^2\u|^2+|\Delta\nabla Q|^2)\eta^2\nonumber\\
&&\le C \int_{B_{1+2^{-1}}}(|\u|^2+ |\nabla Q|^2+|\nabla\u|^2+|\D Q|^2+|P|^2)\nonumber\\
&&\ \ \ +C(1+\|(\u, \nabla Q)\|_{L^{12}(B_{1+2^{-1}})}^{12}) \int_{\O}(|\nabla \u|^2+ |\nabla^2 Q|^2)\eta^2.
\end{eqnarray}
From \eqref{prebound}, we can apply Gronwall's inequality to \eqref{higherenergy2} to show that
\eqref{higherenergy1} holds for $k=1$. For $k\ge 2$, we can perform an induction argument as in
\cite{HLW} Lemma 3.4. We leave the details to interested readers. 

It is readily seen that by the Sobolev embedding theorem, Lemma \ref{small-reg1} implies that 
$(\nabla^k u, \nabla^{k+1}Q) \in L^\infty(\mathbb P_{\frac{r_0}4}(z_0))$ for any $k\ge 1$.
This, combined with the theory of linear
Stokes equation and heat equation, would imply the smoothness of $(\u, Q)$ in 
$\mathbb P_{\frac{r_0}4}(z_0)$. This completes the proof.
\end{proof}

Applying Lemma \ref{small-reg1}, we can prove a weaker version of Theorem \ref{main}.

\begin{Proposition}\label{weakmain} Under the same assumptions as in Theorem \ref{main},
there exists a closed subset $\Sigma\subset\Omega\times (0,\infty)$, with
$\mathcal{P}^\frac53(\Sigma)=0$, such that $(\u, Q)\in C^\infty(\O\times (0,\infty)\setminus\Sigma)$.
\end{Proposition}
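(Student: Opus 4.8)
The plan is to define the singular set $\Sigma$ as the complement of the set of smooth points of $(\u,Q)$, so that it is automatically closed, and then to bound its parabolic Hausdorff measure through a Vitali covering argument based on the global higher integrability of $(\u,\nabla Q,P)$ together with the $\varepsilon_0$-regularity of Lemmas \ref{small-reg1}--\ref{small-reg2}.

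First I would record the two global integrability facts that drive the estimate. From the global energy inequality established in \S2--\S3 (see \eqref{6.1} and \eqref{3.2.100}) one has $\u,\nabla Q\in L^\infty_tL^2_x\cap L^2_tH^1_x$, whence the Gagliardo--Nirenberg interpolation inequality gives $\u,\nabla Q\in L^{\frac{10}3}\big(\Omega\times(0,T)\big)$ for every $T<\infty$; and the Calder\'on--Zygmund estimates of \S3 (see \eqref{pestimate} and \eqref{pestimate1}) give $P\in L^{\frac53}\big(\Omega\times(0,T)\big)$. Set $F:=|\u|^{\frac{10}3}+|\nabla Q|^{\frac{10}3}+|P|^{\frac53}\in L^1_{\rm loc}\big(\Omega\times(0,\infty)\big)$. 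For any $0<\delta<T$, the maximum principle — Lemma \ref{max1} in the Landau--De Gennes case, where in fact $|Q|\le M$ throughout $\mathbb R^3\times(0,\infty)$, and \eqref{uniform bound3} in the Ball--Majumdar case — provides $M=M(\delta)$ with $|Q|\le M$ (resp. $|G_{\rm BM}(Q)|\le M$) on $\Omega\times[\delta,\infty)$; let $\varepsilon_0=\varepsilon_0(M)>0$ be the threshold of Lemma \ref{small-reg1}. If $z_0=(x_0,t_0)\in\Omega\times(\delta,T)$ fails to be a smooth point of $(\u,Q)$, then by Lemmas \ref{small-reg1} and \ref{small-reg2} it must be that
$$\Phi(z_0,r):=r^{-2}\!\int_{\mathbb P_r(z_0)}\!(|\u|^3+|\nabla Q|^3)\,dxdt+\Big(r^{-2}\!\int_{\mathbb P_r(z_0)}\!|P|^{\frac32}\,dxdt\Big)^2>\varepsilon_0^3$$
for every $0<r<\sqrt{t_0-\delta}$. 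Since the set of points near which $(\u,Q)$ is $C^\infty$ is manifestly open, $\Sigma$ is closed in $\Omega\times(0,\infty)$ and $(\u,Q)\in C^\infty\big(\Omega\times(0,\infty)\setminus\Sigma\big)$.

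It remains to show $\mathcal P^{\frac53}(\Sigma)=0$, and by the countable subadditivity of $\mathcal P^{\frac53}$ it suffices to prove $\mathcal P^{\frac53}(\Sigma_{\delta,T})=0$ for $\Sigma_{\delta,T}:=\Sigma\cap\big(\Omega\times(\delta,T)\big)$ (and, when $\Omega=\mathbb R^3$, after an additional intersection with a fixed spatial ball so that all sets involved have finite measure). The key is a density lower bound: if $z_0\in\Sigma_{\delta,T}$ then for each small $r$ one of the two summands in $\Phi(z_0,r)$ exceeds $\tfrac12\varepsilon_0^3$, and H\"older's inequality together with $|\mathbb P_r(z_0)|\simeq r^5$ converts either alternative into
$$\int_{\mathbb P_r(z_0)}F\,dxdt\ \ge\ c_0\,r^{\frac53},\qquad 0<r<\sqrt{t_0-\delta},$$
for some $c_0=c_0(\varepsilon_0)>0$. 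Because $r^5=o\big(r^{\frac53}\big)$ as $r\downarrow0$, the Lebesgue differentiation theorem then forces $|\Sigma_{\delta,T}|=0$, so $\int_{U_\rho}F\to0$ as $\rho\downarrow0$, where $U_\rho$ is the $\rho$-parabolic-neighbourhood of $\Sigma_{\delta,T}$. Finally, the Vitali $5r$-covering lemma in the parabolic metric gives, for each $\rho>0$, pairwise disjoint cylinders $\{\mathbb P_{r_i}(z_i)\}\subset U_\rho$ with $z_i\in\Sigma_{\delta,T}$, $r_i<\rho$, and $\Sigma_{\delta,T}\subset\bigcup_i\mathbb P_{5r_i}(z_i)$, whence
$$\mathcal P^{\frac53}_{5\rho}(\Sigma_{\delta,T})\le\sum_i(5r_i)^{\frac53}\le 5^{\frac53}c_0^{-1}\sum_i\int_{\mathbb P_{r_i}(z_i)}F=5^{\frac53}c_0^{-1}\!\int_{\bigcup_i\mathbb P_{r_i}(z_i)}\!F\le 5^{\frac53}c_0^{-1}\!\int_{U_\rho}\!F\ \xrightarrow[\rho\to0]{}\ 0 .$$
Hence $\mathcal P^{\frac53}(\Sigma_{\delta,T})=0$, and letting $\delta\downarrow0$, $T\uparrow\infty$ (and exhausting $\mathbb R^3$ by balls) yields $\mathcal P^{\frac53}(\Sigma)=0$.

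The substantive analytic input — the $\varepsilon_0$-regularity and the higher-order energy estimates behind it — has already been carried out in Lemmas \ref{small-reg1} and \ref{small-reg2}, so the main remaining difficulty here is bookkeeping rather than analysis: one must track the dependence of $\varepsilon_0$ and $M$ on $\delta$ in the Ball--Majumdar case, where the pointwise control of $G_{\rm BM}(Q)$ required to invoke Lemma \ref{small-reg1} is only available on $\mathbb T^3\times[\delta,\infty)$ via \eqref{uniform bound3}, and one must verify that the exhaustion $\Sigma=\bigcup_{j}\big(\Sigma\cap(\Omega\times(1/j,j))\big)$ together with subadditivity of $\mathcal P^{\frac53}$ indeed yields the global statement. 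The one concrete inequality that is essential is the elementary $r^5=o(r^{\frac53})$: it is exactly what upgrades ``the $\varepsilon_0$-condition fails at every small scale'' into ``$\Sigma_{\delta,T}$ is Lebesgue-null'', which in turn unlocks the absolute-continuity step that closes the covering argument.
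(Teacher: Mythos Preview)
Your proposal is correct and follows essentially the same approach as the paper: both use the global bounds $(\u,\nabla Q)\in L^{10/3}$, $P\in L^{5/3}$ together with the $\varepsilon_0$-regularity of Lemmas \ref{small-reg1}--\ref{small-reg2} and the H\"older step that converts failure of \eqref{small-cond11} into a density lower bound of order $r^{5/3}$ for $F=|\u|^{10/3}+|\nabla Q|^{10/3}+|P|^{5/3}$. The only difference is cosmetic: the paper packages the conclusion by citing a standard covering argument (\cite{S}) for the set $\mathcal{S}_\delta$ where $\liminf_{r\to0}r^{-5/3}\int_{\mathbb P_r}F>\varepsilon_0^{10/3}$, whereas you spell out the Vitali $5r$-covering and the Lebesgue-differentiation step $|\Sigma_{\delta,T}|=0$ explicitly.
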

\begin{proof} First it follows from Lemma \ref{max1} and Lemma \ref{max0} that for any $\delta>0$,
$Q$ and $f_{\rm{BM}}(Q)$ are bounded in $\Omega\times (\delta,\infty)$. Define
$$\Sigma_\delta=\Big\{z\in\Omega\times (\delta,\infty):
\ \liminf_{r\to 0} r^{-2}\int_{\mathbb P_{r}(z)} (|\u|^3+|\nabla Q|^3)\,dxdt
+\big(r^{-2}\int_{\mathbb P_{r}(z)} |P|^\frac32\,dxdt\big)^2>\varepsilon_0^3\Big\}.
$$
From Lemma \ref{small-reg1}, we know that $\Sigma_\delta$ is closed and
$(\u, Q)\in C^\infty(\Omega\times (\delta,\infty)\setminus \Sigma_\delta)$. 
Since $\delta>0$ is arbitrary, we have that $(\u, Q)\in C^\infty(\O\times(0,\infty)
\setminus \cup_{\delta>0} \Sigma_\delta)$.

Since $u \in L^\infty_tL^2_x\cap L^2_tH^1_x(\O\times (0,\infty))$ and
$\nabla Q \in L^\infty_tH^1_x\cap L^2_tH^2_x(\O\times (0,\infty))$, we see that
$(\u,\nabla Q)\in L^{\frac{10}{3}}(\O\times (0,\infty))$. Moreover, since $P$ solves
the Poisson equation \eqref{P-equation4} in $\Omega\times (0,\infty)$, we conclude
that $P\in L^\frac53(\O\times (0,\infty))$. By H\"older's inequality, we see that
$\Sigma_\delta$ is  a subset of
\begin{equation*}
\begin{split}
\mathcal{S}_\delta=\Big\{z\in \O\times (\delta, \infty):&
\ \ \liminf_{r\to 0} r^{-\frac53}\int_{\mathbb P_{r}(z)} (|\u|^{\frac{10}3}+|\nabla Q|^{\frac{10}3})\,dxdt\\
&+\big(r^{-\frac53}\int_{\mathbb P_{r}(z)} |P|^\frac53\,dxdt\big)^2>\varepsilon_0^{\frac{10}3}\Big\}.
\end{split}
\end{equation*}
A simple covering argument implies that $\mathcal{P}^\frac53(\mathcal{S}_\delta)=0$, see \cite{S}. Hence
$\Sigma=\cup_{\delta>0}\Sigma_\delta$ has $\mathcal{P}^\frac53(\Sigma)=0$. This completes
the proof.
\end{proof}

\section{Partial regularity, part II}

In this section, we will utilize the results from the previous section
and the Sobolev inequality to first show the so-called
A-B-C-D Lemmas (see \cite{CKN} and \cite{LIN})
and then establish an improved $\varepsilon_1$-regularity property
for suitable weak solutions to \eqref{e1}.

\begin{Theorem}\label{thm:strongregularity} Under the same assumptions as in Theorem \ref{main},
there exists $\varepsilon_1>0$ such that if
$({\bf u}, Q):\O\times (0,\infty) \mapsto \mathbb R^3\times \mathcal{S}_0^{(3)}$ is a suitable
weak solution of \eqref{e}, which satisfies, for  $z_0\in\Omega\times (0,\infty)$,
\begin{equation}\label{small_cond1}
\limsup_{r\to 0}\f{1}{r}\int_{\mathbb P_r(z_0)}\big(|\nabla\u|^2+|\nabla^2 Q|^2\big)dxdt<\varepsilon_1^2,
\end{equation}
then $({\bf u}, Q)$ is smooth near $z_0$.
\end{Theorem}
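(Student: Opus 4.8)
The plan is to bootstrap from the hypothesis \eqref{small_cond1}, which controls the scaled $L^2$-norm of $(\nabla\u,\nabla^2 Q)$, down to the hypothesis \eqref{small-cond11} of Lemma \ref{small-reg1}, which controls the scaled $L^3$-norm of $(\u,\nabla Q)$ together with the scaled $L^{3/2}$-norm of $P$. The mechanism is the family of ``A-B-C-D'' quantities of Caffarelli-Kohn-Nirenberg \cite{CKN}, adapted to the Beris-Edwards system. First I would fix $z_0$ (and, for the Ball-Majumdar case, restrict to $t > \delta$ so that the maximum principles of \S4 guarantee $|Q| + |f_{\rm{bulk}}(Q)| + |\nabla_Q f_{\rm{bulk}}(Q)| \le C$ near $z_0$, exactly as in \eqref{Qbound}; this makes all bulk-potential terms lower-order). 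Then I would introduce the dimensionless quantities
$$
A(r) = r^{-1}\sup_{t_0-r^2\le t\le t_0}\int_{B_r(x_0)}(|\u|^2+|\nabla Q|^2),\quad
B(r) = r^{-1}\int_{\mathbb P_r(z_0)}(|\nabla\u|^2+|\nabla^2 Q|^2),
$$
$$
C(r) = r^{-2}\int_{\mathbb P_r(z_0)}(|\u|^3+|\nabla Q|^3),\quad
D(r) = r^{-2}\int_{\mathbb P_r(z_0)}|P|^{3/2},
$$
and establish the four inequalities of CKN: (i) an interpolation estimate $C(r) \le C[A(r)^{3/2} + A(r)^{3/4}B(r)^{3/4}]$ via the parabolic Sobolev inequality applied to $(\u - \bar\u, \nabla Q - \overline{\nabla Q})$ plus the Poincar\'e inequality; (ii) a pressure estimate $D(\tau r) \le C\tau D(r) + C\tau^{-2}C(r)$ obtained by splitting $P = P_1 + P_2$ where $P_1$ solves the Poisson equation \eqref{P-equation5} on $B_r$ with the harmless $Q$-term killed by Lemma \ref{freed2}, and $P_2$ is harmonic, so $P_2$ decays and $P_1$ is bounded by Calder\'on-Zygmund; (iii) the local energy inequality \eqref{lei} applied with a suitable cutoff at scale $r$, which after Cauchy-Schwarz on the new coupling terms $\nabla Q\otimes\nabla Q:\u\otimes\nabla\phi$, $(Q\Delta Q - \dots)\cdot\u\otimes\nabla\phi$, $(\omega Q - Q\omega):\nabla Q\nabla\phi$, and $\nabla(f_{\rm{bulk}}(Q))\cdot\nabla Q\,\phi$ yields $A(r/2) + B(r/2) \le C[C(2r)^{2/3} + C(2r) + D(2r) + \text{(lower order)}]$.

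The second step is the iteration. Combining (i)-(iii) with the absorption of the lower-order bulk terms (which contribute only positive powers of $r$, hence decay), one derives a closed inequality of the form
$$
A(\tau r) + B(\tau r) + C(\tau r) + D(\tau r) \le C\tau^{\gamma}\,\big(A(r)+B(r)+C(r)+D(r)\big) + C\tau^{-\beta}\big(A(r)+B(r)+C(r)+D(r)\big)^{3/2} + C r^{\sigma}
$$
for suitable exponents $\gamma, \beta, \sigma > 0$; choosing $\tau$ small and then using the smallness of $\limsup_{r\to 0} B(r) < \varepsilon_1^2$ (which forces $A, C, D$ also eventually small by (i) and (ii)), a standard iteration shows that $C(r) + D^2(r)$ becomes smaller than $\varepsilon_0^3$ at some small radius $r_1$, possibly after first noting $D(r)^2 \le C D(r)$ once $D$ is bounded. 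At that point Lemma \ref{small-reg1} applies at $z_0$ with this $r_1$, giving $(\u, P, \nabla Q) \in L^p(\mathbb P_{r_1/4}(z_0))$ for all $p<\infty$, and then Lemma \ref{small-reg2} upgrades this to $C^\infty$ smoothness near $z_0$.

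The main obstacle I expect is step (iii): deriving the A-B estimate from the local energy inequality \eqref{lei} with the correct scaling. Unlike Navier-Stokes, \eqref{lei} carries the extra terms involving $\Delta Q$, $\omega Q - Q\omega$, and $\nabla(f_{\rm{bulk}}(Q))$, and the term $(Q\Delta Q - (\Delta Q - f_{\rm{bulk}}(Q))Q)\cdot\u\otimes\nabla\phi$ contains $\Delta Q$ coupled to $\u$, which is borderline in the scaling (it is the analogue of the $P\u$ term). The cancellation identity \eqref{can2} is essential here: it is what allows the $\omega Q - Q\omega$ term to be absorbed against $\nabla\u$ rather than leaving an uncontrolled contribution, so the energy estimate closes at the critical scaling. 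Making the constants in this step track correctly through the rescaling, and verifying that the $\Delta Q$-containing coupling term can be handled by Young's inequality against $\int |\nabla^2 Q|^2\phi$ plus lower-order pieces (using $|Q|\le C$), is the delicate bookkeeping that the whole argument rests on; I would treat the remaining A, B, C, D lemmas as essentially routine adaptations of \cite{CKN} and \cite{LIN}.
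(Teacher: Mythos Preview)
Your proposal is essentially the paper's own approach: the CKN-style $A$-$B$-$C$-$D$ iteration, with the local energy inequality \eqref{lei} supplying the $A$-estimate, interpolation supplying the $C$-estimate, a harmonic/Calder\'on-Zygmund split supplying the $D$-estimate, and the hypothesis $\limsup B(r)<\varepsilon_1^2$ driving the iteration down to the threshold of Lemma~\ref{small-reg1}. Two points of precision are worth flagging. First, the paper's interpolation and pressure lemmas are genuinely \emph{two-scale}: one proves $C(r)\lesssim (r/\rho)^3 A^{3/2}(\rho)+(\rho/r)^3 A^{3/4}(\rho)B^{3/4}(\rho)$ and $D(r)\lesssim (r/\rho)D(\rho)+(\rho/r)^2 A^{3/4}(\rho)B^{3/4}(\rho)$, where the $A^{3/4}B^{3/4}$ (rather than $C$) on the right of the pressure estimate comes from subtracting the spatial means $(\u)_\rho$, $(\nabla Q)_\rho$ before applying Calder\'on-Zygmund; the gain factors $(r/\rho)^3$ and $r/\rho$ are what make the iteration close. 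Second, the paper does not iterate the full sum $A+B+C+D$ but the specific combination $A^{3/2}(\rho)+D^2(\rho)$, obtaining $A^{3/2}(\tfrac12\theta_0\rho)+D^2(\tfrac12\theta_0\rho)\le c(\theta_0^2+\theta_0^{-8}B^{3/2}(\rho))(A^{3/2}(\rho)+D^2(\rho))+c\theta_0^2$; since $B(\rho)$ is small by hypothesis, the coefficient is $<1$ and the iteration contracts. Your claim that smallness of $B$ ``forces $A,C,D$ eventually small by (i) and (ii)'' is too quick without this particular coupling, but once you set up the two-scale lemmas and iterate $A^{3/2}+D^2$, the rest of your outline (including the Young-inequality absorption of the $|\u||\Delta Q|$ cross-term using $|Q|\le C$) matches the paper exactly.
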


For simplicity, we assume $z_0=(0,0)\in\O\times (0,\infty)$.  To streamline the presentation, we introduce the following
dimensionless quantities:
\begin{align*}
  A(r)&:=\sup_{-r^2\leq t\leq 0}r^{-1}\int_{B_r(0)\times\left\{ t \right\}}(|{\bf u}|^2+|\nabla Q|^2)\,dx, \\
  B(r)&:=\frac{1}{r}\int_{\mathbb{P}_r(0,0)}(|\nabla {\bf u}|^2+|\nabla^2 Q|^2)\, dxdt,\\
  C(r)&:=\frac{1}{r^2}\int_{\mathbb{P}_r(0,0)}(|{\bf u}|^3+|\nabla Q|^3)\,dxdt, \\
  D(r)&:=r^{-2}\int_{\mathbb{P}_r(0,0)}|P|^{\frac{3}{2}}\, dxdt.
\end{align*}
Also set
$$({\bf u})_r(t):=\frac{1}{|B_r(0)|}\int_{B_r(0)}{\bf u}(x,t)\,dx, (\nabla Q)_r(t):=\frac{1}{|B_r(0)|}\int_{B_r(0)}\nabla Q(x, t)\,dx.$$ 

We recall the following interpolation Lemma, whose proof can be found in [1] and [3].
\begin{Lemma}
  For $v\in H^1(\R^3)$, 
  \begin{eqnarray}
    \int_{B_r(0)}|v|^q(x, t)\,dx&\lesssim& \big( \int_{B_r(0)}|\nabla v|^2(x, t)\, dx \big)^{\frac{q}{2}-a}
    \big( \int_{B_r(0)}|v|^2(x,t)\,dx \big)^a\nonumber\\
    &&+r^{3\big( 1-\frac{q}{2} \big)}\big( \int_{B_r(0)}|v|^2(x, t)\,dx \big)^{\frac{q}{2}}. 
    \label{}
  \end{eqnarray}
  for every $B_r(0)\subset\R^3$, $2\leq q\leq 6$, $a=\frac{3}{2}\big( 1-\frac{q}{6} \big).$
  \label{eqn:interpolation}
\end{Lemma}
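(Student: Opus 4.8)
The plan is to reduce the inequality to the model case $r=1$ by a scaling argument, and to settle that case by combining the Sobolev embedding on the unit ball with an elementary interpolation between $L^2$ and $L^6$. The time variable is inert here, so I fix $t$ and drop it from the notation. Two identities drive the entire computation: from $a=\frac32(1-\frac q6)$ one has $2a=3-\frac q2$, and, introducing the interpolation parameter $\theta:=\frac{6-q}{2q}\in[0,1]$ defined by $\frac1q=\frac{\theta}{2}+\frac{1-\theta}{6}$, a direct check gives $q\theta=2a$ and $q(1-\theta)=q-2a=2(\frac q2-a)\ge0$ for $2\le q\le6$.

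First I would prove the case $r=1$. Since $B_1(0)$ is a bounded Lipschitz domain, $H^1(B_1)\hookrightarrow L^6(B_1)$, so $\|v\|_{L^6(B_1)}\le C(\|\nabla v\|_{L^2(B_1)}+\|v\|_{L^2(B_1)})$. The log-convexity of Lebesgue norms gives $\|v\|_{L^q(B_1)}\le\|v\|_{L^2(B_1)}^{\theta}\|v\|_{L^6(B_1)}^{1-\theta}$, hence, raising to the power $q$ and using $q\theta=2a$, $q(1-\theta)=2(\frac q2-a)$,
\begin{equation*}
\int_{B_1}|v|^q=\|v\|_{L^q(B_1)}^q\le\big(\|v\|_{L^2(B_1)}^2\big)^{a}\,\big(\|v\|_{L^6(B_1)}^2\big)^{\frac q2-a}.
\end{equation*}
Bounding $\|v\|_{L^6(B_1)}^2\le C(\|\nabla v\|_{L^2(B_1)}^2+\|v\|_{L^2(B_1)}^2)$ and using $(x+y)^s\le 2^s(x^s+y^s)$ with $s=\frac q2-a\ge0$, then $a+(\frac q2-a)=\frac q2$, yields the claimed inequality with $r=1$.

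For a general radius I would rescale: set $v_r(y):=v(ry)$ on $B_1(0)$, apply the $r=1$ estimate to $v_r$, and change variables $x=ry$, using $\int_{B_1}|v_r|^q=r^{-3}\int_{B_r}|v|^q$, $\int_{B_1}|\nabla v_r|^2=r^{-1}\int_{B_r}|\nabla v|^2$, and $\int_{B_1}|v_r|^2=r^{-3}\int_{B_r}|v|^2$. Multiplying the resulting inequality by $r^3$ and collecting powers of $r$, the exponent of $r$ in front of the gradient term becomes $3-(\frac q2-a)-3a=3-\frac q2-2a=0$, exactly because $2a=3-\frac q2$, so that term is scale invariant; the exponent in the remaining term is $3-\frac{3q}{2}=3(1-\frac q2)$. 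This is precisely the inequality of Lemma \ref{eqn:interpolation}, with the constant absorbed into $\lesssim$.

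The argument presents no real obstacle; the only point demanding care is the bookkeeping of the three exponents $\theta$, $a$, and the powers of $r$, which all reduce to the identities $q\theta=2a$ and $2a=3-\frac q2$. It is worth noting that the endpoints are degenerate — $q=2$ is trivial ($a=1$, $\frac q2-a=0$) and $q=6$ is the Sobolev inequality itself ($a=0$) — so the content of the Lemma lies in the open range $2<q<6$, where it genuinely interpolates between these two.
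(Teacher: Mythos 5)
Your proof is correct. Note that the paper does not actually prove this lemma: it merely recalls the inequality and points to the literature (the Caffarelli--Kohn--Nirenberg and Lin references), so there is no internal argument to compare against. The route you take --- the Sobolev embedding $H^1(B_1)\hookrightarrow L^6(B_1)$ on the unit ball, the Lebesgue interpolation $\|v\|_{L^q(B_1)}\le \|v\|_{L^2(B_1)}^{\theta}\|v\|_{L^6(B_1)}^{1-\theta}$ with $\theta=\frac{6-q}{2q}$, and then the rescaling $v_r(y)=v(ry)$ --- is exactly the standard proof behind those citations, and your exponent bookkeeping checks out: $q\theta=2a$, $q(1-\theta)=2(\frac q2-a)=\frac32(q-2)\ge 0$, and the identity $2a=3-\frac q2$ makes the gradient term scale invariant while the zeroth-order term collects the factor $r^{3(1-\frac q2)}$. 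The additive $L^2$ term is genuinely needed because no mean-zero or boundary condition is assumed, and your use of $\|v\|_{L^6(B_1)}\lesssim \|\nabla v\|_{L^2(B_1)}+\|v\|_{L^2(B_1)}$ together with $(x+y)^{s}\le 2^{s}(x^{s}+y^{s})$ for $s=\frac q2-a\ge 0$ produces it in the right form; the constant is uniform in $r$ since it comes from the fixed ball $B_1$. In short: a complete and correct self-contained proof of a statement the paper leaves to references.
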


Applying Lemma \ref{eqn:interpolation}, we can have 
\begin{Lemma}  \label{lem:CAB}
  For any ${\bf u}\in L^{\infty}([-\rho^2, 0], L^2(B_\rho(0)))
  \cap L^2([-\rho^2, 0], H^1(B_\rho(0)))$, and
  $Q\in L^\infty([-\rho^2, 0], H^1(B_\rho(0)))\cap L^2([-\rho^2, 0], H^2(B_\rho(0)))$,
   it holds that for any $0<r\leq \rho$,
  \begin{equation}
    C(r)\lesssim \big( \frac{r}{\rho} \big)^3A^{\frac{3}{2}}(\rho)+\big( \frac{\rho}{r} \big)^3A^{\frac{3}{4}}(\rho)B^{\frac{3}{4}}(\rho). 
    \label{eqn:CAB}
  \end{equation}
\end{Lemma}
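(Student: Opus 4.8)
The plan is to estimate $C(r)$ on the smaller cylinder $\mathbb{P}_r(0)$ by splitting the integrand $|{\bf u}|^3+|\nabla Q|^3$ into a part controlled purely by the $L^\infty_tL^2_x$-norm (which contributes the first term) and a part that uses the $H^1_x$-gain (which contributes the second term). Writing $v$ for either ${\bf u}$ or $\nabla Q$, I would first fix a time slice $t\in[-r^2,0]$ and apply the interpolation inequality of Lemma \ref{eqn:interpolation} with $q=3$, so that $a=\frac{3}{2}(1-\frac12)=\frac34$; this gives
\begin{equation*}
\int_{B_r(0)}|v|^3(x,t)\,dx\lesssim \Big(\int_{B_r(0)}|\nabla v|^2(x,t)\,dx\Big)^{\frac34}\Big(\int_{B_r(0)}|v|^2(x,t)\,dx\Big)^{\frac34}+r^{-\frac32}\Big(\int_{B_r(0)}|v|^2(x,t)\,dx\Big)^{\frac32}.
\end{equation*}
Since $r\le\rho$, the spatial integral of $|v|^2$ over $B_r(0)$ at time $t$ is bounded by $\rho A(\rho)$ by the definition of $A(\rho)$ (note $t\in[-r^2,0]\subset[-\rho^2,0]$). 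Substituting this bound into the last term immediately yields $r^{-\frac32}(\rho A(\rho))^{\frac32}$ after integrating in $t$ over an interval of length $r^2$, which after multiplying by $r^{-2}$ produces a term of order $(\rho/r)^{?}$; I would track the powers carefully here, but the expected outcome is the $(\rho/r)^3 A^{3/4}B^{3/4}$ and $(r/\rho)^3 A^{3/2}$ structure once both terms are processed consistently.

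For the first (main) term I would bound one factor of $\big(\int_{B_r}|v|^2\big)^{3/4}$ by $(\rho A(\rho))^{3/4}$ using $A(\rho)$ again, leaving
\begin{equation*}
\int_{-r^2}^{0}\int_{B_r(0)}|v|^3\,dx\,dt\lesssim (\rho A(\rho))^{3/4}\int_{-r^2}^{0}\Big(\int_{B_r(0)}|\nabla v|^2(x,t)\,dx\Big)^{3/4}dt.
\end{equation*}
Then I would apply Hölder's inequality in $t$ with exponents $\frac43$ and $4$ over the interval $[-r^2,0]$ of length $r^2$: the $|\nabla v|^2$ factor becomes $\big(\int_{\mathbb{P}_r}|\nabla v|^2\big)^{3/4}=(rB(\rho))^{3/4}$ (using $\mathbb{P}_r\subset\mathbb{P}_\rho$ and the definition of $B(\rho)$), and the remaining $t$-integral contributes a factor $(r^2)^{1/4}=r^{1/2}$. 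Collecting: the space-time integral is $\lesssim (\rho A(\rho))^{3/4}(rB(\rho))^{3/4}r^{1/2}=\rho^{3/4}r^{5/4}A^{3/4}(\rho)B^{3/4}(\rho)$. Dividing by $r^2$ gives $\rho^{3/4}r^{-3/4}A^{3/4}B^{3/4}$, and writing $\rho^{3/4}r^{-3/4}=(\rho/r)^{3/4}$ I would then absorb this into the stated $(\rho/r)^3$ using $r\le\rho$ (so $(\rho/r)^{3/4}\le(\rho/r)^3$) — the inequality only needs to hold with $(\rho/r)^3$ on the right, so a crude power adjustment suffices. The same crude adjustment handles the second term: the $r^{-3/2}(\rho A(\rho))^{3/2}$ slice contribution integrates over $[-r^2,0]$ to $r^{-3/2}r^2(\rho A)^{3/2}=r^{1/2}\rho^{3/2}A^{3/2}$, and after dividing by $r^2$ we get $r^{-3/2}\rho^{3/2}A^{3/2}=(\rho/r)^{3/2}A^{3/2}$; since $A(\rho)\lesssim B(\rho)+\text{lower order}$ is \emph{not} available here, I would instead note $(\rho/r)^{3/2}\le(\rho/r)^3$ and this term is dominated by $(r/\rho)^3A^{3/2}$ only if $r\approx\rho$ — so more care is needed, see below.

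The main obstacle is reconciling the scaling-critical bookkeeping: the genuinely dimensionless statement forces the two terms on the right to be $(r/\rho)^3A^{3/2}$ and $(\rho/r)^3A^{3/4}B^{3/4}$ specifically, and the naive slice-by-slice estimate of the $r^{-3/2}(\int|v|^2)^{3/2}$ piece produces $(\rho/r)^{3/2}A^{3/2}$, which is not obviously of either stated form. The standard resolution (as in Caffarelli–Kohn–Nirenberg, Lemma, and Lin) is to subtract the spatial average $(v)_\rho(t)$ before interpolating, i.e. apply Lemma \ref{eqn:interpolation} to $v-(v)_\rho(t)$ so that the Poincaré-type term $r^{3(1-q/2)}(\int|v|^2)^{q/2}$ is instead dressed with the \emph{gradient} norm, and to estimate the average contribution $\int_{B_r}|(v)_\rho(t)|^3\lesssim r^3|(v)_\rho(t)|^3\lesssim r^3\rho^{-3}(\int_{B_\rho}|v|^2)^{3/2}\cdot\rho^{-3/2}$, which after the $t$-integration and division by $r^2$ gives exactly the $(r/\rho)^3A^{3/2}$ term. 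I would therefore restructure the proof around the decomposition $v=(v-(v)_\rho)+(v)_\rho$, apply the interpolation lemma to the mean-zero part (killing the bad Poincaré term), and handle the constant-in-$x$ part by the elementary estimate above; this is the step I expect to require the most care, but it is entirely routine once the decomposition is in place.
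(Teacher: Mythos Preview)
Your diagnosis of the obstacle and your proposed remedy are both correct in spirit, and your route---splitting $v=(v-(v)_\rho)+(v)_\rho$ and treating the constant part directly---is a legitimate variant of what the paper does. The paper instead keeps $v$ intact in the interpolation inequality and upgrades the estimate on $\int_{B_r}|v|^2$ itself: it subtracts the mean of $|v|^2$ (not of $v$) over $B_\rho$, uses Poincar\'e on $|v|^2$ to get $\int_{B_r}|v|^2\lesssim \rho\int_{B_\rho}|v||\nabla v|+(r/\rho)^3\int_{B_\rho}|v|^2$, and then Cauchy--Schwarz on the first piece produces the mixed bound $\rho^{3/2}A^{1/2}(\rho)\big(\int_{B_\rho}|\nabla v|^2\big)^{1/2}$. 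Plugging this into the $r^{-3/2}(\int_{B_r}|v|^2)^{3/2}$ term yields exactly the two stated contributions after time integration. The paper's device has the advantage that the $A^{1/2}$--gradient$^{1/2}$ mixture appears automatically from Cauchy--Schwarz, whereas your decomposition forces you to engineer that mixture by hand.

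This is where your sketch has a small but genuine gap. You say the Poincar\'e term for $w=v-(v)_\rho$ is ``dressed with the gradient norm''; if you use only Poincar\'e, i.e.\ $\int_{B_r}|w|^2\le\int_{B_\rho}|w|^2\lesssim\rho^2\int_{B_\rho}|\nabla v|^2$, the resulting term $r^{-3/2}\rho^3\big(\int_{B_\rho}|\nabla v|^2\big)^{3/2}$ does \emph{not} time-integrate to something controlled by $B(\rho)$, because $\nabla\u$ is only in $L^2_tL^2_x$ and the exponent $3/2>1$ is on the wrong side of Jensen. The fix is to split the power: bound one factor $(\int_{B_r}|w|^2)^{3/4}$ by $(\rho A(\rho))^{3/4}$ via the trivial $L^2$ estimate and the other factor $(\int_{B_r}|w|^2)^{3/4}$ by $(\rho^2\int_{B_\rho}|\nabla v|^2)^{3/4}$ via Poincar\'e. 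Then H\"older in $t$ with exponents $4/3$ and $4$ gives $(\rho/r)^3 A^{3/4}(\rho)B^{3/4}(\rho)$ exactly. Once you insert this hybrid bound, your argument goes through.
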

\begin{proof}
  From \eqref{eqn:interpolation} with $q=3, a=\frac{3}{4}$, we obtain that for any $v\in H^1(B_\rho(0))$, 
  \begin{eqnarray}
    \int_{B_r(0)}|v|^3(x, t)\, dx&\lesssim&\big( \int_{B_r(0)}|\nabla v|^2(x, t)\,dx \big)^{\frac{3}{4}}
    \big( \int_{B_r(0)}|v|^2(x,t)\,dx \big)^{\frac{3}{4}}
    \nonumber\\
    &&+r^{-\frac{3}{2}}\big( \int_{B_r(0)}|v|^2(x, t)\,dx \big)^{3/2}.
    \label{}
  \end{eqnarray}  
Applying Poincar\'e's inequality, we obtain that for $0<r\le \rho$, 
\begin{align*}
 & \int_{B_r(0)}(|{\bf u}|^2+|\nabla Q|^2)\,dx\\
 &\lesssim \int_{B_r(0)}\left(\left||{\bf u}|^2-(|{\bf u}|^2)_{\rho}\right|+\left||\nabla Q|^2-(|\nabla Q|^2)_{\rho}\right|\right)\,dx+\big( \frac{r}{\rho} \big)^3\int_{B_\rho(0)}(|{\bf u}|^2+|\nabla Q|^2)\, dx\\
  &\lesssim \rho\int_{B_\rho(0)}(|{\bf u}||\nabla {\bf u}|+|\nabla Q||\nabla^2 Q|)\,dx
  +\big( \frac{r}{\rho} \big)^3\int_{B_\rho(0)}(|{\bf u}|^2+|\nabla Q|^2)\,dx\\
  &\lesssim\rho^{\frac{3}{2}}\big( \rho^{-1}\int_{B_\rho(0)}(|{\bf u}|^2+|\nabla Q|^2)\,dx \big)^{\frac{1}{2}}
  \big( \int_{B_\rho(0)}(|\nabla {\bf u}|^2
  +|\nabla^2 Q|^2)\,dx \big)^{\frac{1}{2}}\\
  &\quad +\big( \frac{r}{\rho} \big)^3\int_{B_\rho(0)}(|{\bf u}|^2+|\nabla Q|^2)\, dx\\
  &\lesssim \rho^{\frac{3}{2}}A^{\frac{1}{2}}(\rho)\big( \int_{B_\rho(0)}(|\nabla {\bf u}|^2+|\nabla^2 Q|^2)\,dx \big)^{\frac{1}{2}}
  +\big( \frac{r}{\rho} \big)^3\rho A(\rho). 
\end{align*}
Substituting this estimate into the second term of the right hand side of the previous inequality, we conclude that 
\begin{align*}
  &\int_{B_r(0)}\left( |{\bf u}|^3+|\nabla Q|^3 \right)\,dx\\
  &\lesssim \rho^{\frac{3}{4}}\big(\int_{B_r(0)}\left( |\nabla {\bf u}|^2+
  |\nabla^2 Q|^2\right) \,dx\big)^{\frac{3}{4}}\big(\rho^{-1} \int_{B_r(0)}(|{\bf u}|^2+|\nabla Q|^2)(x, t)\,dx \big)^{\frac{3}{4}}\\
  &\quad +r^{-\frac{3}{2}}\big( \int_{B_r(0)}(|{\bf u}|^2+|\nabla Q|^2)(x, t)\,dx \big)^{\frac{3}{2}}\\
  &\lesssim \rho^{\frac{3}{4}}A^{\frac{3}{4}}(\rho)\big( \int_{B_r(0)}(|\nabla {\bf u}|^2+|\nabla^2 Q|^2)(x, t)\,dx \big)^{\frac{3}{4}}\\
  &\quad +r^{-\frac{3}{2}}\big( \int_{B_r(0)}(|{\bf u}|^2+|\nabla Q|^2)(x, t)\,dx \big)^{\frac{3}{2}}\\
  &\lesssim\big( \rho^{\frac{3}{4}}+\frac{\rho^{\frac{9}{4}}}{r^{\frac{3}{2}}} \big)\big( \int_{B_r(0)}(|\nabla {\bf u}|^2+|\nabla^2 Q|^2)\,dx \big)^{\frac{3}{4}}A^{\frac{3}{4}}(\rho)+\big( \frac{r}{\rho} \big)^3 A^{\frac{3}{2}}(\rho). 
\end{align*}
Integrating this inequality over $[-r^2,0]$, by H\"{o}lder's inequality we have
\begin{align*}
  &C(r)=\frac{1}{r^2}\int_{\mathbb{P}_r(0,0)}(|{\bf u}|^3+|\nabla Q|^3)\,dx\\
  &\lesssim \big( \frac{r}{\rho} \big)^3 A^{\frac{3}{2}}(\rho)+\big( \rho^{\frac{3}{4}}+\frac{\rho^{\frac{9}{4}}}{r^{\frac{3}{2}}} \big)\int_{-r^2}^{0}\big( \int_{B_r(0)}(|\nabla {\bf u}|^2+|\nabla^2 Q|^2)\,dx \big)^{\frac{3}{4}}\,dt A^{\frac{3}{4}}(\rho)\\
  &\lesssim \big( \frac{r}{\rho} \big)^3 A^{\frac{3}{2}}(\rho)+r^{-\frac{3}{2}}\rho^{\frac{3}{4}}\big( \rho^{\frac{3}{4}}+\frac{\rho^{\frac{9}{4}}}{r^{\frac{3}{2}}} \big)A^{\frac{3}{4}}(\rho)B^{\frac{3}{4}}(\rho)\\
  &\lesssim\big( \frac{r}{\rho} \big)^3 A^{\frac{3}{2}}(\rho)+\big[\big( \frac{\rho}{r} \big)^{\frac{3}{2}}+\big( \frac{\rho}{r} \big)^3 \big]A^{\frac{3}{4}}(\rho)B^{\frac{3}{4}}(\rho)\\
  &\lesssim \big( \frac{r}{\rho} \big)^3 A^{\frac{3}{2}}(\rho)+\big( \frac{\rho}{r} \big)^3A^{\frac{3}{4}}(\rho)B^{\frac{3}{4}}(\rho).
\end{align*}
This completes the proof of (5.2). \end{proof}

Next we want to estimate the pressure function.
\begin{Lemma}
  Under the same assumption with Lemma \ref{lem:CAB}, it holds for any $0<r\leq\frac{\rho}{2} $
  \begin{equation}
 D(r)\lesssim \frac{r}{\rho}D(\rho)+\big( \frac{\rho}{r} \big)^2A^{\frac{3}{4}}(\rho)B^{\frac{3}{4}}(\rho).
 \label{eqn:DAB}
  \end{equation}
\label{lem:DAB}
\end{Lemma}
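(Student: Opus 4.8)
The plan is to estimate the pressure $D(r)$ by exploiting the Poisson equation $-\Delta P = {\rm div}^2(\u\otimes\u + (\nabla Q\otimes\nabla Q - \frac12|\nabla Q|^2 I_3))$, which holds in $B_\rho(0)$ thanks to Lemma \ref{freed2} (the antisymmetric stress contributes nothing to the right-hand side). First I would fix a cutoff $\eta\in C_0^\infty(B_\rho(0))$ with $\eta\equiv 1$ on $B_{\rho/2}(0)$, and for each fixed $t\in[-\rho^2,0]$ decompose $P = P_1 + P_2$ on $B_{\rho/2}(0)$, where $P_1(\cdot,t)$ is the Newtonian potential
$$P_1(x,t) = \int_{\R^3} \nabla^2 G(x-y)\,\eta(y)\big[\u\otimes\u + (\nabla Q\otimes\nabla Q - \tfrac12|\nabla Q|^2 I_3)\big](y,t)\,dy,$$
with $G$ the fundamental solution of $-\Delta$ in $\R^3$, and $P_2 = P - P_1$, which is harmonic in $B_{\rho/2}(0)$.

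For the singular part $P_1$, the Calderón--Zygmund $L^{3/2}$ estimate gives, for a.e.\ $t$,
$$\|P_1(\cdot,t)\|_{L^{3/2}(\R^3)} \lesssim \|\u(\cdot,t)\|_{L^3(B_\rho)}^2 + \|\nabla Q(\cdot,t)\|_{L^3(B_\rho)}^2.$$
Integrating the $3/2$ power over $t\in[-r^2,0]$ and applying the interior interpolation estimate from the proof of Lemma \ref{lem:CAB} — namely the bound $\int_{B_\rho(0)}(|\u|^3+|\nabla Q|^3)\,dx \lesssim \rho^{3/4}A^{3/4}(\rho)(\int_{B_\rho}(|\nabla\u|^2+|\nabla^2 Q|^2)\,dx)^{3/4} + \rho^{-3/2}(\cdots)^{3/2}$, then Hölder in $t$ — yields $r^{-2}\int_{\mathbb P_r}|P_1|^{3/2}\,dxdt \lesssim (\rho/r)^2 A^{3/4}(\rho)B^{3/4}(\rho)$, which is exactly the second term of \eqref{eqn:DAB}. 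For the harmonic part $P_2$, interior estimates for harmonic functions give, for $0<r\le\rho/2$, the decay $\fint_{B_r(0)}|P_2|^{3/2}\,dx \lesssim \fint_{B_{\rho/4}(0)}|P_2|^{3/2}\,dx$, hence $r^{-2}\int_{\mathbb P_r}|P_2|^{3/2} \lesssim (r/\rho)\,\rho^{-2}\int_{\mathbb P_{\rho/4}}|P_2|^{3/2}$; then writing $P_2 = P - P_1$ and bounding $\rho^{-2}\int_{\mathbb P_{\rho/4}}|P|^{3/2} \le \rho^{-2}\int_{\mathbb P_\rho}|P|^{3/2} = D(\rho)$ and $\rho^{-2}\int_{\mathbb P_{\rho/4}}|P_1|^{3/2} \lesssim A^{3/4}(\rho)B^{3/4}(\rho)$ by the same argument as above gives the first term $\frac{r}{\rho}D(\rho)$ plus a harmless extra copy of the second term. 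Adding the two contributions and absorbing constants proves \eqref{eqn:DAB}.

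The main obstacle here is purely bookkeeping: one must be careful that the Newtonian potential $P_1$ is only defined up to a $t$-dependent additive constant (harmonic functions absorb constants), so the decomposition should really be performed on $P$ minus its spatial average, or equivalently one works with $P_2$ modulo constants and notes that replacing $P_2$ by $P_2 - (P_2)_{B_{\rho/4}}$ only improves the $L^{3/2}$ decay; since $D(r)$ as defined involves $|P|^{3/2}$ rather than the oscillation, a little extra care is needed to pass between the two, but because the claimed estimate \eqref{eqn:DAB} retains the full $\frac{r}{\rho}D(\rho)$ term (not an oscillation of $P$) this is automatically accommodated. A second minor point is keeping track of the exponents in the time integration: since $P_1(\cdot,t)\in L^{3/2}_x$ with norm quadratic in the $L^3_x$ norms of $(\u,\nabla Q)$, one gets an $L^{1}_t$ control of $\|P_1\|_{L^{3/2}_x}^{3/2}$ only after invoking $\u,\nabla Q\in L^3_tL^3_x$, which is where $C(\rho)$-type quantities and then Lemma \ref{lem:CAB}'s interpolation enter; organizing this so that only $A(\rho)$ and $B(\rho)$ appear on the right is the crux, and it mirrors exactly the Navier--Stokes computation of \cite{CKN} and \cite{LIN}.
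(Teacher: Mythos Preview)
Your approach follows the same overall architecture as the paper---decompose $P=P_1+P_2$ with $P_1$ a Newtonian potential and $P_2$ harmonic, then use Calder\'on--Zygmund for $P_1$ and interior harmonic estimates for $P_2$---but there is a genuine gap in your treatment of $P_1$.

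You define $P_1$ from the tensor $\u\otimes\u+(\nabla Q\otimes\nabla Q-\tfrac12|\nabla Q|^2 I_3)$ \emph{without subtracting spatial means}. Calder\'on--Zygmund then gives $\|P_1(\cdot,t)\|_{L^{3/2}}^{3/2}\lesssim \int_{B_\rho}(|\u|^3+|\nabla Q|^3)$, and the interpolation inequality you quote from Lemma~\ref{lem:CAB} has two terms:
\[
\int_{B_\rho}(|\u|^3+|\nabla Q|^3)\lesssim \rho^{3/4}A^{3/4}(\rho)\Big(\int_{B_\rho}(|\nabla\u|^2+|\nabla^2 Q|^2)\Big)^{3/4}+A^{3/2}(\rho).
\]
The second term $A^{3/2}(\rho)$ is the one you have written as ``$\rho^{-3/2}(\cdots)^{3/2}$'' and then silently dropped. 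After integrating over $[-r^2,0]$ and dividing by $r^2$, this term survives as $A^{3/2}(\rho)$, which is \emph{not} dominated by $(\rho/r)^2A^{3/4}(\rho)B^{3/4}(\rho)$ and is not present in the stated inequality \eqref{eqn:DAB}. So as written your argument proves only the weaker estimate $D(r)\lesssim \tfrac{r}{\rho}D(\rho)+(\rho/r)^2A^{3/4}(\rho)B^{3/4}(\rho)+A^{3/2}(\rho)$.

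The paper avoids this by first rewriting the Poisson equation with means subtracted: using $\Dv\,\u=0$ one replaces $\u$ by $\u-(\u)_\rho$, and one expands $\nabla Q\otimes\nabla Q$ about $(\nabla Q)_\rho$, keeping the (nonvanishing) cross terms $(\nabla Q)_\rho\otimes(\nabla Q-(\nabla Q)_\rho)$ explicitly. The Newtonian potential $P_1$ is then built from these mean-zero pieces, so Calder\'on--Zygmund yields $\|P_1\|_{L^{3/2}}^{3/2}\lesssim \int_{B_\rho}(|\u-(\u)_\rho|^3+|\nabla Q-(\nabla Q)_\rho|^3)+|(\nabla Q)_\rho|^{3/2}\int_{B_\rho}|\nabla Q-(\nabla Q)_\rho|^{3/2}$. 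Each of these is controlled, via Sobolev--Poincar\'e for mean-zero functions, purely by $A^{3/4}(\rho)B^{3/4}(\rho)$ with no leftover $A^{3/2}(\rho)$. This mean-subtraction is the missing idea in your sketch; once you insert it, the rest of your outline (harmonic decay of $P_2$, no need to pass to oscillations of $P$) goes through as you describe.
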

\begin{proof}
  From the scaling invariance of all quantities, we only need to consider the case $\rho=1$, $0<r\leq \frac{1}{2}$.   By taking divergence of the equation \eqref{e}$_1$, we obtain
  \begin{eqnarray}  \label{P-equation6}
    &&-\Delta P={\rm{div}}^2\left[ {\bf u}\otimes {\bf u}+\nabla Q\otimes \nabla Q \right]\nonumber\\
   &=&{\rm{div}}^2\left[ ({\bf u}-({\bf u})_1)\otimes({\bf u}-({\bf u})_1)+ \nabla Q\otimes\nabla Q \right]\nonumber\\
   &=&{\rm{div}}^2[({\bf u}-({\bf u})_1)\otimes({\bf u}-({\bf u})_1)+(\nabla Q-(\nabla Q)_1)\otimes(\nabla Q-(\nabla Q)_1)]\nonumber\\
   &&+{\rm{div}}^2[(\nabla Q)_1\otimes (\nabla Q-(\nabla Q)_1)+(\nabla Q-(\nabla Q)_1)\otimes (\nabla Q)_1].
  \end{eqnarray}
Let $\eta\in C_0^\infty(\R^3)$ be a cut off function of $B_{\frac{1}{2}}(0)$ such that 
  \begin{equation}
    \left\{
      \begin{array}{ll}
	\eta=1, &\text{ in }B_{\frac{1}{2}}(0), \\
	\eta=0, & \text{ in }\R^3\setminus B_1(0), \\
	0\leq \eta\leq 1, & |\nabla \eta|\leq 8. 
      \end{array}
      \right.
      \label{}
    \end{equation}
    Define the following auxillary function
    \begin{align*}
      P_1(x, t)&=-\int_{\R^3}\nabla_y^2 G(x-y):\eta^2(y)\big[ ({\bf u}-({\bf u})_1)\otimes ({\bf u}-({\bf u})_1)\\
      &\qquad +(\nabla Q-(\nabla Q)_1)\otimes (\nabla Q-(\nabla Q)_1)+(\nabla Q-(\nabla Q)_1)\otimes(\nabla Q)_1\\
      &\qquad +(\nabla Q)_1\otimes (\nabla Q-(\nabla Q)_1)\big](y, t)\,dy,
    \end{align*}
    Then we have
    $$-\Delta P_1={\rm{div}}^2\left[({\bf u}-({\bf u})_1)\otimes({\bf u}-({\bf u})_1)+\nabla Q\otimes \nabla Q\right] \text{ in }B_{\frac{1}{2}}(0),$$
    and
    $$-\Delta (P-P_1)=0 \text{ in }B_{\frac{1}{2}}(0).$$
    For $P_1$, we apply the Calderon-Zygmund theory to deduce
    \begin{eqnarray}
      \left\|P_1\right\|_{L^{\frac{3}{2}}(\R^3)}^{\frac{3}{2}}&\lesssim& \left\|\eta^2|{\bf u}-({\bf u})_1|^2\right\|_{L^{\frac{3}{2}}(\R^3)}^{\frac{3}{2}}+\left\|\eta^2|\nabla Q-(\nabla Q)_1|^2\right\|_{L^{\frac{3}{2}}(\R^3)}^{\frac{3}{2}}\nonumber\\
      &&+\left\|\eta^2|(\nabla Q)_1||\nabla Q-(\nabla Q)_1|\right\|_{L^{\frac{3}{2}}(\R^3)}^{\frac{3}{2}}\nonumber\\
     &\lesssim& \int_{B_1(0)}(|{\bf u}-({\bf u})_1|^3+|\nabla Q-(\nabla Q)_1|^3)\,dx\nonumber\\
     &&+ |(\nabla Q)_1|^\frac32\int_{B_1(0)}|\nabla Q-(\nabla Q)_1|^\frac32\,dx.
      \label{}
    \end{eqnarray}
Since $P-P_1$ is harmonic in $B_\frac12(0)$, we get
$$\frac{1}{r^2}\left\|P-P_1\right\|_{L^{\frac{3}{2}}(B_r(0))}^{\frac{3}{2}}\lesssim r\left\|P-P_1\right\|_{L^{\frac{3}{2}}(B_1(0))}^{\frac{3}{2}}\lesssim r\big( \left\|P\right\|_{L^{\frac{3}{2}}(B_1(0))}^{\frac{3}{2}}+\left\|P_1\right\|_{L^{\frac{3}{2}}(B_1(0))}^{\frac{3}{2}} \big).$$
Integrating it over $[-r^2, 0]$ and applying (5.8),  we can show that
 \begin{eqnarray*}
 &&\frac{1}{r^2}\int_{\PP_r(0,0)}|P|^{\frac{3}{2}}\,dxdt\\
 &\lesssim&
 r\int_{\PP_1(0,0)}|P|^{\frac{3}{2}}\,dxdt+\frac{1}{r^2}\int_{\PP_1(0,0)}(|{\bf u}-({\bf u})_1|^3+|\nabla Q-(\nabla Q)_1|^3)\,dxdt\\
 &&+ \frac{1}{r^2} \big(\sup_{-1\le t\le 0} |(\nabla Q)_1(t)|\big)^\frac32\int_{\PP_1(0,0)}|\nabla Q-(\nabla Q)_1|^\frac32\,dxdt\\
 &\lesssim&r\int_{\PP_1(0,0)}|P|^{\frac{3}{2}}\,dxdt+\frac{1}{r^2}\int_{\PP_1(0,0)}(|{\bf u}-({\bf u})_1|^3+|\nabla Q-(\nabla Q)_1|^3)\,dxdt\\
 &&+\frac{1}{r^2}A^\frac34(1)\int_{\PP_1(0,0)}|\nabla Q-(\nabla Q)_1|^\frac32\,dxdt.
 \end{eqnarray*}
 This, combined with the interpolation inequality
 \begin{eqnarray*}
  && \int_{\PP_1(0,0)}(|{\bf u}-({\bf u})_1|^3+|\nabla Q-(\nabla Q)_1|^3)\,dxdt\nonumber\\
  &&{\lesssim \sup_{-1\leq t\leq 0}\big( \int_{B_1(0)}(|{\bf u}|^2+|\nabla Q|^2)\,dx \big)^{\frac{3}{4}}
   \times \big( \int_{\PP_1(0,0)}(|\nabla {\bf u}|^2+|\nabla^2 Q|^2 )\,dxdt\big)^{\frac{3}{4}}, }
   \label{}
 \end{eqnarray*}
and H\"older's inequality
$$\int_{\PP_1(0,0)}|\nabla Q-(\nabla Q)_1|^\frac32\,dxdt\lesssim 
\big(\int_{\PP_1(0,0)}|\nabla Q-(\nabla Q)_1|^2\,dxdt\big)^\frac34,$$
implies that
 $$D(r)\lesssim r D(1)+\frac{1}{r^2} A^{\frac{3}{4}}(1)B^{\frac{3}{4}}(1).$$
This, after scaling back to $\rho$, yields \eqref{eqn:DAB}. The proof is now complete.
 \end{proof}
 \begin{proof}[Proof of Theorem \ref{thm:strongregularity}]
 For $\theta\in (0,\frac12)$ and $\rho\in (0,1)$,
 let $\varphi\in C_0^\infty(\PP_{\theta\rho}(0,0))$ be a function such that 
 $$\varphi=1 \ {\rm{in}}\ \PP_{\frac{\theta \rho}2}(0,0), \
 |\nabla \varphi|\lesssim \frac{1}{\theta\rho}, \ |\nabla ^2 \varphi|+|\varphi_t|\lesssim \frac{1}{\theta^2\rho^2}.
 $$
 Applying the local energy inequality in Lemma 2.2, the maximum principles Lemmas \ref{max1} and \ref{max0}, and the integration by parts,
 we obtain that
 {
 \begin{align*}
&\sup_{-(\theta\rho)^2\leq t\leq 0}\int_{\O}(|{\bf u}|^2+|\nabla Q|^2)\varphi^2\,dx
+\int_{\O\times [-(\theta\rho)^2,0]}(|\nabla {\bf u}|^2+|\nabla^2 Q|^2)\varphi^2\,dxdt\\
   &\lesssim\int_{\O\times [-(\theta\rho)^2,0]}(|{\bf u}|^2+|\nabla Q|^2)(|\varphi_t|+|\nabla\varphi|^2+|\nabla^2 \varphi|)\,dxdt\\
   &\quad+\int_{\O\times [-(\theta\rho)^2,0]}[(|{\bf u}|^2-(|{\bf u}|^2)_{\theta\rho})+(|\nabla Q|^2-|\nabla Q|^2)_{\theta\rho})
   +|P|]|{\bf u}||\nabla \varphi|\,dxdt\\
   &\quad+\int_{\O\times [-(\theta\rho)^2,0]}|\nabla Q|^2\varphi^2\,dxdt
  +\int_{\O\times [-(\theta\rho)^2,0]}(|\nabla {\bf u}||\nabla Q|+|{\bf u}||\Delta Q|)|\varphi||\nabla\varphi|\,dxdt.  
\end{align*}   
This, with the help of Young's inequality:
\begin{align*}
&\int_{\O\times [-(\theta\rho)^2,0]}(|\nabla {\bf u}||\nabla Q|+|{\bf u}||\Delta Q|)|\varphi||\nabla\varphi|\,dxdt\\
&\leq \frac12 \int_{\O\times [-(\theta\rho)^2,0]}(|\nabla {\bf u}|^2+|\nabla^2 Q|^2)\varphi^2\,dxdt\\
&\ \ +4\int_{\O\times [-(\theta\rho)^2,0]}(|{\bf u}|^2+|\nabla Q|^2)|\nabla\varphi|^2\,dxdt,
\end{align*}
implies that   
   \begin{align*}
   &A(\frac12\theta\rho)+B(\frac12\theta\rho)\\
   &=\sup_{-(\frac{\theta\rho}2)^2\leq t\leq 0}\frac{2}{\theta\rho}\int_{B_{\frac{\theta\rho}2}(0)}(|{\bf u}|^2+|\nabla Q|^2)\,dx
+\frac{2}{\theta\rho}\int_{\PP_{\frac{\theta\rho}2}(0,0)}(|\nabla {\bf u}|^2+|\nabla^2 Q|^2)\,dxdt\\
   &\lesssim\sup_{-(\theta\rho)^2\leq t\leq 0}\frac{1}{\theta\rho}\int_{\R^3}(|{\bf u}|^2+|\nabla Q|^2)\varphi^2\,dx
+\frac{1}{\theta\rho}\int_{\R^3\times [-(\theta\rho)^2,0]}(|\nabla {\bf u}|^2+|\nabla^2 Q|^2)\varphi^2\,dxdt\\
&\lesssim \frac{1}{\theta\rho}\int_{\R^3\times [-(\theta\rho)^2,0]}(|{\bf u}|^2+|\nabla Q|^2)(|\varphi_t|+|\nabla\varphi|^2+|\nabla^2 \varphi|)\,dxdt\\
   &\ \ \ +\frac{1}{\theta\rho}\int_{\R^3\times [-(\theta\rho)^2,0]}[(|{\bf u}|^2-(|{\bf u}|^2)_{\theta\rho})+
   (|\nabla Q|^2-(|\nabla Q|^2)_{\theta\rho})+|P|]|{\bf u}||\nabla \varphi|\,dxdt\\
   &\ \ \ +\frac{1}{\theta\rho}\int_{\R^3\times [-(\theta\rho)^2,0]}|\nabla Q|^2\varphi^2\,dxdt\\
   &\lesssim\frac{1}{(\theta\rho)^3}\int_{\PP_{\theta\rho}(0,0)}(|{\bf u}|^2+|\nabla Q|^2)\,dxdt
+\frac{1}{(\theta\rho)^2}\int_{\PP_{\theta\rho}(0,0)}|P| |{\bf u}|\,dxdt\\
   &\quad+\frac{1}{(\theta\rho)^2}\int_{\PP_{\theta\rho}(0,0)}\left( ||{\bf u}|^2-(|{\bf u}|^2)_{\theta\rho}|
   +||\nabla Q|^2-(|\nabla Q|^2)_{\theta\rho}| \right)|{\bf u}|\,dxdt\\
  &=I_1+I_2+I_3.
    \end{align*}
  It is not hard to see that
  $$|I_1|\lesssim \big(\frac{1}{(\theta\rho)^2}\int_{\PP_{\theta\rho}(0,0)}(|{\bf u}|^3+|\nabla Q|^3)\,dxdt\big)^\frac23\lesssim C^\frac23(\theta\rho),$$
  $$|I_2|\lesssim \big(\frac{1}{(\theta\rho)^2}\int_{\PP_{\theta\rho}(0,0)}|{\bf u}|^3\,dxdt\big)^\frac13
  \big(\frac{1}{(\theta\rho)^2}\int_{\PP_{\theta\rho}(0,0)}|P|^\frac32\,dxdt\big)^\frac23\lesssim C^\frac13(\theta\rho) D^\frac23(\theta\rho),
  $$
 while, by employing H\"older's and Poincar\'e's inequalities, 
 \begin{eqnarray*}
 |I_3|&\lesssim& \frac{1}{(\theta\rho)^2} \int_{-(\theta\rho)^2}^0\int_{B_{\theta\rho}(0)} (|{\bf u}||\nabla{\bf u}|+|\nabla Q||\nabla^2 Q|)
 \big(\int_{B_{\theta\rho}(0)}|{\bf u}|^3+|\nabla Q|^3\big)^\frac13\,dt\\
 &\lesssim & A^\frac12(\theta\rho)B^\frac12(\theta\rho)C^\frac13(\theta\rho).
 \end{eqnarray*}   
  }
Putting together all the estimates, we have
\begin{align*}
  A(\frac12{\theta \rho})+B(\frac12{\theta\rho})&\lesssim\big[C^{\frac{2}{3}}(\theta\rho)
  +A^{\frac{1}{2}}(\theta\rho)B^{\frac{1}{2}}(\theta\rho)C^{\frac{1}{3}}(\theta\rho)+C^{\frac{1}{3}}(\theta\rho)D^{\frac{2}{3}}(\theta\rho)\big]\\
  &\lesssim \big[C^{\frac{2}{3}}(\theta\rho)
  +A(\theta\rho)B(\theta\rho)+D^{\frac{4}{3}}(\theta\rho)\big]
  \end{align*}
  so that
  \begin{align*}
   A^\frac32(\frac12{\theta \rho})\lesssim\big[C(\theta\rho)
  +A^\frac32(\theta\rho)B^\frac32(\theta\rho)+D^2(\theta\rho)\big].
  \end{align*}
While
  \begin{align*}
  D^2(\theta\rho)&\lesssim \theta^2 \big[D^2(\rho)+\theta^{-6} A^{\frac{3}{2}}(\rho)B^{\frac{3}{2}}(\rho)\big],
  \end{align*}
  and
  \begin{align*}
  C(\theta \rho)&\lesssim \theta^3 A^{\frac{3}{2}}(\rho)+\theta^{-3} A^{\frac{3}{4}}(\rho)B^{\frac{3}{4}}(\rho). 
\end{align*}
Also note that
$$A^\frac32(\theta\rho) B^\frac32(\theta\rho)\le \theta^{-3} A^\frac32(\rho)B^\frac32(\rho).$$
Therefore we conclude that for $0<\theta_0<\frac12$,
\begin{align*}
  &A^\frac32(\frac12{\theta_0 \rho})+D^2(\frac12{\theta_0 \rho})\\
  &\leq c[\theta_0^2D^2(\rho)+(\theta_0^{-3}+\theta_0^{-4})A^\frac32(\rho)B^\frac32(\rho)+\theta_0^3A^\frac32(\rho)+\theta_0^{-3}A^\frac34(\rho) B^\frac34(\rho)]\\
  &\leq c[\theta_0^2(D^2(\rho)+A^\frac32(\rho))+\theta_0^{-8}A^\frac32(\rho)B^\frac32(\rho)+\theta_0^2]\\
  &\leq c(\theta_0^2+\theta_0^{-8} B^\frac32(\rho))(A^\frac32(\rho)+D^2(\rho))+c\theta_0^2.
  \end{align*}
For $\varepsilon_1>0$ given by Theorem 5.1, let $\theta_0\in (0,\frac12)$ such that
$$c\theta_0^2=\min\big\{\frac14, \frac12\varepsilon_1^2\big\}.$$
From \eqref{small_cond1}, we know that
$$\limsup_{\rho\to 0} B(\rho)\le \varepsilon_1^2,$$
hence there exists $\rho_0>0$ such that  
$$c\theta_0^{-8}B^\frac32(\rho)\le \frac14, \ \forall 0<\rho<\rho_0.$$
Therefore we conclude that there exist $\theta_0\in (0,\frac12)$ and $\rho_0>0$ such that
\begin{align*}
 A^\frac32(\frac12{\theta_0 \rho})+D^2(\frac12{\theta_0 \rho})\le \frac12 (A^\frac32(\rho)+D^2(\rho))
 +\frac12\varepsilon_1^2, \ \forall 0<\rho<\rho_0.
 \end{align*} 
 Iterating this inequality yields that
 \begin{align}\label{decay1}
 A^\frac32((\frac12{\theta_0)^k \rho})+D^2((\frac12{\theta_0)^k \rho})\le \frac{1}{2^k} (A^\frac32(\rho)+D^2(\rho))
 +\varepsilon_1^2
 \end{align} 
holds for all $0<\rho<\rho_0$ and $k\ge 1$.

Employing (5.2) and \eqref{decay1}, we obtain that
\begin{align}\label{decay2}
C((\frac12\theta_0)^{k}\rho)&\le c\big[(\frac12\theta_0)^3 A^\frac32((\frac12\theta_0)^{k-1}\rho)
+(\frac12\theta_0)^{-3} A^\frac34((\frac12\theta_0)^{k-1}\rho) B^\frac34((\frac12\theta_0)^{k-1}\rho)\big]\nonumber\\
&\le c\big[(\frac12\theta_0)^3 +(\frac12\theta_0)^{-3} \varepsilon_1^\frac32\big]
\big[\frac{1}{2^{k-1}} (A^\frac32(\rho)+D^2(\rho))
 +\varepsilon_1^2\big]
\end{align}
holds for all $0<\rho<\rho_0$ and $k\ge 1$.

Putting \eqref{decay1} and \eqref{decay2} together, we obtain that
\begin{align}\label{decay3}
\limsup_{k\to\infty} \big[C((\frac12\theta_0)^{k}\rho)+D^2((\frac12\theta_0)^{k}\rho)\big]
&\le c\big[1+(\frac12\theta_0)^3 +(\frac12\theta_0)^{-3} \varepsilon_1^\frac32]\varepsilon_1^2
\le \frac12\varepsilon_0^3,
\end{align}
holds for all $\rho\in (0,\rho_0)$, provided $\varepsilon_1=\varepsilon_1(\theta_0, \varepsilon_0)>0$ is chosen
sufficiently small. Therefore, by Lemma 5.4 
$({\bf u}, Q, P)$ is smooth near $(0,0)$.  This completes the proof.
\end{proof}
Theorem \ref{main} can be proved by the following covering argument. 
Let $\Sigma$ be the singular set of suitable weak solutions $({\bf u}, Q, P)$. 
If $(x,t)\in \Sigma$,  then by the theorem \ref{thm:strongregularity}, 
\begin{equation}
    \limsup_{r\rightarrow0}\frac{1}{r}\int_{\PP_r(x,t)}(|\nabla {\bf u}|^2+|\nabla^2 Q|^2)\,dxdt\geq \varepsilon_1.
\end{equation}
Let $V$ be a neighborhood of $\Sigma$ and $\delta>0$ such that for all $(x, t)\in \Sigma$, we can find $r<\delta$ such that $\PP_r(x, t)\subset V$ and
$$\frac{1}{r}\int_{\PP_r(x, t)}\left( |\nabla {\bf u}|^2+|\nabla^2 Q|^2 \right)\,dxdt\geq \varepsilon_1.$$
By Vitali's covering lemma, $\exists (x_i, t_i)\in V, 0<r_i<\delta$ such that $\left\{ \PP_{r_i}(x_i, t_i) \right\}_{i=1}^\infty$ are pairwise disjoint and 
$$\Sigma\subset \bigcup_{i=1}^\infty \PP_{5r_i}(x_i, t_i).$$
Hence
\begin{align*}
&\mathcal{P}^1_{5\delta}(\Sigma)\le 
  \sum_{i=1}^{\infty}5r_i\leq \frac{5}{\varepsilon_1}\sum_{i=1}^{\infty}\int_{\PP_{r_i}(x_i, t_i)}\left( |\nabla {\bf u}|^2+|\nabla^2 Q|^2 \right)
  \,dxdt\\
  &\leq \frac{5}{\varepsilon_1}\int_{\cup_i \PP_{r_i}(x_i, t_i)}\left( |\nabla {\bf u}|^2+|\nabla^2 Q|^2 \right)\,dxdt\\
  &\leq \frac{5}{\varepsilon_1}\int_{V}\left( |\nabla {\bf u}|^2+|\nabla^2 Q|^2 \right)\,dxdt<\infty. 
\end{align*}
We can conclude that $\Sigma$ is of zero Lesbegue measure. Then we can choose $|V|$ to be arbitrarily small, from the fact that
{$$\int_0^\infty\int_{\O}\left( |\nabla {\bf u}|^2+|\nabla^2 Q|^2 \right)\,dxdt
=\int_{0}^{\infty}\int_{\O}\left( |\nabla {\bf u}|^2+|\Delta Q|^2 \right)
\,dxdt<\infty$$}
and the absolute continuity of integral, we have
$$\lim_{|V|\rightarrow0}\int_{V}\left( |\nabla {\bf u}|^2+|\nabla^2 Q|^2 \right)\,dxdt\rightarrow0.$$
Hence
$$\mathcal{P}^1(\Sigma)=\lim_{\delta\rightarrow0}\mathcal{P}^1_{5\delta}(\Sigma)=0, $$
This completes the proof of Theorem \ref{main}. \qed

\bigskip\bigskip
\noindent{\bf Acknowledgements}.  Both the first and third authors are partially supported by NSF grant 1764417.
The second author is partially supported by the GRF grant (Project No. CityU 11332216). The third author wishes to express his gratitude to Professor Fanghua Lin for helpful discussions related to the blowing up argument in this paper.

\bigskip

\end{document}